\numberwithin{equation}{section}
\theoremstyle{plain}
\newtheorem{thm}{Theorem}[section]
\newtheorem{prp}[thm]{Proposition}
\newtheorem{cor}[thm]{Corollary}
\newtheorem{lem}[thm]{Lemma}
\newtheorem{que}[thm]{Question}
\newtheorem{question}[thm]{Question}
\newtheorem{proposition}[thm]{Proposition}
\theoremstyle{remark}
\newtheorem{remark}[thm]{Remark}
\newtheorem{definition}[thm]{Definition}
\numberwithin{equation}{section}
\newcommand{\lin}{\operatorname{span}}
\newcommand{\wn}{\operatorname{Int}}
\begin{document}

\title[A \textsf{CDH} topological vector space is a Baire space]{A countable dense homogeneous topological vector space is a Baire space}
\author{Tadeusz Dobrowolski}

\address{
Department of Mathematics\\ Pittsburg State University\\
Pittsburg, KS 66762\\ USA} \email{tdobrowolski@pittstate.edu}

\author{Miko\l aj Krupski}

\address{Institute of Mathematics\\
University of Warsaw\\ Banacha 2\newline 02--097 Warszawa\\
Poland}
\email{mkrupski@mimuw.edu.pl}

\author{Witold Marciszewski}

\address{Institute of Mathematics\\
University of Warsaw\\ Banacha 2\newline 02--097 Warszawa\\
Poland} \email{wmarcisz@mimuw.edu.pl}

\date{\today}
\subjclass[2010]{Primary: {54C35}, {54E52}, {46A03}, Secondary: {22A05}} \keywords{Function space; pointwise convergence topology; $C_p(X)$ space; countable dense homogeneity;
Baire space; topological vector space; the property (B)}

\begin{abstract} We prove that every homogeneous countable dense homogeneous topological space containing a copy of the Cantor set is a Baire space.
In particular, every countable dense homogeneous topological vector space is a Baire space.
It follows that, for any nondiscrete metrizable space $X$, the function space $C_p(X)$ is not countable dense homogeneous. This answers a question
posed recently by R.\ Hern\'andez-Guti\'errez.
We also conclude that, for any infinite dimensional Banach space $E$ (dual Banach space $E^\ast$), the space $E$ equipped with the weak topology
($E^\ast$ with the weak$^\ast$ topology) is not countable dense homogeneous.  We generalize some results of Hru\v{s}\'ak, Zamora Avil\'es, and Hern\'andez-Guti\'errez concerning countable dense homogeneous products.
\end{abstract}

\maketitle

\section{Introduction}
In this note we only consider Tikhonov spaces.

 A space $X$ is \emph{countable dense
homogeneous} (\textsf{CDH}) if $X$ is separable and given countable
dense subsets $D,D' \subseteq X$ , there is a homeomorphism $h: X\to
X$ such that $h[D] = D'$. This is a classical notion going back to
works of Cantor, Fr\'echet and Brouwer. Among Polish spaces,
canonical examples of \textsf{CDH} spaces include the Cantor set,
Hilbert cube, space of irrationals, and all separable complete
metric linear spaces (m.l.s.) and manifolds modeled on them. In
contrast, every Borel but not closed vector subspace of a complete m.l.s. is
not \textsf{CDH}.
In recent years many efforts have been put in constructing examples of \textsf{CDH} non-Polish spaces (see the excellent survey articles
\cite[Section 14]{AvM} and
\cite{HvM}). With no extra algebraic structure, there even
exist \textsf{CDH} metrizable spaces that are meager (this notion is recalled below), e.g., see \cite{FZ} and \cite{FHR}.
An easy example of a non-metrizable \textsf{CDH} space is the Sorgenfrey line.

The main purpose of this work is examining the link between
(\textsf{CDH}) and Baireness (and also hereditary
Baireness) of a space, mainly, of a topological vector space. Recall
that a space $X$ is a \textit{Baire space} if the Baire Category Theorem
holds for $X$; the latter means that every sequence $(U_n)$ of dense
open subsets of $X$ has a dense intersection in $X$. If every closed
subset of $X$ is a Baire space then we call $X$ a \textit{hereditarily Baire
space}. A space is \textit{meager} if it can be written as a countable union of closed sets with empty interior.
Clearly, if $X$ is a Baire space, then $X$ is not meager. The reverse implication is in general not true. However, it holds for every homogeneous space $X$
(see \cite[Theorem 2.3]{LM}).
The idea of studying the relationship between \textsf{CDH} and (hereditarily) Baireness is not new and goes back to Fitzpatrick and Zhou's paper \cite{FZ}.
They showed, among other things, that if a homogeneous \textsf{CDH} metric space $X$ contains a countable set which is not $G_\delta$, then $X$
must be a Baire space.
Later the topic was studied by Hru\v{s}\'ak and Zamora Avil\'es \cite{HZA} who proved that that every
analytic metric \textsf{CDH} space must be a hereditarily Baire space.
Our paper deals mainly with topological vector spaces.

Function spaces $C_p(X)$ provide a wide class of topological vector
spaces to be investigated for \textsf{CDH}. By $C_p(X)$ we denote
the space of all continuous real-valued functions on a space $X$,
endowed with the pointwise topology.
V.\ Tkachuk has asked if there
exists a nondiscrete space $X$ such that $C_p(X)$ is \textsf{CDH}.
(For the case of discrete $X$ see below). Last year, R.\
Hern\'andez-Guti\'errez \cite{HG} gave the first consistent example
of such a space $X$. He has asked whether a metrizable
space $X$ must be discrete, provided $C_p(X)$ is \textsf{CDH} \cite[Question 2.6]{HG}. Our work was inspired by his paper;
in particular, we answer his question in the affirmative. Actually,
combining our Theorem \ref{thm_inf_bounded} with earlier results, we prove that, for a
metrizable space $X$, the space $C_p(X)$ is \textsf{CDH} if and only if $X$ is
discrete of cardinality less than the pseudointersection number
$\mathfrak{p}$.

In order to describe the example of Hern\'andez-Guti\'errez we need
to recall some definitions and older results. Given a filter $F$ on
an infinite countable set $T$, we regard $F$ as a subspace of $2^T$,
a topological copy of the Cantor set. We consider only free filters
on $T$, i.e., filters containing all cofinite subsets of $T$. By
$N_F$ we denote the space $T\cup\{\infty\}$, where $\infty\not\in
T$, equipped with the following topology: All points of $T$ are
isolated and the family $\{A\cup\{\infty\}: A\in F\}$ is a
neighborhood base at $\infty$. Recall that $F$ is a $P$-filter if
for every sequence $(U_n)$ of elements of $F$ there exists an $A\in
F$ which is {\it almost contained} in every $U_n$, i.e.\ $A\setminus
U_n$ is finite for every $n$. An ultrafilter (that is, a maximal
filter) that is a $P$-filter is also called a $P$-point.

Hereditarily Baire spaces $C_p(N_F)$ have been characterized in
terms of $F$ by the third named author (see, \cite[Thm.\ 1.2]{Ma}).

\begin{thm}
Let $F$ be a filter on $\omega$. The following are equivalent:
\begin{enumerate}[(a)]
\item  $F$ is a nonmeager $P$-filter;
\item $F\subset2^\omega$ is a hereditarily Baire space;
\item  $C_p(N_F)$ is a hereditary Baire space.
\end{enumerate}
\end{thm}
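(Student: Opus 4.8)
The plan is to prove the cycle $(a)\Rightarrow(c)\Rightarrow(b)\Rightarrow(a)$, concentrating the function-space content in the single hard step $(a)\Rightarrow(c)$ and obtaining the purely combinatorial equivalence $(a)\Leftrightarrow(b)$ as a by-product. I would begin by disposing of $(c)\Rightarrow(b)$, exhibiting $F$ as a \emph{closed} subspace of $C_p(N_F)$. Sending $A\in F$ to its indicator $\chi_A$ (which is forced to take the value $1$ at $\infty$, since $\lim_F\chi_A=1$) maps $F$ homeomorphically onto $\{f\in C_p(N_F): f[\omega]\subseteq\{0,1\},\ f(\infty)=1\}$: a two-valued $f$ with $f(\infty)=1$ lies in $C_p(N_F)$ precisely when $f^{-1}(1)\in F$, and on such functions the pointwise topology coincides with the topology of $2^\omega$ on the corresponding sets. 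Since this image is the intersection of $C_p(N_F)$ with two pointwise-closed conditions it is closed, and as closed subspaces of hereditarily Baire spaces are hereditarily Baire, $(c)\Rightarrow(b)$ is immediate.

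Next I would prove $(b)\Rightarrow(a)$ contrapositively, in two combinatorial cases. If $F$ is meager, then by the Jalali-Naini--Talagrand criterion it is feeble: there is a partition of $\omega$ into finite intervals $(I_n)$ with $F\subseteq\mathcal{E}:=\{A: A\cap I_n\neq\emptyset\text{ for all but finitely many }n\}$. Writing $\mathcal{E}=\bigcup_m\mathcal{E}_m$ with $\mathcal{E}_m=\{A: A\cap I_n\neq\emptyset\ \forall n\ge m\}$, each $F\cap\mathcal{E}_m$ is closed and nowhere dense in $F$---given $A\in F$ and a basic neighborhood fixing the coordinates below some $N$, deleting one interval $I_n\subseteq[N,\infty)$ with $n\ge m$ keeps the set in $F$ yet moves it out of $\mathcal{E}_m$---so $F$ is meager in itself and not Baire. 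If instead $F$ is not a $P$-filter, I fix a decreasing sequence $(U_n)$ in $F$ with no pseudo-intersection in $F$; the standard construction then yields inside $F$ a closed copy of $\mathbb{Q}$, namely a countable crowded set of finite modifications of the $U_n$ whose only possible limit points in $F$ would be pseudo-intersections, which do not exist. In either case $F$ is not hereditarily Baire.

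The heart of the argument is $(a)\Rightarrow(c)$. Assuming $F$ is a nonmeager $P$-filter, I would show that every closed $H\subseteq C_p(N_F)$ is Baire by a direct fusion: from a putative cover $H=\bigcup_n H_n$ by closed nowhere-dense sets I build $f\in H\setminus\bigcup_n H_n$ as the limit of a decreasing chain of basic neighborhoods $V_0\supseteq V_1\supseteq\cdots$ in $H$ with $V_n\cap H_n=\emptyset$. A basic neighborhood restricts only finitely many coordinates in $\omega$ together with the value at $\infty$, and the crux is that the coordinatewise limit of such approximations need not remain in $C_p(N_F)$: one must ensure that the constructed $f$ still $F$-converges to its value at $\infty$. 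This is exactly where both hypotheses enter. Nonmeagerness (through Talagrand's interval-partition criterion) guarantees that at each stage the oscillation at $\infty$ can be forced down on an $F$-positive set while still dodging $H_n$, so the escape route is never blocked on a filter-large set; the $P$-filter property then supplies a single $A\in F$ almost contained in all the successive large sets produced along the construction, and this $A$ witnesses that the diagonal limit really converges along $F$, i.e.\ that $f\in C_p(N_F)$.

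I expect this fusion to be the main obstacle, for two reasons. The spaces in play are \emph{not} analytic---a nonmeager filter never has the Baire property, hence neither it nor $C_p(N_F)$ is analytic---so the convenient dichotomy ``not hereditarily Baire $\Rightarrow$ contains a closed copy of $\mathbb{Q}$'' is unavailable and the construction must be carried out by hand. Moreover, the continuity requirement at $\infty$ couples the diagonalization across infinitely many stages, so the pseudo-intersection furnished by the $P$-property has to be interleaved delicately with the nowhere-density used to avoid the $H_n$. Once $(a)\Rightarrow(c)$ is established the cycle closes, and the full equivalence, including the filter statement $(a)\Leftrightarrow(b)$, follows.
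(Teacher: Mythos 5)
Note first that the paper contains no proof of this theorem: it is quoted from the third author's earlier work (cited as [Ma, Thm.\ 1.2], i.e.\ \cite{Ma}), with conditions (d) and (e) added later in \cite{HGH}, \cite{KMZ} and \cite{HG}, so your attempt can only be measured against that source. Your easy steps are sound: the closed embedding of $F$ into $C_p(N_F)$ for (c)$\Rightarrow$(b) is correct (the paper itself uses the analogous canonical copy $\{f\in C_p(N_F): f(\infty)=0\}\cap 2^{N_F}$ in Section 5), and the meager half of (b)$\Rightarrow$(a) via the Jalali-Naini--Talagrand criterion is correct, with freeness of $F$ justifying the deletion of an interval $I_n$. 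The genuine gap is (a)$\Rightarrow$(c), which you rightly call the heart but for which you offer a statement of intent rather than a proof. The sentence ``nonmeagerness guarantees that at each stage the oscillation at $\infty$ can be forced down on an $F$-positive set while still dodging $H_n$'' \emph{is} the content of the theorem: nothing in your sketch explains how a basic neighborhood inside an arbitrary closed $H\subseteq C_p(N_F)$, which constrains only finitely many coordinates, can be shrunk into the complement of $H_n$ while simultaneously committing to filter-large control of the tail --- the trace of $H$ on the unconstrained coordinates is an arbitrary closed set, and continuity at $\infty$ is a global condition invisible to any finite fusion step. The actual proof in \cite{Ma} is organized quite differently: it establishes hereditary Baireness of nonmeager $P$-filters and of their countable products regarded as filters on $\omega\times\omega$ (cf.\ [Ma, Corollary 2.4] and Shelah's Fact 4.3 in \cite{Sh}, both of which the present paper invokes), and transfers this to $C_p(N_F)$ through the homeomorphism $C_p(N_F)\cong c_F$ of \cite{Ma1}. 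Your single-space fusion would in effect have to reprove all of this, and as written it cannot be checked.

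There is also a concrete error in the non-$P$ half of (b)$\Rightarrow$(a). Take $Q$ to be, as you propose, the finite modifications $U_n\cup s$ of a decreasing sequence $(U_n)$ in $F$ with no pseudointersection in $F$. We may assume some $\omega\setminus U_n$ is infinite (otherwise $U_0$ itself is a pseudointersection lying in $F$). Pick an infinite $S\subseteq\omega\setminus U_n$; then $U_n\cup (S\cap[0,k))\to U_n\cup S$ as $k\to\infty$, and the limit lies in $F$ (it contains $U_n$), lies outside $Q$, and is \emph{not} a pseudointersection, since $(U_n\cup S)\setminus U_m\supseteq S$ for all $m\ge n$. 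So your claim that ``the only possible limit points in $F$ would be pseudo-intersections'' is false for this $Q$, and $Q$ is not closed in $F$. The lemma itself is true, but the known constructions demand tighter control: e.g., thin $(U_n)$ so that each gap $U_i\setminus U_{i+1}$ is infinite and admit at most one new point per gap, which forces any limit point along unbounded indices to satisfy $A\setminus U_m$ finite for every $m$; but then crowdedness must be rechecked, because re-fixing an initial segment when passing from $U_n\cup s$ to a modification of $U_m$, $m>n$, can dump many points into a single gap. This bookkeeping is the substance of the step and has to be carried out, not dismissed as ``the standard construction''.
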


Hern\'andez-Guti\'errez, Hru\v{s}\'ak \cite{HGH}; and Kunen, Medini,
Zdomskyy \cite{KMZ} added the following equivalent condition to this
list {\em
\begin{enumerate}[(d)]
\item  $F\subset2^\omega$ is \textsf{CDH}.
\end{enumerate}}
Hern\'andez-Guti\'errez \cite{HG} proved that the above condition is
equivalent to {\em
\begin{enumerate}[(e)]
\item $C_p(N_F)$ is \textsf{CDH}.
\end{enumerate}}

Recall that it is unknown whether nonmeager $P$-filters exist in
ZFC. In particular, the following question remains open.

\begin{question}
Does there exist in ZFC a \textsf{CDH} topological vector space which is not Polish?
\end{question}

The equivalence of conditions (a)--(e) above inspired us to
investigate the relationship between the \textsf{CDH} and
(hereditarily) Baire properties of topological vector spaces $E$. We
prove that, if such a space $E$ is \textsf{CDH} then it is a Baire
space, cf. Corollary \ref{CDH_implies_Baire_tvs}. In the last section, we
provide examples of hereditarily Baire separable pre-Hilbert spaces
which are not \textsf{CDH}. Consistently, there exist \textsf{CDH}
topological vector spaces which are not hereditarily Baire, cf.\ the
comment after Theorem \ref{characterization_metr}.

Our techniques apply to Cartesian products. We show that for any separable space $X$ and any infinite cardinal number $\kappa$, if the product $X^\kappa$ is
\textsf{CDH}, then $X$ must be a Baire space, cf. Theorem \ref{product_Baire}. This generalizes a theorem of Hru\v{s}\'ak and Zamora Avil\'es
\cite[Theorem 3.1]{HZA}, who proved the same result for the countable power of a separable metric space $X$. We also show that
if the product $X\times Y$ of two crowded spaces $X$ and $Y$ is \textsf{CDH}, then $X$ contains a copy of the Cantor set if and only if $Y$ contains a copy of the
Cantor set, cf. Theorem \ref{product_Cantor_set}. This generalizes a theorem of Hern\'andez-Guti\'errez \cite[Theorem 2.3]{HG1}, who obtained the same result
under
additional assumption that both spaces $X$ and $Y$ have countable $\pi$-weight.

\section{\textsf{CDH} and property (B) of topological vector spaces}
Some spaces that we discuss are not \textsf{CDH} because they contain nonhomeomorphic countable dense subspaces.
Therefore, it is reasonable to present the following term: A space
$X$ is \emph{uniquely separable} (\textsf{US}) if $X$ is separable
and any two countable dense subspaces of $X$ are homeomorphic.\footnote{This property was considered by Arhangel'skii and van Mill in \cite{AvM1} under the name of
$c_1$-space.}
Obviously, for any $X$, \textsf{CDH} implies \textsf{US}.

Recall that a space $Y$ is referred to as {\it crowded} if every
neighborhood of any point contains at least two distinct points. We
will use the following stronger notion: The space $Y$ is
\textit{sequentially crowded} if every point $y\in Y$ is the limit
of a sequence of points of $Y$ distinct from $y$.

The classic characterization of the space of rationals $\mathbb{Q}$
yields

\begin{remark}
Every crowded separable metrizable space is \textsf{US}.
\end{remark}

It turns out that for certain topological vector spaces the lack of
having \textsf{CDH} is manifested drastically by not even being
\textsf{US}. That happens for spaces with property (B).

\begin{definition}
A space $X$ has the \textit{property (B)} provided $X$ can be
covered by countably many closed nowhere-dense sets
$\{A_n:n\in\omega\}$ such that for any compact set $K\subseteq X$
there exists $n\in \omega$ with $K\subseteq A_n$.
\end{definition}

This notion was introduced in \cite{KM} and later studied by Tkachuk
\cite{T1} and \cite{T2} under the name of \textit{Banakh property}.
Here is the main result of this section. It states, in particular, that no
topological vector space can simultaneously have the property (B)
and be \textsf{US}.

\begin{thm}\label{US_no_B} If a topological group $G$ has the property (B) and contains a nontrivial convergent sequence, then $G$ is
not \textsf{US} (hence, is not \textsf{CDH}).
In particular, if a topological vector space $E$ has the property (B), then $E$ is
not \textsf{US}.
\end{thm}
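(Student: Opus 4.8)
The plan is to show directly that $G$ fails \textsf{US} by exhibiting two countable dense subspaces that are not homeomorphic: one that contains a nontrivial convergent sequence and one that contains none. Since \textsf{CDH} implies \textsf{US}, this also yields the failure of \textsf{CDH}. I may assume $G$ is separable, for otherwise $G$ is not \textsf{US} by definition. Let $\{A_n : n\in\omega\}$ witness property (B); replacing $A_n$ by $A_0\cup\dots\cup A_n$ I may assume the sequence is increasing, so that the open sets $O_n := G\setminus A_n$ are dense, decreasing, and satisfy $\bigcap_n O_n = G\setminus\bigcup_n A_n = \emptyset$. Note that the presence of a nontrivial convergent sequence together with the homogeneity of the topological group $G$ makes $G$ crowded; consequently every dense subspace of $G$ is crowded as well (a relatively discrete point of a dense set would be isolated in $G$), so crowdedness alone will not distinguish the two sets and a finer invariant is needed.

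The easy half is the set containing a convergent sequence. Fix a nontrivial convergent sequence in $G$; passing to a subsequence I may assume its terms $t_k$ are pairwise distinct and converge to a point $t\neq t_k$. Choosing any countable dense $S\subseteq G$ and putting $D' := S\cup\{t\}\cup\{t_k : k\in\omega\}$, I obtain a countable dense subspace in which $t_k\to t$ is a genuine nontrivial convergent sequence. Since ``containing a nontrivial convergent sequence'' is preserved by homeomorphisms, it will suffice to produce a second countable dense subspace that has no such sequence.

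The heart of the argument is the construction of a countable dense set $D_1$ with $D_1\cap A_n$ finite for every $n$. The point of this requirement is the following consequence of property (B): if $(x_i)$ were a sequence of distinct points of $D_1$ converging to some $x\in D_1$, then $K:=\{x\}\cup\{x_i:i\in\omega\}$ would be a compact subset of $G$, hence $K\subseteq A_n$ for some $n$ by property (B), forcing $D_1\cap A_n$ to be infinite, a contradiction. Thus such a $D_1$ has no nontrivial convergent sequence, and therefore $D_1$ is not homeomorphic to $D'$, which proves that $G$ is not \textsf{US}. To obtain the finiteness I would choose the $i$-th point of $D_1$ inside $O_i$: since the $O_i$ are decreasing, a point placed in $O_i$ lies in $A_n$ only when $n>i$, so each $A_n$ receives at most the finitely many points indexed by $i<n$, giving $D_1\cap A_n$ finite automatically.

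The main obstacle is ensuring that the set $D_1$ assembled in this way is \emph{dense}. When $G$ is metrizable (equivalently, here, first countable) this is routine: fixing a countable $\pi$-base $\{B_j\}$ and using that each dense open $O_i$ meets every $B_j$, one interleaves the choices so that $D_1$ meets every $B_j$ while keeping the $i$-th chosen point in $O_i$. In the general topological-group setting one must instead arrange that every point $s$ of a fixed countable dense $S$ lies in $\overline{D_1}$, i.e. that for each $s$ there are points of $D_1\cap O_i$ with $i\to\infty$ clustering at $s$; this is exactly where the convergent-sequence hypothesis and the internal structure of $G$ have to be exploited, and I expect this density bookkeeping to be the only delicate step, everything else being forced by property (B). Finally, the ``in particular'' statement is immediate: a topological vector space $E$ is an abelian topological group, and if $E$ carries property (B) then $E\neq\{0\}$, so for any $v\neq 0$ the sequence $v/n\to 0$ is a nontrivial convergent sequence; the group case then gives that $E$ is not \textsf{US}.
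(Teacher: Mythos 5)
Your overall skeleton is sound: distinguishing two countable dense subspaces by the presence or absence of nontrivial convergent sequences does prove failure of \textsf{US}, the set $D'$ is fine, the indexing trick $d_i\in O_i$ correctly forces $D_1\cap A_n\subseteq\{d_i: i<n\}$, and the compactness argument (a convergent sequence in $D_1$ would be a compact set, hence inside some $A_n$, making $D_1\cap A_n$ infinite) is exactly right. But there is a genuine gap, and it sits exactly where you flagged it: you never prove that a \emph{dense} $D_1$ with finite traces on all $A_n$ exists beyond the countable-$\pi$-base case, and that case carries no content. Indeed, if $G$ were metrizable, then $G$ (separable, homogeneous, with a convergent sequence, hence crowded) would be \textsf{US} by the Sierpi\'nski characterization of $\mathbb{Q}$ (Remark 2.2 of the paper), so the hypotheses of the theorem are simply inconsistent with metrizability; your own construction confirms this, since your $D_1$ would be a countable crowded metrizable space with no nontrivial convergent sequence, i.e.\ a copy of $\mathbb{Q}$ without convergent sequences, which is absurd. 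So the only case you complete is vacuous, and the entire theorem lives in the non-metrizable case, where the existence of such an ``escaping'' dense set is a serious problem, not bookkeeping: for instance, for $C_p(X)$ the paper needs Proposition \ref{dense_gdelta}, which exploits the linear structure via specially built functions $g_n$, to produce a dense set with finite traces on the sets $B_k$, and there is no indication that a single convergent sequence suffices to do this in an arbitrary separable topological group with property (B).

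The paper's proof (Lemmas \ref{seq_crowded} and \ref{lemma_B_not_US}) sidesteps precisely this difficulty by arguing by contradiction \emph{from} \textsf{US}: once one sequentially crowded countable dense set exists (for a group, the subgroup generated by a countable dense set containing the convergent sequence), \textsf{US} makes \emph{every} countable dense set sequentially crowded. This supplies, for each $n$, a sequence $(x^k_n)_k\subseteq X\setminus A_n$ converging to $d_n$ (here one only needs a dense set avoiding the single set $A_n$, which is trivial since $X\setminus A_n$ is dense open), and then a second application of sequential crowdedness extracts one convergent sequence $C$ inside $\{d_0\}\cup\bigcup_n\{x^k_n:k\in\omega\}$ that meets infinitely many levels, i.e.\ a \emph{compact} set contained in no $A_n$ --- contradicting property (B) directly. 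In other words, the paper violates (B) with one compact set built using \textsf{US} as a tool, whereas you are trying to build, without any \textsf{US}-type hypothesis, the much stronger object of a dense set with finite traces on all $A_n$; that stronger, absolute construction is exactly the step your proposal leaves unproved, so as it stands the argument is incomplete.
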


This theorem follows immediately from the next two lemmas and the trivial observation that a topological vector space $E$ with the property (B) must contain a
nonzero vector $x$, hence contains a nontrivial convergent sequence $(x/n)_{n\in\omega}$.

\begin{lem}\label{seq_crowded}
Any separable homogeneous space $X$ with a
nontrivial convergent sequence contains a sequentially crowded
countable dense subset. Furthermore, if, in addition $X$ is a topological group, such a set can be taken as a subgroup of the group.
\end{lem}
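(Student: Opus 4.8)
The plan is to turn a single nontrivial convergent sequence into one based at \emph{every} point, and then to saturate a countable dense set until it becomes self-sequentially-crowded. The first step is a transport principle coming from homogeneity: if $(a_n)\to a$ with $a_n\ne a$ for all $n$, then for any $p\in X$ I choose a homeomorphism $h\colon X\to X$ with $h(a)=p$; by continuity $h(a_n)\to p$, and by injectivity $h(a_n)\ne p$. Hence every point of $X$ is the limit of a sequence of points of $X$ distinct from it.

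Next I build the required set by a countable recursion. Using separability, start from any countable dense $D_0\subseteq X$. Given a countable $D_n$, for each $d\in D_n$ fix (by the transport principle) a sequence $(s^d_k)_k$ of points of $X\setminus\{d\}$ converging to $d$, and put $D_{n+1}=D_n\cup\bigcup_{d\in D_n}\{s^d_k:k\in\omega\}$. Each $D_{n+1}$ is again countable, so $D=\bigcup_n D_n$ is a countable set containing $D_0$, and is therefore dense because $\overline{D}\supseteq\overline{D_0}=X$. Moreover any $d\in D$ lies in some $D_n$, and then $(s^d_k)_k\subseteq D_{n+1}\subseteq D$ witnesses that $d$ is a limit of points of $D\setminus\{d\}$; thus $D$ is sequentially crowded, proving the first assertion.

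For the group case the recursion collapses into a single algebraic step. After a translation I may assume the given nontrivial convergent sequence $(b_n)$ satisfies $b_n\to e$ and $b_n\ne e$, where $e$ is the identity (replace $a_n$ by $a_na^{-1}$, using continuity of right translation). Choose a countable dense $D_0$ and let $D=\langle D_0\cup\{b_n:n\in\omega\}\rangle$ be the subgroup it generates. Then $D$ is countable (the subgroup generated by a countable set is countable) and dense (it contains $D_0$), and it is automatically sequentially crowded: for each $d\in D$, left translation by $d$ is continuous, so $db_n\to d$, while $db_n\ne d$ and $db_n\in D$ since $D$ is a subgroup containing every $b_n$.

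The argument is short, and its only genuine content is the transport principle of the first step. The one thing that must be handled with care is that the witnessing sequences stay \emph{inside} the constructed set: in the general case this is exactly what forces the one-step saturation passing from $D_n$ to $D_{n+1}$, and in the group case it is what is secured for free by closing off under the group operation and including the null sequence $(b_n)$ among the generators.
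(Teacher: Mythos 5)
Your proof is correct and takes essentially the same route as the paper's: homogeneity transports the given nontrivial convergent sequence to every point, the saturation recursion $D_0\subset D_1\subset\cdots$ produces the sequentially crowded countable dense set, and in the group case one translates the sequence to the identity and takes the countable subgroup generated by a countable dense set together with the terms of that sequence. The only differences are cosmetic: you spell out details the paper leaves implicit, such as injectivity of the transporting homeomorphism and cancellation ($db_n\ne d$) in the group.
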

\begin{proof}
First, we will present a proof for the case of a topological group.
Let $(x_n)$ be a nontrivial convergent sequence in $X$. We may
assume that $\lim x_n$ is the neutral element of $X$. Take any countable dense subset $Y$ of $X$. Without loss of generality, we may further assume that all $x_n$'s are in $Y$. Then $D$, the smallest
subgroup of $X$ that contains $Y$, has the required properties.

Now, assume that $X$ is a separable homogeneous space with a nontrivial convergent sequence. By homogeneity, each point $x\in X$ is the limit
of a sequence of points of $X$ distinct from $x$. Therefore, for each countable subset $A$ of $X$,
there is a countable set $B$ containing $A$, such that each point $x\in A$ is the limit of a sequence of points of $B$ distinct from $x$.
Take any countable dense subset $D_0$ of $X$. Inductively, choose countable sets $D_0\subset D_1\subset D_2\subset\dots$, such that,
for any $n\in\omega$ and $x\in D_n$, there is a  sequence of points of $D_{n+1}$, distinct from $x$, and, converging to $x$.
One can easily verify that the union $\bigcup_{n\in \omega} D_n$ is the required dense subset.
\end{proof}

\begin{lem}\label{lemma_B_not_US}
Let $X$ be a space with a sequentially crowded
countable dense subset. If $X$ has the property (B), then it is not \textsf{US}.
\end{lem}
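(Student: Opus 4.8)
The plan is to violate \textsf{US} by exhibiting two countable dense subsets of $X$ that are not homeomorphic. The hypothesis already hands me one of them: a sequentially crowded countable dense set $D$, in which every point is the limit of a sequence of distinct points. Since a homeomorphism carries a convergent sequence of distinct points to another such sequence, being sequentially crowded is a topological invariant; hence it suffices to produce a second countable dense set $E$ that is \emph{not} sequentially crowded, i.e.\ one containing a point $p$ which is not the limit of any sequence of distinct points of $E$. Then $D\not\cong E$ and $X$ fails \textsf{US}.

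First I would isolate the mechanism by which property (B) blocks convergence. Fix closed nowhere dense sets $A_n$ witnessing property (B); replacing $A_n$ by $A_0\cup\dots\cup A_n$ I may assume $A_0\subseteq A_1\subseteq\cdots$. If a sequence of distinct points converges to $p$, then that sequence together with $p$ is compact, hence lies in a single $A_n$. Therefore, to guarantee that no sequence from $E\setminus\{p\}$ converges to $p$ it is enough to arrange, for every $n$, that $p\notin\overline{(E\setminus\{p\})\cap A_n}$: a hypothetical sequence of distinct points of $E$ converging to $p$ would have infinitely many terms in $E\setminus\{p\}$, these would eventually lie in some $A_n$, and they would witness $p\in\overline{(E\setminus\{p\})\cap A_n}$.

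The substance of the proof is then the construction of $E$: I must keep $E$ dense while steering the trace $(E\setminus\{p\})\cap A_n$ away from $p$ for every $n$. Here property (B) is used a second time, to see that $X$ has room for a non-sequential closure point. First, $X$ is crowded, since an isolated point would belong to $D$ and be isolated in $D$, contradicting sequential crowdedness. Next, $X$ is nowhere first countable: if some point $q$ had a decreasing neighbourhood base $(U_k)$, then, $q$ being non-isolated and each $A_k$ nowhere dense, I could pick $x_k\in U_k\setminus(A_k\cup\{q\})$; the sequence $x_k\to q$ would give a compact set $\{x_k\}\cup\{q\}$ which, since $x_n\notin A_n$ for every $n$, is contained in no $A_n$, contradicting property (B). This nowhere first countability is exactly what permits a point $p$ to sit in the closure of a countable set without being the sequential limit of any sequence from it. Using the given set $D$ to supply density and the open dense sets $X\setminus A_n$ to push the approximating points into ever higher levels of the filtration, I would build $E$ by a diagonal recursion so that every neighbourhood of every point of $D$ meets $E$ (ensuring $\overline E\supseteq\overline D=X$), while near $p$ the points of $E$ that fall in $A_n$ are kept out of a fixed neighbourhood of $p$.

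I expect the construction in the previous paragraph to be the main obstacle. The delicate point is precisely that density is not a sequential notion in a nowhere first countable space: I cannot realize $p\in\overline E$ by sending a sequence of $E$ to $p$, for that would reintroduce the forbidden convergent sequence and, by compactness, force infinitely many points of $E$ into a single $A_n$ accumulating at $p$. Thus the recursion must secure density through the (generally uncountable) neighbourhood filters of the points of $D$ while simultaneously respecting the level constraints imposed by the $A_n$ near $p$. Once such an $E$ is produced, the argument closes at once: $E$ is a countable dense subset that is not sequentially crowded, whereas $D$ is sequentially crowded, so $D$ and $E$ are non-homeomorphic and $X$ is not \textsf{US}.
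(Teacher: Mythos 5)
Your preliminary reductions are all correct: sequential crowdedness is a topological invariant of a subspace, so it suffices to produce one countable dense set that is not sequentially crowded; a convergent sequence of distinct points together with its limit is compact and hence lies in a single $A_n$, so the condition $p\notin\overline{(E\setminus\{p\})\cap A_n}$ for all $n$ (with the $A_n$ increasing) does block sequential convergence to $p$; and your side observations that $X$ is crowded and nowhere first countable under property (B) are both valid. The genuine gap is that the proof's central object, the dense set $E$, is never constructed: the ``diagonal recursion'' is only announced, and you yourself flag it as the main obstacle without resolving it. The difficulty is real, not cosmetic. Since the $A_n$ cover $X$, every point of $E$ carries a level $n(x)=\min\{n: x\in A_n\}$, and your condition forces points of $E$ placed in a prescribed neighbourhood $V_n$ of $p$ to have level exceeding $n$; if one fixes neighbourhoods $V_1\supseteq V_2\supseteq\cdots$ in advance and the interior of $\bigcap_n V_n$ is nonempty, no point of level less than $\infty$ may be placed there, so $E$ cannot be dense, while choosing the neighbourhoods adaptively after placing points is exactly the interaction between the (generally uncountable) neighbourhood filter of $p$ and the level constraints that you identify but do not control. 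As written, the proposal proves nothing beyond the reductions.

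It is worth noting how the paper sidesteps the construction entirely: it argues by contradiction. Assuming \textsf{US}, \emph{every} countable dense subset of $X$ is sequentially crowded (by your own invariance remark, since one such set exists), and this assumption is then used as a tool rather than refuted by an explicit example. Concretely, with $D_0=\{d_0,d_1,\dots\}$ dense and $D_n\subseteq X\setminus A_n$ countable dense (possible because $A_n$ is closed nowhere dense), the set $D_n\cup\{d_n\}$ is dense, hence sequentially crowded, yielding a sequence $(x^k_n)_k$ in $D_n$ converging to $d_n$; the set $A=\{d_0\}\cup\bigcup_{n\ge 1}\{x^k_n:k\in\omega\}$ is again dense, hence sequentially crowded, so some sequence $(a_m)$ in $A$ converges to $d_0$. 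The compact set $C=\{a_m:m\in\omega\}\cup\{d_0\}$ meets each $\{x^k_n:k\in\omega\}$ in a finite set (otherwise $d_n\ne d_0$ would be a second accumulation point of $C$), hence meets $X\setminus A_n$ for infinitely many $n$, so no $A_n$ contains $C$, contradicting property (B). This shows nonconstructively that a non-sequentially-crowded countable dense set $E$ of the kind you want does exist, but the existence comes out of the contradiction, not out of a recursion. To repair your write-up with minimal change, replace the attempted construction of $E$ by this indirect argument: suppose no such $E$ exists, i.e.\ all countable dense sets are sequentially crowded, and use that hypothesis itself to manufacture the compact sequence violating property (B).
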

\begin{proof} Striving for a contradiction suppose that $X$ is
\textsf{US}. Then, every countable dense subset of $X$ is sequentially crowded. Let $\{A_n:n=1,2\ldots\}$ be a countable family
consisting of closed nowhere dense subsets of $X$ witnessing the
property (B). We will show that there is a convergent sequence
$C\subseteq X$ such that no $A_n$ contains $C$, yielding a
contradiction. Without loss of generality, we can assume that for
every $n$ we have $A_n\subseteq A_{n+1}$. Fix a countable dense set
$D_0\subseteq X$ and let $D_0=\{d_0,d_1,d_2,\ldots\}$ be a faithful
enumeration of $D_0$. Since the set $A_n$ is closed and has empty
interior, for each $n\geq 1$ there is a countable set $D_n\subseteq
X\setminus A_n$ dense in $X$. The set $D_n\cup\{d_n\}$ is
sequentially crowded being countable dense in $X$, hence there is a
sequence $(x^k_n)_{k\in \omega}$ of elements of $D_n$ converging to
$d_n$. Define
$$A=\{d_0\}\cup \bigcup_{n=1}^\infty\{x^k_n:k\in \omega\}.$$
The countable set $A$ is dense in $X$, because its closure contains
$D_0$, and hence it is sequentially crowded. Pick a sequence
$(a_m)_{m\in \omega}$ of elements of $A$, convergent to $d_0$. The
set $$C=\{a_m:m\in\omega\}\cup \{d_0\}$$ is as desired. Indeed, note
that for each $n\geq 1$ the set $C\cap \{x^k_n:k\in\omega\}$ is
finite, for otherwise $d_n$ would be an accumulation point of $C$
distinct from $d_0$. Therefore, $C\cap
\{x^k_n:k\in\omega\}\neq\emptyset$, for infinitely many $n$'s which
means that for infinitely many $n$'s $C$ meets the set $X\setminus
A_n$. Since the family $\{A_n:n=1,2,\ldots\}$ is increasing, no
$A_n$ contains $C$.
\end{proof}

It is easy to see, cf. \cite[page 653]{KM} that any
infinite-dimensional Banach spaces endowed with the weak topology
enjoys the property (B). More generally, we have the following:

\begin{proposition}\cite[Proposition 5.2]{KM}\label{B_weak_general}
Let $(E,\|\cdot\|)$ be a normed space and let $\tau$ be a linear topology on $E$, strictly weaker than the norm topology. 
If norm closed balls in $E$ are $\tau$-closed and $\tau$-compact
sets are norm bounded, then $(E,\tau)$ has the property (B).

In particular, for an infinite-dimensional Banach space $E$, both
spaces $(E,w)$ and $(E^*,w^*)$ possess the property (B).
\end{proposition}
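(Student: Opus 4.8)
The plan is to exhibit an explicit witnessing family, namely the sequence of norm-closed balls
$$A_n=\{x\in E:\|x\|\le n\},\qquad n\in\omega.$$
These cover $E$ because every vector has finite norm, and each $A_n$ is $\tau$-closed directly by hypothesis. So, to establish property (B), I only have to verify two things: that every $A_n$ is $\tau$-nowhere-dense, and that every $\tau$-compact set sits inside some $A_n$. The second is immediate from the assumption that $\tau$-compact sets are norm bounded: if $K$ is $\tau$-compact, then $K$ is contained in a norm-ball, hence in some $A_n$.

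The hard part will be the nowhere-density, i.e.\ showing that each $A_n$ has empty $\tau$-interior; this is exactly the point where the hypothesis that $\tau$ is \emph{strictly} weaker than the norm topology must be used. Since scalar multiplication by $n$ is a $\tau$-homeomorphism and $A_n=nA_1$, all the $A_n$ have empty $\tau$-interior as soon as one of them does, so I would argue by contradiction: suppose some $A_n$ contains a nonempty $\tau$-open set $U$ and fix $x_0\in U$. Because $\tau$ is a linear topology, the translate $V=U-x_0$ is a $\tau$-neighborhood of $0$, and the triangle inequality gives $V\subseteq A_n-x_0\subseteq A_{2n}$, so $0$ has a norm-bounded $\tau$-neighborhood. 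Rescaling $V$ by $\varepsilon/(2n)$ (again a $\tau$-homeomorphism) produces, for every $\varepsilon>0$, a $\tau$-neighborhood of $0$ contained in the norm-ball of radius $\varepsilon$. By translation invariance of both topologies, this forces the norm topology to be contained in $\tau$; combined with $\tau$ being weaker than the norm topology, the two would coincide, contradicting strict weakness. Hence every $A_n$ is $\tau$-closed with empty interior, i.e.\ $\tau$-nowhere-dense, and $\{A_n:n\in\omega\}$ witnesses property (B).

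For the final assertion I would simply check the three hypotheses in each case. Let $E$ be an infinite-dimensional Banach space. For $\tau=w$: the weak topology is strictly weaker than the norm topology in infinite dimensions, norm-closed balls are convex and norm-closed, hence weakly closed (Hahn--Banach/Mazur), and weakly compact sets are weakly bounded, hence norm bounded by the uniform boundedness principle. For $\tau=w^{*}$ on $E^{*}$: the weak$^{*}$ topology is strictly weaker than the norm topology, norm-closed balls in $E^{*}$ are $w^{*}$-closed (indeed $w^{*}$-compact, by Banach--Alaoglu), and $w^{*}$-compact sets are $w^{*}$-bounded, hence norm bounded again by uniform boundedness, using completeness of $E$. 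In both cases the general statement applies, so $(E,w)$ and $(E^{*},w^{*})$ have property (B).
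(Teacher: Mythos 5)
Your proof is correct. The paper does not contain its own proof of this proposition --- it is quoted verbatim from \cite[Proposition 5.2]{KM} --- and your argument is precisely the standard one behind that result: the norm balls $A_n=\{x\in E:\|x\|\le n\}$ serve as the witnessing family (they are $\tau$-closed by hypothesis and absorb $\tau$-compact sets by norm-boundedness), the translation-and-rescaling argument correctly shows that a norm-bounded $\tau$-neighborhood of $0$ would force the norm topology to be contained in $\tau$, contradicting strict weakness, and the verification of the hypotheses for $(E,w)$ and $(E^*,w^*)$ via Mazur's theorem, Banach--Alaoglu, and the uniform boundedness principle is exactly as intended.
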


\begin{cor}
For an infinite-dimensional Banach space $E$, both spaces $(E,w)$
and $(E^*,w^*)$ are not \textsf{US} (hence, are not \textsf{CDH}).
\end{cor}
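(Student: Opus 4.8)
The plan is to obtain this as an immediate consequence of the two preceding results, Proposition \ref{B_weak_general} and Theorem \ref{US_no_B}, so that there is essentially nothing new to prove beyond assembling them. First I would recall that for an infinite-dimensional Banach space $E$ both the weak topology $w$ on $E$ and the weak$^*$ topology $w^*$ on $E^*$ are (locally convex) vector topologies, so that $(E,w)$ and $(E^*,w^*)$ are genuine topological vector spaces — this is the only hypothesis of Theorem \ref{US_no_B} that needs a word of justification.

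Next I would invoke the concluding clause of Proposition \ref{B_weak_general}, which asserts precisely that both $(E,w)$ and $(E^*,w^*)$ possess the property (B). Having observed that these are topological vector spaces enjoying property (B), Theorem \ref{US_no_B} applies verbatim and yields that neither space is \textsf{US}. Finally, since \textsf{CDH} implies \textsf{US} for every space, the contrapositive gives that neither $(E,w)$ nor $(E^*,w^*)$ is \textsf{CDH}, which is the parenthetical conclusion.

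There is no genuine obstacle here: all the work has already been done in isolating property (B) for weak topologies (Proposition \ref{B_weak_general}) and in showing that property (B) defeats unique separability for topological vector spaces (Theorem \ref{US_no_B}, through Lemmas \ref{seq_crowded} and \ref{lemma_B_not_US}). If one wanted a self-contained argument avoiding explicit reference to property (B), one could instead note that any infinite-dimensional topological vector space contains a nontrivial convergent sequence, for instance $(x/n)_{n\in\omega}$ for a nonzero vector $x$, and then run the construction of Lemma \ref{lemma_B_not_US} directly; but routing through the two stated results is cleaner and requires no further computation.
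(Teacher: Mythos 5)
Your proposal is correct and follows exactly the route the paper intends: the corollary is stated without proof precisely because it is the immediate combination of Proposition \ref{B_weak_general} (property (B) for $(E,w)$ and $(E^*,w^*)$) with Theorem \ref{US_no_B}, together with the observation that \textsf{CDH} implies \textsf{US}. Your assembly of these ingredients, including the remark that both weak topologies are genuine vector topologies, matches the paper's argument in full.
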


\section{All \textsf{CDH} topological vector spaces are Baire spaces}

The following theorem and its consequences are our main results.

\begin{thm}\label{CDH_implies_Baire_hom}
Every homogeneous \textsf{CDH} space $X$ containing a copy of the Cantor set is a Baire space.
\end{thm}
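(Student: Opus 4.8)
The plan is to prove the contrapositive: assuming $X$ is homogeneous, contains a copy of the Cantor set, but is \emph{not} a Baire space, I would construct two countable dense subsets of $X$ that cannot be exchanged by any autohomeomorphism, thereby showing $X$ is not even \textsf{US} (and hence not \textsf{CDH}). Since $X$ is homogeneous, failing to be Baire is equivalent to being meager (by the cited \cite[Theorem 2.3]{LM}); so I may assume $X=\bigcup_{n}F_n$ where each $F_n$ is closed and nowhere dense. Replacing $F_n$ by $\bigcup_{k\le n}F_k$, I arrange that the $F_n$ are increasing. The key structural input I want to exploit is that $X$ behaves, in this respect, very much like a space with property (B): although an arbitrary compact set need not sit inside a single $F_n$, a \emph{convergent sequence together with its limit} is compact, and I will show that the meager decomposition still interacts with convergent sequences in a way that obstructs \textsf{US}.

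The central idea is to produce, as in Lemma \ref{lemma_B_not_US}, a convergent sequence $C$ that is not contained in any $F_n$, while also producing a countable dense set $D$ \emph{all} of whose convergent sequences behave tamely relative to the $F_n$; the two sets will then be non-homeomorphic because any homeomorphism $h:X\to X$ carries $C$ (a convergent sequence) to a convergent sequence, and I will arrange the combinatorics so that no convergent sequence inside the image set $h[D]$ can escape the $F_n$'s the way $C$ does. Concretely, I first exploit the copy of the Cantor set: a Cantor set is compact and crowded, so every point of $X$ — by homogeneity — is a limit of a nontrivial convergent sequence, which is exactly the hypothesis that feeds Lemma \ref{seq_crowded}. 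Thus $X$ has a sequentially crowded countable dense subset, and moreover I can build dense sets with prescribed convergence structure by the inductive thickening argument in the proof of Lemma \ref{seq_crowded}. Using the nowhere-density of each $F_n$, for each $n$ I choose a countable dense $D_n\subseteq X\setminus F_n$, and then, exactly mirroring the diagonal construction in Lemma \ref{lemma_B_not_US}, I assemble a dense set $A$ from convergent sequences $(x^k_n)_k\to d_n$ with $x^k_n\in D_n$, and extract from $A$ a single convergent sequence $C=\{a_m\}\cup\{d_0\}$ that meets $X\setminus F_n$ for infinitely many $n$ and hence is not contained in any $F_n$.

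The contrast I need on the other side is a countable dense set whose every convergent sequence \emph{is} eventually contained in some $F_n$; here is where the Cantor set copy does real work rather than merely supplying convergent sequences. The plan is to build a countable dense subset $D$ by accumulating, near each of its points, only sequences drawn from a fixed compact (Cantor) piece that lies inside a single $F_N$ — using the fact that a genuinely compact set such as a Cantor set, unlike a mere sequence, can be captured by the meager cover in a controlled way after intersecting with the increasing closed sets. Then every nontrivial convergent sequence in $D$ converges within the closure of such a compact piece and is absorbed into some $F_n$, so $D$ has no sequence of the escaping type that $C$ exhibits. Since a homeomorphism preserves the property ``some convergent sequence fails to be absorbed by every $F_n$'' only up to transporting the decomposition, I must be careful that this property is genuinely an obstruction; I expect the main obstacle to lie precisely here — in showing that no autohomeomorphism can reconcile the two convergence profiles, i.e., in pinning down a homeomorphism-invariant feature (phrased in terms of convergent sequences relative to the $F_n$) that $C\subseteq A$ possesses and $D$ does not. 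Making that invariant precise, and verifying it survives an arbitrary homeomorphism of $X$, is the delicate step; the remaining density and sequential-crowdedness verifications are routine adaptations of the two lemmas already proved.
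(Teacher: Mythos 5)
There is a genuine gap, and it sits exactly where you anticipated: no homeomorphism-invariant of the kind you are seeking exists, and in fact your stated goal --- showing that a homogeneous meager space containing a Cantor set fails to be \textsf{US} --- is provably false. The space $\mathbb{Q}^\omega$ is a homogeneous (topological group) meager space (it is the union of the closed nowhere dense sets $\{x: x_0=q\}$, $q\in\mathbb{Q}$) containing the Cantor set $\{0,1\}^\omega$; yet, being crowded, separable and metrizable, it is \textsf{US} by the Remark in Section 2: all of its countable dense subsets are copies of $\mathbb{Q}$. This explains why your two constructions cannot be played off against each other through \textsf{US}: the property ``some convergent sequence of the countable dense set escapes every $F_n$'' is extrinsic --- every nontrivial convergent sequence is homeomorphic to $\omega+1$, and a homeomorphism between two countable dense \emph{subspaces} (which is all that \textsf{US} provides; there is no ambient autohomeomorphism) carries no information about the decomposition $(F_n)$. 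Even under full \textsf{CDH}, an ambient $h:X\to X$ does not respect the cover ($h[F_n]$ is merely some closed nowhere dense set), so ``being absorbed by the fixed $F_n$'' is not preserved in either direction. Your second construction also has an internal problem: a convergent sequence in $D$ may pick points from infinitely many compact pieces lying in different $F_N$'s, so absorbing each piece separately does not control all convergent sequences of $D$.

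The paper's Lemma \ref{lemma_meager_not_CDH} resolves precisely this difficulty by choosing an invariant that \emph{is} transported by an ambient homeomorphism: density of the trace of a countable dense set on some copy of the Cantor set. One builds, by the same diagonal scheme you describe, a dense set $D=\bigcup_{n\ge 1}\{x^k_n:k\in\omega\}$ with $D_n\subseteq X\setminus A_n$, so that $D\cap A_{n_0}$ lies inside finitely many convergent sequences and hence has only finitely many accumulation points; on the other side, Lemma \ref{dense_notgdelta} yields a countable dense $B$ with $B\cap C$ dense in a Cantor set $C$. If $h(B)=D$ for an ambient homeomorphism $h$ --- this is where \textsf{CDH}, and not merely \textsf{US}, is indispensable --- then $D$ is dense in the Cantor set $h(C)$; but the Baire Category Theorem applied \emph{inside the compact set} $h(C)$ produces $n_0$ with $h(C)\cap A_{n_0}$ of nonempty relative interior, where $D$ cannot be dense. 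Note how compactness of the Cantor set substitutes for property (B): a meager cover need not absorb a convergent sequence, but it must capture a relatively open piece of every compact subset. Your escaping-sequence construction (the first half of your proposal, which is correct as far as it goes, as is your use of Lemma \ref{seq_crowded} to get a sequentially crowded dense set) is the right tool for Lemma \ref{lemma_B_not_US}, where property (B) makes escape itself contradictory; in the meager setting it proves nothing, and the Cantor set must be exploited through the ambient homeomorphism rather than as a source of convergent sequences with controlled absorption.
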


Recall that each homogeneous space is either meager or a Baire space. Therefore, the above theorem follows immediately from Lemmas \ref{seq_crowded} and \ref{lemma_meager_not_CDH} (below).

Since a connected \textsf{CDH} space is
homogeneous (see \cite{FL}), we have the following

\begin{cor}\label{CDH_implies_Baire_conn}
Every connected \textsf{CDH} space $X$ with a copy of the Cantor set is a Baire space.
\end{cor}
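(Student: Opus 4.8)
The plan is to reduce the corollary immediately to Theorem \ref{CDH_implies_Baire_hom}, using the single extra input already flagged in the text: that every connected \textsf{CDH} space is homogeneous (the result of \cite{FL}). First I would invoke that homogeneity result. This is where the only genuine content of the corollary lives: connectedness on its own says nothing about transitivity of the homeomorphism group, but combined with countable dense homogeneity it forces the space to be homogeneous. So given a connected \textsf{CDH} space $X$, I would conclude from \cite{FL} that $X$ is homogeneous.

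With homogeneity in hand, the rest is a verbatim transfer of hypotheses. The space $X$ is now a homogeneous \textsf{CDH} space, and by assumption it contains a copy of the Cantor set. These are exactly the three hypotheses of Theorem \ref{CDH_implies_Baire_hom}, so that theorem applies and yields that $X$ is a Baire space. Since connectedness is used only to supply homogeneity, and since the Cantor-set hypothesis and the \textsf{CDH} hypothesis are shared by the two statements, no hypothesis needs to be checked beyond this substitution.

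Consequently there is essentially no independent obstacle in the corollary itself: all the real difficulty has been discharged upstream, partly in the cited homogeneity theorem and partly in Theorem \ref{CDH_implies_Baire_hom} (which in turn rests on Lemmas \ref{seq_crowded} and \ref{lemma_meager_not_CDH} together with the dichotomy that a homogeneous space is either meager or Baire). If I wanted to be scrupulous, the only point worth a sentence is confirming that the copy of the Cantor set guaranteed by the hypothesis is preserved under the identification used to prove homogeneity — but since homogeneity concerns $X$ as a whole and does not alter its subspaces, this is automatic, and the proof is a one-line deduction.
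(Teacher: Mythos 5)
Your proposal is correct and follows exactly the paper's own route: the paper derives the corollary in one line by citing \cite{FL} for the fact that a connected \textsf{CDH} space is homogeneous and then applying Theorem \ref{CDH_implies_Baire_hom}. Nothing further is needed.
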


The next corollary is an instant consequence of homogeneity of topological groups

\begin{cor}\label{CDH_implies_Baire_group} 
Every \textsf{CDH} topological group $G$ containing a copy of the Cantor set is a Baire space. 
\end{cor}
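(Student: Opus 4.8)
The plan is to reduce the statement directly to Theorem \ref{CDH_implies_Baire_hom}, whose hypotheses are homogeneity, \textsf{CDH}, and the presence of a copy of the Cantor set. Two of these three hypotheses are already granted: $G$ is assumed \textsf{CDH} and to contain a copy of the Cantor set. So the only thing that requires verification is that an arbitrary topological group is a homogeneous space as a topological space.

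First I would recall the standard fact that left translations in a topological group are self-homeomorphisms. For a fixed $a\in G$, the map $L_a\colon G\to G$ given by $L_a(g)=ag$ is a bijection; it is continuous because multiplication $G\times G\to G$ is continuous (restricting to the slice $\{a\}\times G$), and its inverse $L_{a^{-1}}$ is continuous for the same reason. Hence $L_a$ is a homeomorphism of $G$ onto itself.

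Next I would use this to produce, for any two points $x,y\in G$, a self-homeomorphism carrying $x$ to $y$: take $a=yx^{-1}$, so that $L_a(x)=yx^{-1}x=y$. Since $x$ and $y$ were arbitrary, $G$ is homogeneous. At this point all three hypotheses of Theorem \ref{CDH_implies_Baire_hom} hold for $G$, and that theorem immediately yields that $G$ is a Baire space.

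I do not expect any genuine obstacle here, since the corollary is essentially a specialization: the entire content is the elementary observation that topological groups are homogeneous, after which Theorem \ref{CDH_implies_Baire_hom} does all the real work. The only point worth stating carefully is the direction of translation (ensuring $a=yx^{-1}$ rather than $x^{-1}y$, i.e.\ using left translations consistently), but this is purely notational.
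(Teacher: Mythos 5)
Your proposal is correct and follows exactly the paper's route: the paper derives this corollary as an instant consequence of Theorem \ref{CDH_implies_Baire_hom} together with the homogeneity of topological groups, which is precisely the reduction you carry out (your verification that translations are homeomorphisms is just the standard fact the paper takes for granted).
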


For the third corollary we need the trivial observation, that, for a topological vector space $E$, either $E=\{0\}$ (so it is a Baire space) or $E$ contains a copy of the real line, hence also of the Cantor set.

\begin{cor}\label{CDH_implies_Baire_tvs} 
every \textsf{CDH} topological vector space $E$ is a Baire space.
\end{cor}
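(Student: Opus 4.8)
The plan is to reduce the statement to Theorem~\ref{CDH_implies_Baire_hom} by checking its two hypotheses for an arbitrary \textsf{CDH} topological vector space $E$. First I would observe that $E$ is homogeneous: for any two points $x,y\in E$ the translation $z\mapsto z-x+y$ is a homeomorphism of $E$ carrying $x$ to $y$, so the homogeneity required by the theorem is automatic for every topological vector space (indeed for every topological group, which also settles Corollary~\ref{CDH_implies_Baire_group} along the way). Thus only the requirement that $E$ contain a copy of the Cantor set remains to be verified.

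Next I would dispose of the trivial case. If $E=\{0\}$, then $E$ is a one-point space, which is vacuously a Baire space, and there is nothing to prove. So assume $E\neq\{0\}$ and fix a nonzero vector $v\in E$.

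The substantive step is to exhibit a copy of the Cantor set inside $E$, which I would obtain by first embedding the real line. Consider the map $\mathbb{R}\to E$, $t\mapsto tv$; it is continuous because scalar multiplication is continuous, and injective because $v\neq0$ and $E$ is Hausdorff. Here I invoke the standing convention that all spaces are Tikhonov, so that $E$ is a Hausdorff topological vector space, and then apply the standard fact that every finite-dimensional subspace of a Hausdorff topological vector space is linearly homeomorphic to the corresponding Euclidean space. Consequently the one-dimensional subspace $\lin\{v\}$ is homeomorphic to $\mathbb{R}$. Since the usual middle-thirds Cantor set sits inside $\mathbb{R}$ as a topological copy of the Cantor set, $E$ contains such a copy as well. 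With both hypotheses of Theorem~\ref{CDH_implies_Baire_hom} in place, that theorem yields immediately that $E$ is a Baire space.

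The argument is genuinely short, and the only point that requires care — the ``main obstacle'', such as it is — is the embedding of $\mathbb{R}$: one must use Hausdorffness to ensure that the algebraic line $\lin\{v\}$ carries the Euclidean topology rather than, say, the indiscrete one. This is precisely where the blanket Tikhonov assumption enters. Everything else — homogeneity via translations, the presence of the Cantor set in $\mathbb{R}$, and the appeal to Theorem~\ref{CDH_implies_Baire_hom} — is routine.
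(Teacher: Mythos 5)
Your proposal is correct and matches the paper's argument exactly: the paper also dismisses the trivial case $E=\{0\}$, notes that otherwise $E$ contains a copy of the real line (hence of the Cantor set), and applies Theorem~\ref{CDH_implies_Baire_hom} via the homogeneity of topological vector spaces. Your write-up merely makes explicit the standard fact (finite-dimensional subspaces of a Hausdorff topological vector space are Euclidean) that the paper leaves implicit in its ``trivial observation.''
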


Now, let us present two lemmas needed to conclude Theorem \ref{CDH_implies_Baire_hom}. 

\begin{lem}\label{dense_notgdelta} Let $C$ be a copy of the Cantor set in a separable space $X$.
Then the space $X$ contains a countable dense
subset $D$ such that
$D\cap C$ is dense in $C$.
Therefore, $D$ is not a $G_\delta$-subset of $X$.
\end{lem}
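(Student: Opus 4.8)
The plan is to construct $D$ explicitly and then derive the non-$G_\delta$ property from a Baire category argument carried out inside the Cantor set $C$. The construction itself is immediate, so the real content lies in the second assertion.

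First I would exploit separability twice. Since $X$ is separable, I fix a countable dense subset $D_0\subseteq X$. Since $C$, being a copy of the Cantor set, is itself separable, I fix a countable set $E\subseteq C$ that is dense in $C$. I then set $D=D_0\cup E$. This $D$ is countable, it is dense in $X$ because it contains $D_0$, and it satisfies $E\subseteq D\cap C$, so $D\cap C$ is dense in $C$. This settles the first assertion with no difficulty.

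For the second assertion, the key preliminary observation is that a dense subset of a crowded space is crowded. Applied to the countable dense set $D\cap C$ inside the (perfect, hence crowded) Cantor set $C$, this shows $D\cap C$ has no isolated points: if some $x\in D\cap C$ had a neighborhood in $C$ meeting $D\cap C$ only in $x$, density of $D\cap C$ would force that neighborhood to equal $\{x\}$, contradicting that $C$ has no isolated points. Next I would argue by contradiction: suppose $D$ is $G_\delta$ in $X$, say $D=\bigcap_n U_n$ with each $U_n$ open in $X$. Intersecting with $C$ gives $D\cap C=\bigcap_n(U_n\cap C)$, a $G_\delta$ subset of $C$. Since $C$ is Polish (being compact metrizable) and a $G_\delta$ subset of a Polish space is Polish, $D\cap C$ would be a countable Polish space. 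By the Baire Category Theorem a nonempty countable Polish space must contain an isolated point, since otherwise it would be a countable union of nowhere-dense singletons and hence meager in itself. This contradicts the fact that $D\cap C$ has no isolated points, so $D$ cannot be $G_\delta$ in $X$.

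The main obstacle, and really the only nontrivial point, is this Baire-category step showing that a countable dense subset of the Cantor set is not $G_\delta$ in $C$; the construction of $D$ and the passage from $G_\delta$ in $X$ to $G_\delta$ in $C$ are both purely formal.
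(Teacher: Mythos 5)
Your proposal is correct and follows essentially the same route as the paper: the paper's one-line proof takes $D=(A\setminus C)\cup B$ for countable dense $A\subseteq X$, $B\subseteq C$ (your $D_0\cup E$ is an inessential variant) and leaves the non-$G_\delta$ claim as ``clearly,'' which is exactly the standard Baire-category argument you spell out. Your filled-in details --- that $D\cap C$ is crowded, that a $G_\delta$ subset of the Polish space $C$ is Polish, and that a countable crowded Polish space would violate the Baire Category Theorem --- are all accurate.
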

\begin{proof}
Let $A$ be any countable dense subset of $X$, and $B$ any countable dense subset of $C$. Put $D = (A\setminus C)\cup B$. Clearly, $D$ is as required.
\end{proof}

\begin{lem}\label{lemma_meager_not_CDH}
Let $X$ be a meager space containing a copy of the Cantor set and a sequentially crowded countable dense subset. Then the space $X$ is not \textsf{CDH}.
\end{lem}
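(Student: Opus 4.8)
The plan is to assume $X$ is \textsf{CDH} and derive a contradiction by producing two countable dense subsets that no self-homeomorphism of $X$ can interchange. The lever is a homeomorphism-invariant property of a countable dense set $D\subseteq X$: \emph{there exists a copy $K$ of the Cantor set in $X$ with $D\cap K$ dense in $K$}. This is clearly preserved under homeomorphisms, since if $h\colon X\to X$ is a homeomorphism and $K$ witnesses the property for $D$, then $h[K]$ is a Cantor set witnessing it for $h[D]$. (One could instead run the argument with ``$D$ is a $G_\delta$-subset of $X$'', the invariant singled out in Lemma \ref{dense_notgdelta}, but the Cantor-density invariant turns out to be more robust; see the final paragraph.)

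By Lemma \ref{dense_notgdelta} the space $X$ carries a countable dense set $D$ with $D\cap C$ dense in the prescribed Cantor set $C$, so $D$ \emph{has} the property. The heart of the proof is to construct a \emph{second} countable dense set $D'$ that \emph{fails} it, i.e.\ $D'$ is dense in no Cantor subset of $X$; granting this, \textsf{CDH} supplies a homeomorphism $h$ with $h[D]=D'$, whence $h[C]$ is a Cantor set and $D'\cap h[C]=h[D\cap C]$ is dense in $h[C]$, contradicting the choice of $D'$. To build $D'$ I use meagerness to fix an increasing sequence $A_0\subseteq A_1\subseteq\cdots$ of closed nowhere dense sets with $\bigcup_n A_n=X$, and I choose $D'=\{p_n:n\in\omega\}$ so that $p_n\notin A_n$. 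As the $A_n$ increase this forces $D'\cap A_m\subseteq\{p_0,\dots,p_{m-1}\}$, so $D'\cap A_m$ is finite for every $m$. Now if $K$ is any copy of the Cantor set, then $K=\bigcup_n(K\cap A_n)$ is an increasing union of relatively closed sets, so by the Baire category theorem in the compact metric space $K$ some $K\cap A_{n_0}$ has nonempty interior $V$ in $K$; on $V$ we have $D'\cap V\subseteq D'\cap A_{n_0}$, a finite set, so $D'$ omits a nonempty portion of $K$ and is not dense in $K$, as required.

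The step I expect to be the main obstacle is making $D'$ simultaneously \emph{dense} and \emph{finite on each $A_n$}, because $X$ need be neither metrizable nor first countable and so density cannot be arranged through a countable base. This is exactly where the hypothesis of a sequentially crowded countable dense subset $E=\{e_k\}$ is needed. I will secure density by guaranteeing $e_k\in\overline{D'}$ for every $k$, and I can certify membership in the closure via a \emph{sequence}: if $D'$ contains a sequence converging to $e_k$, then $e_k\in\overline{D'}$ even without first countability. Thus the construction interleaves, for each $k$, a sequence of points of $D'$ converging to $e_k$ into a single enumeration $(p_n)$ subject to $p_n\notin A_n$. The delicate point is to choose the terms of these convergent sequences so that they escape every $A_n$ (keeping all $D'\cap A_n$ finite) while still converging to their targets; here one exploits that each set $E\setminus A_n$ is itself dense (as $E$ is dense and $A_n$ is closed nowhere dense), together with the sequential crowdedness of $E$, recursively replacing any term that sinks into a low $A_n$ by nearby points of $E\setminus A_n$.

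Finally, if one wishes to route the argument through the $G_\delta$ invariant instead, the same $D'$ is the natural candidate for a countable dense $G_\delta$ set: one writes $X\setminus D'=\bigcup_n(A_n\setminus D')$ and seeks to present it as an $F_\sigma$ set. I would flag that this is immediate when the $A_n$ are perfectly normal (so that the relatively open sets $A_n\setminus D'$ are $F_\sigma$) but requires extra care in general, precisely because a point of $D'$ may lie in the closure of $A_n\setminus D'$. The Cantor-density invariant is preferable in that it sidesteps this issue entirely and rests only on Baire category inside the Cantor set.
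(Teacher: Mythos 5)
Your overall architecture --- the homeomorphism-invariant property ``$D$ is dense in some Cantor subset of $X$'', its verification via Lemma \ref{dense_notgdelta}, and the final Baire-category step inside a Cantor set --- is exactly that of the paper. The genuine gap is in your central construction of $D'$. Enumerating $D'=\{p_n:n\in\omega\}$ with $p_n\notin A_n$ forces every convergent sequence you interleave to escape the \emph{entire} cover: if the sequence aimed at $e_k$ occupies positions $n_0<n_1<\cdots$ in the enumeration, then $p_{n_j}\notin A_{n_j}\supseteq A_m$ as soon as $n_j\ge m$, so that sequence may have only finitely many terms in each $A_m$ (and indeed, your finiteness of $D'\cap A_m$ demands this of every interleaved sequence). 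But sequential crowdedness yields far less. First, a small but real omission: crowdedness of the fixed set $E$ does not give sequences inside $E\setminus A_n$ converging to $e_k$; you must use the \textsf{CDH} assumption to transfer crowdedness to the auxiliary dense set $D_n\cup\{e_k\}$ with $D_n\subseteq X\setminus A_n$ countable dense (this is what the paper does). Second, and decisively, even this transferred crowdedness produces, for each \emph{fixed} $n$, one sequence converging to $e_k$ that avoids that \emph{single} $A_n$. Passing from ``one $A_n$ per sequence'' to ``one sequence eventually avoiding every $A_n$'' is a diagonalization across convergent sequences, and your proposed repair --- ``recursively replacing any term that sinks into a low $A_n$ by nearby points of $E\setminus A_n$'' --- has no content in a general Tychonoff space: there is no neighborhood base at $e_k$, hence no notion of ``nearby'' that preserves convergence of the modified sequence. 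The obstruction is not hypothetical; it is the theme of Section 2 of the paper. In a space with property (B) --- e.g.\ an infinite-dimensional separable Banach space in its weak topology, which is meager, contains Cantor sets, and has a sequentially crowded countable dense subset (Lemma \ref{seq_crowded}, Proposition \ref{B_weak_general}) --- every convergent sequence together with its limit is compact and hence trapped inside a single $A_n$, so \emph{any} countable dense set whose density is certified by convergent sequences, as yours is, must meet some $A_n$ in an infinite set. Thus the set $D'$ you need cannot be manufactured from the ingredients you actually deploy.

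The repair is precisely the paper's weakening of the invariant. Require only that the sequence $(x^k_n)_k$ converging to $d_n$ avoid $A_n$ itself; then for $D=\bigcup_{n\ge 1}\{x^k_n:k\in\omega\}$ and each $m$, the trace $D\cap A_m$ is contained in the union of the first $m-1$ convergent sequences --- possibly infinite, but with at most $m-1$ accumulation points (namely among $d_1,\dots,d_{m-1}$). Your final step then survives with ``finite'' replaced by ``finitely many accumulation points'': the Baire Category Theorem in a Cantor set $C'$ gives a nonempty relatively open $V\subseteq C'\cap A_{n_0}$, and $V$ is perfect, so density of $D\cap V$ in $V$ would make every point of $V$ an accumulation point of $D\cap A_{n_0}$ --- uncountably many, a contradiction. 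With that single change, your argument coincides with the paper's proof.
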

\begin{proof}
The first part of the proof will follow closely the argument from the proof of Lemma \ref{lemma_B_not_US}.
Striving for a contradiction, suppose that $X$ is \textsf{CDH}.
Let
$\{A_n:n=1,2\ldots\}$ be a countable family of closed nowhere dense
subsets of $X$ such that $X= \bigcup_{n\ge 1} A_n$. Without loss of
generality, we can assume that for every $n$ we have $A_n\subseteq
A_{n+1}$.

Since $X$ is \textsf{CDH}, every countable dense subset of $X$ is sequentially crowded.
Fix a countable dense set $D_0\subseteq X$ and let
$D_0=\{d_1,d_2,\ldots\}$ be a faithful enumeration of $D_0$. Since
the set $A_n$ is closed and has empty interior, for each $n\geq 1$,
there is a countable set $D_n\subseteq X\setminus A_n$ dense in $X$.
The set $D_n\cup\{d_n\}$ is sequentially crowded being countable
dense in $X$, hence there is a sequence $(x^k_n)_{k\in \omega}$ of
elements of $D_n$ converging to $d_n$. Define
$$D=\bigcup_{n=1}^\infty\{x^k_n:k\in \omega\}.$$
The countable set $D$ is dense in $X$, because its closure contains
$D_0$.

Now, let $C\subseteq X$ be a copy of a Cantor set. By Lemma \ref{dense_notgdelta}, $X$ contains a countable dense
subset $B$, such that $B\cap C$ is dense in
$C$. Since $X$ is \textsf{CDH}, there exists a homeomorphism $h$ of
$X$ with $h(B)=D$. Then, $h(B\cap C)=h(C)\cap D$; hence, $h(C)\cap D$ is
dense in $C'=h(C)$, yet another copy of the Cantor set. To obtain a
contradiction, we will show that for each $C'$, a copy of the Cantor set,
$C'\cap D$ is not dense in $C'$. By the Baire Category Theorem,
there is $n_0$ such that $C'\cap A_{n_0}$ has a nonempty interior in
$C'$. The set $D\cap A_{n_0}$, being a subset of
$\bigcup_{n=1}^{n_0-1}\{x^k_n:k\in \omega\}$, has only finitely many
accumulation points; hence, $C'\cap D$ is not dense in $C'$.
\end{proof}

Let us note that, for metrizable spaces, the assertion of Theorem
\ref{CDH_implies_Baire_hom} is known (see \cite[Corollary 3]{Me} or cf. remarks preceding Proposition
2.2 in \cite{HG}). For the sake of self-completeness we include the
sketch of proof. If a separable metrizable homogeneous space were not a Baire space then,
as pointed out below Theorem
\ref{CDH_implies_Baire_hom}, it would be meager. This would allow to
construct a $G_\delta$ countable dense subset which, in turn, by
\textsf{CDH}, yields that all countable subsets would be $G_\delta$
(see \cite[Theorem 3.4]{FZ}). The latter contradicts our Lemma \ref{dense_notgdelta} (see also \cite[Theorem 3.5]{FZ}).
\medskip

Recall the following result, proved by Fitzpatrick and Zhou in \cite{FZ}:
\begin{thm}\cite[Theorem 2.5]{FZ}\label{Fitzpatrick_Zhou}
Any \textsf{CDH} space $X$ can be represented as a countable union of a discrete family  of clopen homogeneous \textsf{CDH} subspaces $U_n$.
\end{thm}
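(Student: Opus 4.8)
The plan is to decompose $X$ into the orbits of its homeomorphism group $\mathcal{H}(X)$ and to show that these orbits are exactly the desired clopen homogeneous \textsf{CDH} pieces. Write $x \sim y$ if $h(x) = y$ for some $h \in \mathcal{H}(X)$; the equivalence classes (orbits) partition $X$. The first observation is that \emph{every} homeomorphism maps each orbit onto itself: for $g \in \mathcal{H}(X)$ one has $g(\mathrm{orbit}(x)) = \{(g h)(x) : h \in \mathcal{H}(X)\} = \mathrm{orbit}(x)$, since $gh$ ranges over all of $\mathcal{H}(X)$. Consequently $\mathcal{H}(X)$ acts transitively on each orbit by self-homeomorphisms, so each orbit is homogeneous, and any homeomorphism realizing the \textsf{CDH} property automatically respects the orbit partition.

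Next I would prove the two combinatorial facts that drive everything. Fix a countable dense set $D \subseteq X$. I claim every orbit $O$ meets $D$: given $x \in O$, the set $D \cup \{x\}$ is again countable and dense, so by \textsf{CDH} there is $h \in \mathcal{H}(X)$ with $h(D \cup \{x\}) = D$; since $h$ preserves orbits, $h(x) \in O$, while $h(x) \in D$, whence $O \cap D \neq \emptyset$. Choosing one point of $D$ in each orbit shows there are only \emph{countably many} orbits. The same mechanism shows that the complement of an orbit can never contain a countable dense subset of $X$.

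The crux is to show each orbit $O$ is open. Here I would first establish a translation lemma: \emph{an orbit with nonempty interior is open}. Indeed, fix $x_0 \in \mathrm{int}(O)$; for any $y \in O$ transitivity gives $h \in \mathcal{H}(X)$ with $h(x_0) = y$, and then $h(\mathrm{int}(O))$ is an open neighbourhood of $y$ contained in $h(O) = O$, so $y \in \mathrm{int}(O)$. It therefore suffices to rule out orbits with empty interior. If $O$ had empty interior, then $X \setminus O$ would be dense, and, using separability to extract a countable dense subset from a dense set, there would be a countable dense subset of $X$ disjoint from $O$, contradicting the previous paragraph. Hence every orbit has nonempty interior and so is open, thus clopen, its complement being the union of the remaining open orbits. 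I expect this step, excluding empty-interior orbits, to be the main obstacle, and the decisive input is precisely the property that each orbit meets every countable dense set.

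It remains to check that each orbit $O$ is \textsf{CDH}. Being clopen in the separable space $X$, $O$ is separable. Given two countable dense subsets $C, C'$ of $O$, fix a countable dense subset $D^{*}$ of the open set $X \setminus O$; then $C \cup D^{*}$ and $C' \cup D^{*}$ are countable dense in $X$, so \textsf{CDH} yields $h \in \mathcal{H}(X)$ with $h(C \cup D^{*}) = C' \cup D^{*}$. Since $h$ fixes $O$ setwise, $h|_{O}$ is a self-homeomorphism of $O$ carrying $C$ onto $C'$, so $O$ is \textsf{CDH}. Finally, the orbits form a countable partition of $X$ into clopen sets, which is automatically a discrete family, and each member is homogeneous and \textsf{CDH}; this is the required representation. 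The only place where separability must be invoked with care is the extraction of countable dense subsets from dense sets and from open subspaces, which is immediate in the second-countable case.
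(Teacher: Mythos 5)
The paper itself gives no proof of this statement---it is quoted from Fitzpatrick--Zhou \cite{FZ}---so your proposal can only be measured against the standard argument, which is indeed the orbit decomposition you chose. Most of your steps are correct: the orbit partition is invariant under every homeomorphism; the ``meeting lemma'' (every orbit meets every countable dense set, via mapping $D\cup\{x\}$ onto $D$) is right and gives countably many orbits; the translation lemma (an orbit with nonempty interior is open) is right; a countable clopen partition is automatically discrete; and your derivation of the \textsf{CDH} property of a clopen orbit $O$ is correct, because $h$ preserves $O$ setwise and because $D\cap(X\setminus O)$ is dense in the \emph{open} set $X\setminus O$ (open subspaces of separable spaces are separable).

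The genuine gap is exactly where you anticipated it: ruling out orbits with empty interior. Your argument there needs ``$X\setminus O$ dense $\Rightarrow$ there is a \emph{countable} dense subset of $X$ inside $X\setminus O$,'' and this extraction is false in general separable Tikhonov spaces: a dense subspace of a separable space need not be separable. For instance, the $\Sigma$-product $\{x\in 2^{\omega_1}: x_\alpha\neq 0 \text{ for only countably many } \alpha\}$ is dense in the separable space $2^{\omega_1}$ (separable by the Hewitt--Marczewski--Pondiczery theorem) but contains no countable dense subset of $2^{\omega_1}$. Note also that the natural repair does not close the hole: the set $D\setminus O$ \emph{is} countable, so if it were dense you would get your contradiction with the meeting lemma at once; but if it is not dense you only obtain a nonempty open $V$ with $D\cap V\subseteq O$, i.e.\ $O$ is somewhere dense relative to $D$---which does not exclude an orbit that is dense in $V$ yet has empty interior, and excluding that configuration is the real content of \cite[Theorem 2.5]{FZ}. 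Your proof is complete for second-countable (in particular, separable metrizable) spaces, where every subspace is separable, as you note; but the theorem is invoked in this paper in full generality---e.g.\ in the proof of Theorem \ref{product_Cantor_set} it is applied to products $X\times Y$ of arbitrary crowded spaces, which are separable but typically not second countable---so in the setting where the statement is actually used, the central step of your argument is missing.
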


Combining the above theorem with our Theorem \ref{CDH_implies_Baire_hom}, we get the following:
\begin{thm}
Every \textsf{CDH} space $X$ is a countable union of a discrete family  of homogeneous clopen \textsf{CDH}
subspaces $U_n$, such that each $U_n$ is either a Baire space or does not contain a copy of the Cantor set.
In particular, $X$ is a union of disjoint clopen \textsf{CDH} subspaces $U$ and $V$ such that $U$ is a Baire space,
$V$ is meager and does not contain a copy of the Cantor set. 
\end{thm}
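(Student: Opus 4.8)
The plan is to combine the Fitzpatrick--Zhou decomposition (Theorem \ref{Fitzpatrick_Zhou}) with Theorem \ref{CDH_implies_Baire_hom} and the dichotomy that a homogeneous space is either meager or a Baire space (\cite[Theorem 2.3]{LM}). First I would apply Theorem \ref{Fitzpatrick_Zhou} to write $X=\bigcup_n U_n$, a union of a discrete family of clopen homogeneous \textsf{CDH} subspaces. Fix $n$. If $U_n$ contains a copy of the Cantor set, then, being homogeneous and \textsf{CDH}, it is a Baire space by Theorem \ref{CDH_implies_Baire_hom}. Contrapositively, if $U_n$ is not a Baire space, then it contains no copy of the Cantor set. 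This already gives the first assertion: each $U_n$ is a Baire space or it does not contain a copy of the Cantor set.

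For the ``In particular'' part I would split the index set using the above dichotomy: set $U=\bigcup\{U_n: U_n \text{ is a Baire space}\}$ and $V=\bigcup\{U_n: U_n \text{ is meager}\}$. Since every homogeneous $U_n$ is either meager or Baire, these two families partition the indices, and since $\{U_n\}$ is a discrete family of clopen sets, both $U$ and $V$ are clopen with $X=U\cup V$ and $U\cap V=\emptyset$. A union of a discrete family of clopen Baire spaces is a Baire space (a countable intersection of dense open subsets of $X$ meets each $U_n$ in a set dense in $U_n$, hence is dense in $U$), so $U$ is a Baire space; dually, a union of a discrete family of clopen meager spaces is meager, so $V$ is meager. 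To see that $V$ contains no copy of the Cantor set, suppose $C\subseteq V$ were one. As $\{U_n\}$ is a discrete clopen cover of $V$ and $C$ is compact, $C$ meets only finitely many $U_n$, and each nonempty $C\cap U_n$ is a nonempty clopen subset of $C$, hence itself a copy of the Cantor set contained in the meager piece $U_n$ --- contradicting the first paragraph.

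The crux, and the step I expect to be the main obstacle, is verifying that $U$ and $V$ are themselves \textsf{CDH}. I would first establish the key claim that every self-homeomorphism $h$ of $X$ satisfies $h(U)=U$ and $h(V)=V$. Indeed, $W=h(U)\cap V$ is clopen in $X$; as a clopen subset of the meager space $V$ it is meager, while as a clopen subset of the Baire space $h(U)$ (a homeomorphic image of $U$) it is a Baire space, forcing $W=\emptyset$. Thus $h(U)\subseteq U$, and applying this to $h^{-1}$ gives $h(U)=U$, whence $h(V)=V$.

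With this preservation property in hand, \textsf{CDH} of $U$ follows in the standard way. Given countable dense sets $D_1,D_2\subseteq U$, fix a countable dense set $E\subseteq V$; then $D_1\cup E$ and $D_2\cup E$ are countable dense subsets of $X$, so \textsf{CDH} of $X$ yields a homeomorphism $h$ of $X$ with $h(D_1\cup E)=D_2\cup E$. Since $h$ preserves the partition $X=U\cup V$, intersecting this equality with $U$ gives $h(D_1)=D_2$, so $h|_U$ is the required homeomorphism of $U$; the argument for $V$ is symmetric. The only genuinely delicate point is the homeomorphism-preservation claim, which rests on the elementary but essential facts that clopen subsets inherit Baireness and meagerness, and that a nonempty space cannot be simultaneously Baire and meager.
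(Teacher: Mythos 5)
Your proposal is correct and follows exactly the paper's route: the first assertion via Theorem \ref{Fitzpatrick_Zhou} combined with Theorem \ref{CDH_implies_Baire_hom}, and the second via the very same definitions $U=\bigcup\{U_n: U_n\ \text{is Baire}\}$ and $V=\bigcup\{U_n: U_n\ \text{is meager}\}$. The paper's proof stops there and leaves the verifications implicit; your additional details --- notably the argument that every self-homeomorphism of $X$ preserves $U$ and $V$ (since a clopen set cannot be both nonempty meager and Baire), which yields that $U$ and $V$ are \textsf{CDH} --- are accurate fillings-in of steps the authors regarded as routine.
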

\begin{proof}
The first assertion is a direct consequence of Theorems \ref{Fitzpatrick_Zhou} and \ref{CDH_implies_Baire_hom}. To get the second assertion, we take
$$U=\bigcup\{U_n:U_n\; \text{is Baire}\}\quad \text{and}\quad V=\bigcup\{U_n:U_n\; \text{is meager}\}$$
\end{proof}

The following fact is probably known.
\begin{prp}\label{weak_not_Baire}
Let $E$ be an infinte-dimensional normed space and $F$ be a vector
subspace of the dual $E^\ast$ separating points of $E$. Then the
space $E$ equipped with the weak topology generated by $F$ is not a
Baire space.
\end{prp}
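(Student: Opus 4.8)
The plan is to exhibit $(E,\tau)$, where $\tau$ denotes the weak topology on $E$ generated by $F$, as a countable union of $\tau$-closed nowhere dense sets; since a nonempty space that is meager in itself is not Baire (the complements of the pieces are dense open with empty intersection), this is exactly what is needed. The key is to choose the pieces so that one never has to appeal to norm-boundedness or norm-closedness of balls.

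Concretely, I would work with the sets
$$A_n=\{x\in E:|f(x)|\le n\|f\|\ \text{for all}\ f\in F\},\qquad n\in\N,$$
and record three elementary facts. First, each $A_n$ is $\tau$-closed, being an intersection of the $\tau$-closed slabs $\{x:|f(x)|\le n\|f\|\}$ over $f\in F$. Second, the $A_n$ cover $E$: for any $x$ one has $|f(x)|\le\|f\|\,\|x\|$, so $x\in A_n$ as soon as $n\ge\|x\|$, whence $E=\bigcup_n A_n$. Third, no $A_n$ contains a nonzero linear subspace: if $x\ne 0$ then, because $F$ separates points, there is $f\in F$ with $f(x)\ne 0$, and then $|f(tx)|=|t|\,|f(x)|>n\|f\|$ for large $t$, so the line $\R x$ escapes $A_n$.

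The heart of the argument is that each $A_n$ has empty $\tau$-interior. Suppose not. Since $A_n$ is convex and symmetric, an open nonempty $U\subseteq A_n$ yields $\tfrac12(U-U)\subseteq A_n$, an open set containing $0$; hence $A_n$ is a $\tau$-neighborhood of the origin. By the very definition of the topology generated by $F$, it then contains a basic neighborhood $\{x:|f_i(x)|<\varepsilon,\ i\le k\}$ for some $f_1,\dots,f_k\in F$, and therefore the subspace $S=\bigcap_{i\le k}\ker f_i$. But $S$ has codimension at most $k$ in the infinite-dimensional space $E$, so $S\ne\{0\}$, contradicting the third fact above. Thus every $A_n$ is closed and nowhere dense, $E=\bigcup_n A_n$ is the desired meager decomposition, and $(E,\tau)$ is not Baire.

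The point I expect to require care is precisely the place where a naive proof would break: when $F$ is separating but not norming, the $\tau$-closure of the unit ball (which is exactly $A_1$) need not be norm-bounded, so one cannot argue via norm-compactness or norm-boundedness of balls as in Proposition~\ref{B_weak_general}. The sets $A_n$ sidestep this entirely, using only that $F$ separates points together with $\dim E=\infty$; these two hypotheses are all that is available, and they are exactly what the kernel $S=\bigcap_{i\le k}\ker f_i$ and the escaping line $\R x$ consume.
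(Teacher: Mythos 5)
Your proof is correct and is essentially the paper's own argument: your sets $A_n$ are exactly the dilates $nA$ of the paper's set $A=\bigcap\{\varphi^{-1}([-\|\varphi\|,\|\varphi\|]):\varphi\in F\}$, the covering $E=\bigcup_n A_n$ is the paper's observation that the unit ball lies in $A$, and the empty-interior step rests on the same two ingredients the paper cites (every $\tau$-neighborhood of $0$ contains a nontrivial vector subspace, and $F$ separates points). The only difference is that you spell out the reduction to a neighborhood of the origin via convexity and symmetry of $A_n$, a step the paper compresses into ``one can easily verify.''
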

\begin{proof} Let $A = \bigcap\{\varphi^{-1}([-\|\varphi\|,\|\varphi\|]): \varphi\in F\}$.
Clearly $A$ is closed in the weak topology generated by $F$. Using
the facts that $F$ separates the points of $E$ and each weak
neighborhood of $0$ contains a nontrivial vector subspace, one can
easily verify that $A$ has empty (weak) interior. Since the unit
ball of $E$ is contained in $A$, we have $E = \bigcup_{n\in \omega}
nA$. This shows that $E$ with the weak topology is meager,
hence not a Baire space.
\end{proof}

From this fact and Corollary \ref{CDH_implies_Baire_tvs} we immediately
obtain

\begin{cor}\label{weak_not_CDH}
Let $E$ be an infinite-dimensional normed space and $F$ be a linear
subspace of the dual $E^\ast$ separating points of $E$. Then the
space $E$ equipped with the weak topology generated by $F$ is not
\textsf{CDH}.
\end{cor}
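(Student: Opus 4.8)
The plan is to derive this corollary by straightforwardly combining the two immediately preceding results, since it is a contrapositive application rather than an independent argument. First I would record that the space in question, $E$ equipped with the weak topology generated by $F$, is a topological vector space: this topology is the initial topology with respect to the family $\{\varphi:\varphi\in F\}$ of linear functionals, under which addition and scalar multiplication are continuous, and it is Hausdorff precisely because $F$ separates the points of $E$. Hence $(E,w_F)$ is a genuine topological vector space in the Tikhonov sense, and so it falls within the scope of Corollary \ref{CDH_implies_Baire_tvs}.

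Next I would invoke Proposition \ref{weak_not_Baire} verbatim. Its hypotheses — that $E$ be an infinite-dimensional normed space and that $F$ be a linear subspace of $E^\ast$ separating the points of $E$ — coincide exactly with the assumptions made here. The proposition therefore yields that $(E,w_F)$ is not a Baire space; in fact it is meager, being the countable union $\bigcup_{n\in\omega} nA$ of dilates of the closed nowhere-dense set $A=\bigcap\{\varphi^{-1}([-\|\varphi\|,\|\varphi\|]):\varphi\in F\}$.

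Finally I would apply the contrapositive of Corollary \ref{CDH_implies_Baire_tvs}. That corollary asserts that every \textsf{CDH} topological vector space is a Baire space, so any topological vector space failing to be Baire cannot be \textsf{CDH}. Since we have just seen that $(E,w_F)$ is a topological vector space which is not Baire, it follows at once that $(E,w_F)$ is not \textsf{CDH}, which is the desired conclusion. I expect no real obstacle in this argument, as both ingredients are already established; the only point meriting a moment's verification is that the separation condition on $F$ indeed makes $(E,w_F)$ Hausdorff, so that Corollary \ref{CDH_implies_Baire_tvs} genuinely applies.
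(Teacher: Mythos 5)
Your proposal is correct and is exactly the paper's argument: the authors obtain Corollary \ref{weak_not_CDH} immediately by combining Proposition \ref{weak_not_Baire} with the contrapositive of Corollary \ref{CDH_implies_Baire_tvs}. Your extra remark verifying that the separation condition on $F$ makes $(E,w_F)$ a Hausdorff (hence Tikhonov) topological vector space is a sensible, if routine, point that the paper leaves implicit.
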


Corollary \ref{weak_not_CDH} does not extend to non-normed vector
spaces. For the product space $\mathbb{R}^\omega$, which is
\textsf{CDH}, its weak topology coincides with the product topology.
\section{Cartesian products with \textsf{CDH}}

In the paper \cite[Theorem 3.1]{HZA} it was proved that, for a separable metrizable space $X$, if the product $X^\omega$ is \textsf{CDH}, then $X$ is a Baire space.
Using our Lemma \ref{lemma_meager_not_CDH} we can generalize this result as follows

\begin{thm}\label{product_Baire}
For a separable space $X$ (not necessarily metrizable) and an infinite cardinal $\kappa$, if $X^\kappa$ is \textsf{CDH}, then $X$ and $X^\kappa$ are Baire spaces.
\end{thm}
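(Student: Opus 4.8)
The plan is to derive both conclusions from the two results already in hand, the Fitzpatrick--Zhou decomposition (Theorem \ref{Fitzpatrick_Zhou}) and the homogeneous case (Theorem \ref{CDH_implies_Baire_hom}), after first isolating one elementary geometric fact about the power $X^\kappa$. First I would dispose of the degenerate case: if $|X|\le 1$ then $X$ and $X^\kappa$ are singletons (or empty), so there is nothing to prove; thus I assume $X$ has two distinct points $a\neq b$. Since $X$ is Tikhonov, the two-point subspace $\{a,b\}$ is discrete, so for every infinite $J\subseteq\kappa$ the subspace of $X^\kappa$ consisting of points that are constantly $a$ off $J$ and range over $\{a,b\}$ on $J$ carries the topology of $2^{J}$ and hence contains a copy of the Cantor set $2^\omega$. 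The key observation I would record is: \emph{every nonempty open subset $O$ of $X^\kappa$ contains a copy of the Cantor set}. Indeed, $O$ contains a basic box restricting only finitely many coordinates $S\subseteq\kappa$; fixing admissible values on those coordinates leaves inside $O$ a subspace homeomorphic to $X^{\kappa\setminus S}$, and since $\kappa\setminus S$ is infinite this subspace contains $2^\omega$ by the previous sentence.

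Next I would show that $X^\kappa$ is a Baire space. By Theorem \ref{Fitzpatrick_Zhou} I write $X^\kappa=\bigcup_n U_n$ as a discrete family of clopen, homogeneous, \textsf{CDH} subspaces. Each nonempty $U_n$ is open in $X^\kappa$, so by the key observation it contains a copy of the Cantor set; being in addition homogeneous and \textsf{CDH}, it is a Baire space by Theorem \ref{CDH_implies_Baire_hom}. (Equivalently one may argue through Lemma \ref{lemma_meager_not_CDH}: $U_n$ inherits a sequentially crowded countable dense subset and contains a copy of the Cantor set, so were it meager it would fail to be \textsf{CDH}; being non-meager and homogeneous, it is Baire.) It then remains to reassemble: if $W_m$ are dense open in $X^\kappa$, then each $W_m\cap U_n$ is dense open in $U_n$, so $\bigcap_m(W_m\cap U_n)$ is dense in $U_n$ by Baireness of $U_n$, and the union of these over $n$ is dense in $X^\kappa$ because the $U_n$ are clopen and cover $X^\kappa$. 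Hence $\bigcap_m W_m$ is dense and $X^\kappa$ is Baire.

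Finally, to conclude that $X$ is Baire I would use the first-coordinate projection $\pi\colon X^\kappa\to X$, which is continuous, surjective, and open. A continuous open surjection maps Baire onto Baire: for dense open $V_m\subseteq X$ the sets $\pi^{-1}(V_m)$ are open and dense in $X^\kappa$ (density uses that $\pi$ is open and onto), so $\bigcap_m\pi^{-1}(V_m)$ is dense by the previous paragraph, and for any nonempty open $U\subseteq X$ the set $\pi^{-1}(U)$ meets this dense intersection, forcing $U\cap\bigcap_m V_m\neq\emptyset$. Thus $X$ is Baire.

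The one point genuinely requiring care, and the step I expect to be the main obstacle, is the passage from ``non-meager'' to genuinely ``Baire'' for $X^\kappa$. Lemma \ref{lemma_meager_not_CDH} only yields non-meagerness, and a naive attempt to run it on open boxes fails because open subsets of a \textsf{CDH} space need not be \textsf{CDH}. This is exactly what the clopen homogeneous decomposition of Theorem \ref{Fitzpatrick_Zhou} repairs: homogeneity upgrades non-meagerness to Baireness on each piece (each piece being \textsf{CDH} with a Cantor set), while clopenness permits these local conclusions to be glued back into a global Baire statement.
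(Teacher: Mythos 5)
Your argument is correct, but it assembles the proof along a genuinely different route than the paper does. The paper's proof never invokes Theorem \ref{Fitzpatrick_Zhou} or Theorem \ref{CDH_implies_Baire_hom} here: it applies Lemma \ref{lemma_meager_not_CDH} directly to $X^\kappa$ (after building a sequentially crowded countable dense subset by running a copy of the Cantor set through each point of a countable dense set, via Lemma \ref{copies_of_Cantor}) to conclude that $X^\kappa$ is non-meager, and then upgrades non-meager to Baire using Lemma \ref{dichotomy_for_products}, a meager-or-Baire dichotomy for infinite powers proved with the Banach Category Theorem. You correctly identified the central obstacle---$X^\kappa$ need not be homogeneous, so neither the homogeneous dichotomy nor Theorem \ref{CDH_implies_Baire_hom} applies to it directly---but you resolve it differently: where the paper substitutes the product-structure dichotomy for homogeneity, you restore homogeneity piecewise via the Fitzpatrick--Zhou decomposition, apply Theorem \ref{CDH_implies_Baire_hom} to each clopen piece (your localized observation that \emph{every} nonempty open subset of $X^\kappa$ contains a Cantor set is exactly what makes this work; note it is strictly stronger than Lemma \ref{copies_of_Cantor}, which places a Cantor set through each point but not inside a prescribed neighborhood), and glue. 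Both routes ultimately rest on Lemma \ref{lemma_meager_not_CDH}, since it also underlies Theorem \ref{CDH_implies_Baire_hom}, and both finish identically with the open projection onto $X$. What your version buys: it dispenses with Lemma \ref{dichotomy_for_products} and the Banach Category Theorem altogether, and it in fact proves the more general statement that any \textsf{CDH} space in which every nonempty open set contains a copy of the Cantor set is Baire, with the product structure entering only through your localized Cantor-set observation; it is also stylistically consonant with the paper, which deploys Theorem \ref{Fitzpatrick_Zhou} in just this way in the proof of Theorem \ref{product_Cantor_set}. What the paper's version buys: it is self-contained relative to Theorem \ref{Fitzpatrick_Zhou} (an external result of Fitzpatrick and Zhou), at the cost of invoking the Banach Category Theorem. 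One cosmetic remark: in your gluing step the countability of the family is not even needed, since a disjoint clopen cover by Baire subspaces always yields a Baire space, as your density argument in fact shows.
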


In the proof we will use two additional easy lemmas, the first one is probably known.

\begin{lem}\label{dichotomy_for_products}
For any space $X$ and an infinite cardinal $\kappa$, the product $X^\kappa$ is either meager or a Baire space.
\end{lem}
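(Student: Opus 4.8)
The plan is to prove the dichotomy by establishing one implication in contrapositive form: since a meager space is never Baire, it suffices to show that if $X^\kappa$ fails to be a Baire space, then it is meager. The whole argument rests on the self-similarity of infinite powers. Writing $Z=X^\kappa$ and using $\kappa\cdot\omega=\kappa$, any partition of $\kappa$ into $\omega$ pieces each of cardinality $\kappa$ yields a homeomorphism $Z\cong\prod_{n\in\omega}W_n$ in which every block $W_n$ is homeomorphic to $Z$. I would exploit both this block decomposition and the fact that permuting blocks (together with fixed identifications $W_0\cong W_n$) produces self-homeomorphisms of $Z$, which preserve meagerness.

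First I would record the standard reformulation of failure of Baireness: if $Z$ is not Baire, some sequence of dense open sets has a non-dense intersection, so there is a nonempty open set $W\subseteq Z$ that is meager in $Z$ (it is covered by countably many closed nowhere dense subsets of $Z$). Shrinking $W$ to a basic open set of the product and regrouping the coordinates so that the finitely many constrained ones all lie in a single block, I may assume that $Z\cong\prod_{n\in\omega}W_n$ with each $W_n$ homeomorphic to $Z$, and that the meager nonempty open set has the form $U=\{z : z_0\in U_0\}$ for some nonempty open $U_0\subseteq W_0$.

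Next I would spread $U$ across all coordinates. Fixing homeomorphisms $\phi_n\colon W_0\to W_n$ and setting $U_n=\phi_n(U_0)$, let $U^{(n)}=\{z : z_n\in U_n\}$. The involution exchanging the $0$th and $n$th blocks (via $\phi_n$) is a self-homeomorphism of $Z$ carrying $U^{(0)}=U$ onto $U^{(n)}$, so each $U^{(n)}$ is meager, and therefore so is $\bigcup_n U^{(n)}$. Its complement is the closed product $F=\prod_n(W_n\setminus U_n)$. The crucial point is that $F$ is nowhere dense: a nonempty basic open set could be contained in $F$ only if $W_n\setminus U_n=W_n$ for all but finitely many $n$, yet $U_n\neq\emptyset$ for every $n$, so no coordinate factor equals the whole $W_n$; hence $F$ has empty interior. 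Consequently $Z=\bigl(\bigcup_n U^{(n)}\bigr)\cup F$ displays $Z$ as the union of a meager set and a closed nowhere dense set, so $Z$ is meager, as required.

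The main obstacle is not any single computation but the bookkeeping: ensuring that after regrouping the coordinates the constrained open set really sits inside one block, that the block-swap maps are genuine self-homeomorphisms of $X^\kappa$ respecting the fixed identifications, and—most importantly—verifying that the residual closed set $F$ is nowhere dense rather than merely proper. The infinite-dimensionality enters exactly here: it is the presence of infinitely many blocks, each missing a nonempty open piece, that forces $F$ to have empty interior, which is precisely what can fail for finite products.
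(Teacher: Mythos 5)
Your proof is correct, but it runs along a genuinely different track than the paper's. The paper argues from the other side of the dichotomy: assuming $X^\kappa$ is \emph{not meager}, it invokes the Banach Category Theorem (Haworth--McCoy) to produce a nonempty open subspace that is Baire, shrinks it to a basic open set $\pi_A^{-1}(V)$ with $A\subseteq\kappa$ finite, and then projects along the open continuous map $\pi_{\kappa\setminus A}$, using the cited fact that an open continuous image of a Baire space is Baire; since $X^{\kappa\setminus A}\cong X^\kappa$, the whole power is Baire. You instead assume $X^\kappa$ is \emph{not Baire}, extract a nonempty open meager box, and exploit the self-similarity $X^\kappa\cong\prod_{n\in\omega}W_n$ with each block $W_n\cong X^\kappa$: block-swap homeomorphisms spread the meager set $\{z: z_0\in U_0\}$ to every coordinate, and the residual closed set $F=\prod_n(W_n\setminus U_n)$ is nowhere dense because a nonempty basic open set constrains only finitely many blocks while every $U_n$ is nonempty --- so $X^\kappa$ is meager. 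All the steps you flag as delicate do go through: a non-dense intersection of dense open sets yields an open set covered by the closed nowhere dense complements; the partition of $\kappa$ into $\omega$ pieces of size $\kappa$ (using $\kappa\cdot\omega=\kappa$) can be chosen so the finitely many constrained coordinates lie in one block; and the swaps $h(z)_0=\phi_n^{-1}(z_n)$, $h(z)_n=\phi_n(z_0)$ are genuine self-homeomorphisms carrying $U^{(0)}$ onto $U^{(n)}$. Both proofs ultimately rest on $X^\kappa$ being homeomorphic to a power of itself, but the trade-off differs: the paper's argument is shorter at the price of two external citations, while yours is completely self-contained and elementary, essentially the classical ``homogeneity spreads a meager open set over the whole space'' argument adapted to infinite powers, at the cost of the coordinate bookkeeping you describe.
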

\begin{proof} Suppose that $X^\kappa$ is not meager.
Then the Banach Category Theorem (see \cite[Theorem 1.6]{HM}) gives us a nonempty open subspace $U\subset X^\kappa$ which, in its relative topology, is a Baire space.
Since an open subset of Baire space is again a Baire space, without loss of generality we can assume that $U$ is a basic open set, i.e.,
$U = \pi_A^{-1}(V)$, where $A$ is a finite subset of $\kappa$, $V$ a nonempty open subset of $X^A$, and $\pi_A: X^\kappa\to X^A$ is the projection.
Then, for the open continuous projection $\pi_{\kappa\setminus A}: X^\kappa\to X^{\kappa\setminus A}$ we have
$\pi_{\kappa\setminus A}(U) = X^{\kappa\setminus A}$, hence $X^{\kappa\setminus A}$ is a Baire space,
cf. \cite[Corollary 4.2]{HM}.
Obviously, $X^{\kappa\setminus A}$ is homeomorphic to $X^\kappa$.
\end{proof}

\begin{lem}\label{copies_of_Cantor}
Let $X$ be a space of cardinality $|X|>1$, and $\kappa$ be an infinite cardinal. Then each point  $x\in X^\kappa$ is contained in a copy of the Cantor set $C\subset  X^\kappa$.
\end{lem}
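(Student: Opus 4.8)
The plan is to produce the Cantor set by varying only countably many coordinates of the given point $x$, each over a two-point set. First I would fix $x=(x_\alpha)_{\alpha\in\kappa}\in X^\kappa$ and, using that $\kappa$ is infinite, choose a countably infinite subset $S=\{\alpha_n:n\in\omega\}\subseteq\kappa$. Since $|X|>1$, for each $\alpha\in S$ I can pick a point $y_\alpha\in X$ with $y_\alpha\neq x_\alpha$. Because $X$ is Tikhonov, hence Hausdorff, the two-point subspace $\{x_\alpha,y_\alpha\}\subseteq X$ is discrete; consequently the assignment $0\mapsto x_\alpha$, $1\mapsto y_\alpha$ is a homeomorphism of the two-point discrete space onto $\{x_\alpha,y_\alpha\}$.

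Next I would define a map $\Phi\colon\{0,1\}^S\to X^\kappa$ from the Cantor set $\{0,1\}^S$ as follows: $\Phi(t)$ agrees with $x$ on every coordinate $\beta\in\kappa\setminus S$, and for $\alpha\in S$ the $\alpha$-th coordinate of $\Phi(t)$ equals $x_\alpha$ if $t_\alpha=0$ and equals $y_\alpha$ if $t_\alpha=1$. Checking continuity coordinatewise, the composition $\pi_\beta\circ\Phi$ is constant for $\beta\notin S$, while for $\alpha\in S$ it factors as the $\alpha$-th projection $\{0,1\}^S\to\{0,1\}$ followed by the two-point homeomorphism above; hence $\Phi$ is continuous. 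It is injective, since distinct elements of $\{0,1\}^S$ differ in some coordinate $\alpha\in S$, where $\Phi$ takes the distinct values $x_\alpha$ and $y_\alpha$.

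Finally I would invoke the standard fact that a continuous injection from a compact space into a Hausdorff space is an embedding: the domain $\{0,1\}^S$ is compact (indeed a copy of the Cantor set, as $S$ is countably infinite), and $X^\kappa$ is Hausdorff, so $\Phi$ is a homeomorphism onto its image $C=\Phi(\{0,1\}^S)$, a copy of the Cantor set inside $X^\kappa$. Since the constant sequence $\mathbf{0}\in\{0,1\}^S$ is mapped by $\Phi$ to $x$, we obtain $x\in C$, which is exactly the assertion.

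I do not expect any substantial obstacle here. The only two points that deserve a line of care are the observation that a two-point subspace of a Hausdorff space carries the discrete topology (so that $\{x_\alpha,y_\alpha\}$ really is a copy of the two-point factor of the Cantor set) and the appeal to the compact-to-Hausdorff criterion that upgrades the continuous injection $\Phi$ to an embedding; both are routine, so the argument should be short and self-contained.
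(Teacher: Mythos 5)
Your proposal is correct and is essentially the paper's own argument: the paper picks a two-point set $C_n\ni x_n$ for each of countably many coordinates and singletons $\{x_\alpha\}$ elsewhere, declaring the product $C=\prod_{\alpha<\kappa}C_\alpha$ to be the required copy of the Cantor set, and your map $\Phi$ merely makes explicit the homeomorphism $\{0,1\}^S\to C$ that the paper leaves as ``clear.'' The extra details you supply (two-point subspaces of a Hausdorff space are discrete; a continuous injection from a compact space into a Hausdorff space is an embedding) are exactly the routine verifications implicit in the paper's one-line proof.
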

\begin{proof} Let $x = (x_\alpha)_{\alpha<\kappa}$. For each $n\in\omega$, pick a subset $C_n\subset X$ of size $2$, such that $x_n\in C_n$. For $\omega\le \alpha <\kappa$ put $C_\alpha = \{x_\alpha\}$. It is clear that the set $C = \prod_{\alpha<\kappa} C_\alpha$ is as required.
\end{proof}

\begin{proof}[Proof of Theorem \ref{product_Baire}]
Assume that the product $X^\kappa$ is \textsf{CDH}. Clearly, we can assume that $|X|>1$. We will show that $X^\kappa$ contains a sequentially crowded countable dense subset $D$. Let $A$ be any countable dense subset of $X^\kappa$. For each $a\in A$, use Lemma \ref{copies_of_Cantor} to find a copy $C_a\subset X^\kappa$ of the Cantor set, containing $a$.
Let $D_a$ be a countable dense subset of $C_a$. One can easily verify that the set $D = \bigcup\{D_a: a\in A\}$ has the required properties.

Now, by Lemmas \ref{lemma_meager_not_CDH}  and \ref{copies_of_Cantor}, $X^\kappa$ is not meager. Therefore, Lemma \ref{dichotomy_for_products} implies that $X^\kappa$ is a Baire space. Since $X$ is an image of $X^\kappa$ under an open continuous projection, it is also a Baire space.
\end{proof} 

R.\ Hern\'andez-Guti\'errez established in \cite{HG1} the following result concerning Cartesian products with \textsf{CDH} (we are grateful to him for turning our attention to this result):

\begin{thm}\cite[Theorem 2.3]{HG1}\label{product_HG}
Let $X$ and $Y$ be two crowded spaces of countable $\pi$-weight. If $X\times Y$ is \textsf{CDH}, then $X$ contains a copy of the Cantor set if and only if
$Y$ contains a
copy of the Cantor set.
\end{thm}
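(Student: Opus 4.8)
The plan is to prove one implication and obtain the other by interchanging the roles of $X$ and $Y$, since the hypotheses are symmetric. So I would show: if $W:=X\times Y$ is \textsf{CDH} and $X$ contains a copy of the Cantor set, then $Y$ contains one too. I argue by contradiction, assuming that $X$ contains a copy $C$ of the Cantor set, that $Y$ contains no copy of the Cantor set, and that $W$ is \textsf{CDH}; recall that $W$ is then separable and crowded.

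The geometric heart of the argument, and the only place where the hypothesis on $Y$ is used, is the following observation. Let $Z\subseteq W$ be an arbitrary copy of the Cantor set and consider the projection $\pi_Y:W\to Y$. The image $\pi_Y(Z)$ is a compact subset of $Y$, so it contains no copy of the Cantor set; consequently it is scattered, because every nonempty perfect compact Hausdorff space contains a copy of the Cantor set. Choosing an isolated point $y_0$ of $\pi_Y(Z)$, the set $O:=Z\cap(X\times\{y_0\})=(\pi_Y|_Z)^{-1}(y_0)$ is nonempty and relatively open in $Z$, and it lies in the single horizontal slice $X\times\{y_0\}$. Being a nonempty open subset of the perfect set $Z$, the set $O$ is infinite. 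Thus \emph{every copy of the Cantor set in $W$ has a nonempty relatively open piece contained in one horizontal slice.}

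Next I would produce two countable dense subsets of $W$ that any \textsf{CDH} homeomorphism must identify, yet which behave incompatibly with respect to Cantor sets. First, fixing $y^\ast\in Y$, the set $C^\ast:=C\times\{y^\ast\}$ is a copy of the Cantor set in $W$, so Lemma \ref{dense_notgdelta} yields a countable dense $B\subseteq W$ with $B\cap C^\ast$ dense in $C^\ast$. Second, using the countable $\pi$-weight of $W$, fix a countable $\pi$-base $\{P_n:n\in\omega\}$ and build a countable dense $D=\{(a_n,b_n):n\in\omega\}$ by choosing $(a_n,b_n)\in P_n$ with $b_n\notin\{b_m:m<n\}$; this is possible because $\pi_Y$ is open and $Y$ is crowded, so each $\pi_Y(P_n)$ is a nonempty, hence infinite, open subset of $Y$. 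Since $D$ meets every $\pi$-base element it is dense, and by construction its points have pairwise distinct $Y$-coordinates, so $D$ meets each slice $X\times\{y\}$ in at most one point. Now $W$ being \textsf{CDH} gives a homeomorphism $h:W\to W$ with $h(B)=D$, whence $Z:=h(C^\ast)$ is a copy of the Cantor set and $D\cap Z=h(B\cap C^\ast)$ is dense in $Z$. But by the observation above $Z$ has a nonempty, hence infinite, relatively open piece $O$ inside a single slice $X\times\{y_0\}$, and $D$ meets that slice in at most one point; so $D\cap O$ has at most one point and cannot be dense in $O$, contradicting the density of $D\cap Z$ in $Z$.

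The main obstacle will be the observation of the second paragraph: the whole argument rests on converting ``$Y$ has no Cantor set'' into the rigidity statement that every Cantor set of $W$ is, locally, horizontal. Once this is secured, the remaining work is light, and the construction of a dense set $D$ with pairwise distinct $Y$-coordinates (exactly where countable $\pi$-weight and crowdedness are spent) turns the \textsf{CDH} hypothesis against itself in the manner of Lemma \ref{lemma_meager_not_CDH}.
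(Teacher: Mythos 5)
Your proof is correct up to one repairable misstatement, and it takes a genuinely different route from the paper's. The paper does not prove Theorem \ref{product_HG} directly: the statement is quoted from Hern\'andez-Guti\'errez, and the paper instead proves the stronger Theorem \ref{product_Cantor_set}, in which the countable $\pi$-weight hypothesis is removed. There, the countable dense set with injective $Y$-projection (your $D$) is produced by the inductive convergent-sequence construction of Lemma \ref{lem_1-1_projection}, which in turn requires first passing, via the Fitzpatrick--Zhou decomposition (Theorem \ref{Fitzpatrick_Zhou}), to a clopen homogeneous \textsf{CDH} piece $U$ meeting the Cantor set, so that Lemma \ref{seq_crowded} supplies sequentially crowded dense sets. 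Your $\pi$-base enumeration replaces all of that machinery, at the cost of using exactly the hypothesis the paper is designed to eliminate; for the theorem as stated, your construction is noticeably simpler and also lets you work in the whole product rather than in a homogeneous clopen piece. The endgames differ as well: the paper argues directly that $\pi_Y(h(C))$, which contains the crowded set $\pi_Y(F)$ with $\pi_Y\upharpoonright F$ one-to-one, is a crowded compact metrizable subspace of $Y$ and hence contains a Cantor set; you argue by contradiction, converting ``$Y$ has no Cantor set'' into scatteredness of $\pi_Y(Z)$, extracting a nonempty open horizontal piece of $Z$, and clashing this with the fact that $D$ meets each slice in at most one point. Both endgames are sound, and they are essentially dual formulations of the same idea.

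The one inaccuracy: you justify the scatteredness of $\pi_Y(Z)$ by the claim that every nonempty perfect compact \emph{Hausdorff} space contains a copy of the Cantor set. That claim is false in general --- $\omega^{*}=\beta\omega\setminus\omega$ is compact, crowded, and contains no nontrivial convergent sequence, hence no copy of the Cantor set. Your step nevertheless survives, because $\pi_Y(Z)$ is a continuous Hausdorff image of the metrizable compactum $Z$ and is therefore itself compact metrizable; for compact \emph{metrizable} spaces the dichotomy does hold (by Cantor--Bendixson, such a space is either countable, hence scattered, or has a nonempty perfect kernel, which contains a Cantor set). Insert this metrizability remark and the argument is complete.
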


Using methods developed above, we can generalize this theorem by eliminating the assumption that spaces under consideration have countable $\pi$-weight
(see \cite[1.4]{Tk} for the definition of $\pi$-weight),
i.e. we have:

\begin{thm}\label{product_Cantor_set}
Let $X$ and $Y$ be two crowded spaces. If $X\times Y$ is \textsf{CDH}, then $X$ contains a copy of the Cantor set if and only if $Y$ contains a
copy of the Cantor set. 
\end{thm}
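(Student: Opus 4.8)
The plan is to establish the contrapositive in a symmetric form: assuming $X\times Y$ is \textsf{CDH} with both factors crowded, I will show it is impossible for exactly one factor to contain a copy of the Cantor set. By the symmetry of the roles of $X$ and $Y$ it suffices to suppose that $X$ contains a copy $C_X$ of the Cantor set while $Y$ contains none, and to derive a contradiction. The structural input I would isolate first is a \emph{slicing lemma}: if $Y$ contains no copy of the Cantor set, then for every copy $C'$ of the Cantor set in $X\times Y$ the projection $\pi_Y(C')$ is a compact metrizable subspace of $Y$ containing no Cantor set, hence countable; writing $C'$ as the countable union of its closed slices $C'\cap(X\times\{y\})$ and applying the Baire Category Theorem inside $C'$, one slice has nonempty interior in $C'$ and so contains a relatively clopen sub-copy $O'$ of the Cantor set lying in a single slice $X\times\{y_0\}$. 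This is the precise way in which ``$Y$ has no Cantor set'' will be used.

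Next I would localize. Since $X\times Y$ is \textsf{CDH}, Theorem \ref{Fitzpatrick_Zhou} presents it as a discrete union of clopen homogeneous \textsf{CDH} subspaces, and because $C_X\times\{y_0\}$ is a Cantor set in the product, one piece $U$ is a homogeneous \textsf{CDH} space containing a Cantor set. By homogeneity every point of $U$ then lies in a copy of the Cantor set, so, exactly as in the proof of Theorem \ref{product_Baire}, $U$ carries a sequentially crowded countable dense subset. All remaining work is carried out inside $U$, and any contradiction with $U$ being \textsf{CDH} (equivalently \textsf{US}) finishes the proof.

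I would then split on the behaviour of the points of $Y$. \emph{Case A:} some $y^\ast\in\pi_Y(U)$ is not the limit of any nontrivial sequence in $Y$. Then every sequence in $U$ converging to a point of the slice $X\times\{y^\ast\}$ is eventually inside that slice, so, deleting from a countable dense subset all of its points in the nowhere dense slice and adding back a single point $(x^\ast,y^\ast)\in U$, I obtain a countable dense subset of $U$ that fails to be sequentially crowded. Combined with the sequentially crowded dense subset above, this shows $U$ is not \textsf{US}, contradicting \textsf{CDH}. \emph{Case B:} every $y\in\pi_Y(U)$ is a limit of a nontrivial sequence in $Y$. Here I mimic the construction in Lemma \ref{lemma_meager_not_CDH}: fixing a countable dense $D_0=\{d_n=(a_n,b_n)\}\subseteq U$, I attach to each $d_n$ a sequence $(a_n,t^n_k)_k\to d_n$ with $t^n_k\to b_n$, choosing the countably many second coordinates $t^n_k$ recursively so that they are \emph{globally distinct}. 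The union $D_2$ is countable, is dense because $\overline{D_2}\supseteq D_0$, and meets each slice $X\times\{y\}$ in at most one point. By the slicing lemma no copy of the Cantor set has $D_2$ dense in it, whereas a second dense set $D_1$ chosen via Lemma \ref{dense_notgdelta} to be dense on some Cantor set $C'\subseteq U$ does have this property; since this property is preserved by homeomorphisms of $U$, no homeomorphism of $U$ can carry $D_1$ onto $D_2$, contradicting that $U$ is \textsf{CDH}.

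The main obstacle is the construction of $D_2$ in Case B, and it is precisely the point at which the countable $\pi$-weight hypothesis used by Hern\'andez-Guti\'errez in Theorem \ref{product_HG} must be dispensed with. Without a countable $\pi$-base one cannot build a controlled dense set by enumerating basic open sets; the device that replaces this is to let \emph{density come for free} from $\overline{D_2}\supseteq D_0$, as in Lemma \ref{lemma_meager_not_CDH}, while the sequential approachability of the $Y$-coordinates (the Case B hypothesis) supplies enough freedom to keep all chosen second coordinates distinct. Checking that the recursion can simultaneously keep each attached sequence convergent to its target $d_n$ and keep all second coordinates globally distinct — using that a tail of a fixed reference sequence avoids any prescribed finite set — is the only genuinely delicate step; everything else is routine bookkeeping.
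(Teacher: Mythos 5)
Your argument is correct, and it departs from the paper's proof at two substantive points, so a comparison is worthwhile. Both proofs share the same skeleton: localize via Theorem \ref{Fitzpatrick_Zhou} to a clopen homogeneous \textsf{CDH} piece $U$ whose intersection with a Cantor set is again a Cantor set, take a countable dense set that is dense on that Cantor set (Lemma \ref{dense_notgdelta}), build a second countable dense set meeting each slice $X\times\{y\}$ in at most one point, and let a \textsf{CDH} homeomorphism carry one onto the other. The first difference is how the injectively projecting dense set is obtained: the paper does it in one stroke (Lemma \ref{lem_1-1_projection}), using that \textsf{CDH} forces every countable dense subset of $U$ to be sequentially crowded, so each $d_n$ is approximated by a sequence taken from the dense set minus the closed nowhere dense union of fibers already used; this makes your Case A/Case B dichotomy unnecessary, since a sequence converging to a point from outside its fiber witnesses that its $Y$-coordinate is a nontrivial sequential limit --- in other words, your Case A is precisely the configuration that \textsf{CDH} forbids, and your \textsf{US}-violation argument there is a correct verification of this. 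Your Case B construction is more hands-on than Lemma \ref{lem_1-1_projection}: freezing the first coordinates $a_n$ lets you manufacture the convergent sequences directly from a reference sequence at each $b_n$; the one point you gloss is that you should first pass to an injective subsequence of each reference sequence (possible because in a Hausdorff space a sequence of points distinct from its limit assumes no value infinitely often), after which the ``tail avoids any prescribed finite set'' step is literally true. The second difference is the endgame: you argue by contradiction through your slicing lemma (the perfect set property of compact metrizable spaces plus Baire category inside the Cantor set), which requires assuming $Y$ contains no Cantor set, whereas the paper concludes positively with no such assumption --- after the homeomorphism, $F=E\cap h(C)$ is crowded and projects injectively, so $\pi_Y(h(C))$ is a crowded compact metrizable subspace of $Y$ and hence contains a Cantor set outright. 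The paper's direct version is slightly cleaner and is what yields the immediate generalization to Theorem \ref{product_Cantor_2}; your version trades that for a more elementary, self-contained construction in Case B.
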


We will need the following lemma.

\begin{lem}\label{lem_1-1_projection}
Let $U$ be an open homogeneous \textsf{CDH} subset of the product $X\times Y$ of two spaces $X$ and $Y$. If $Y$ is crowded and $U$
contains a nontrivial convergent sequence,
then $U$ contains a countable dense subset
$E$ such that for every $y\in Y$ the intersection $E\cap (X\times \{y\})$ has at most one element.
\end{lem}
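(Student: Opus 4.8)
The plan is to build the required set $E$ as a countable union of ``vertical'' sequences, one accumulating at each point of a conveniently chosen dense subset of $U$, while keeping all second coordinates pairwise distinct. First I would fix the tools. Since $U$ is open, separable (being \textsf{CDH}), homogeneous, and carries a nontrivial convergent sequence, Lemma \ref{seq_crowded} furnishes a sequentially crowded countable dense subset $D=\{d_n:n\in\omega\}\subseteq U$; write $d_n=(x_n,y_n)$. The second ingredient is the observation that, because $Y$ is crowded, every slice $X\times\{y\}$ is closed and nowhere dense in $U$: a basic box $V\times W\subseteq U$ meets it in $V\times\{y\}$, which is not open in $V\times W$ since $\{y\}$ is not open in the crowded set $W$. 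Hence every finite union of slices is nowhere dense, so inside any neighborhood of any point we may always find points lying off finitely many prescribed slices.

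Next I would carry out the construction. Using sequential crowdedness, for each $n$ choose a sequence $s^n_k=(a^n_k,b^n_k)\in D\setminus\{d_n\}$ with $s^n_k\to d_n$, so that $a^n_k\to x_n$ in $X$ and $b^n_k\to y_n$ in $Y$. I then enumerate all pairs $(n,k)$ in a single $\omega$-sequence and define the points of $E$ one at a time. For the pair $(n,k)$, openness of $U$ gives a basic box $V^n_k\times W^n_k\subseteq U$ containing $s^n_k$; since $Y$ is crowded, $W^n_k$ is infinite, so I may pick $c^n_k\in W^n_k$ distinct from the finitely many second coordinates already used and set $e^n_k=(x_n,c^n_k)$. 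Note I keep the first coordinate equal to $x_n$: because $(x_n,y_n)\in U$ and $U$ is open, there is a fixed box $V_0\times W_0\ni d_n$ inside $U$, and by shrinking I arrange $c^n_k\in W_0$, so that $e^n_k\in U$. By construction the second coordinates of the $e^n_k$ are pairwise distinct, so $E=\{e^n_k:n,k\in\omega\}$ meets each slice $X\times\{y\}$ in at most one point, which is the desired injectivity of the projection.

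It remains to see that $E$ is dense, for which it suffices that $d_n\in\overline E$ for every $n$, since $\overline D=U$. Because the first coordinate of each $e^n_k$ is exactly $x_n$, the product neighborhood base reduces this to the purely one-coordinate statement $y_n\in\overline{\{c^n_k:k\in\omega\}}$ in $Y$: if every neighborhood $W$ of $y_n$ contains some $c^n_k$, then the box $V\times W$ contains $e^n_k=(x_n,c^n_k)$, and such boxes form a base at $d_n$. The \emph{main obstacle} is to secure this accumulation while the $c^n_k$ are forced to be fresh. I would attack it by slaving the perturbation to the given convergent sequence: using regularity of $Y$ to separate $b^n_k$ from $y_n$ and from the earlier $b^n_j$, I shrink each $W^n_k$ so that it sits inside ever smaller neighborhoods of $y_n$ as $k$ grows, whence $c^n_k\to y_n$ and therefore $e^n_k\to d_n$. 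The delicate part is that $y_n$ may have uncountable character in $Y$, so a single perturbed sequence need not converge automatically; this is exactly the point at which the convergent-sequence hypothesis on $U$ (propagated to $y_n$ through $b^n_k\to y_n$) together with crowdedness of $Y$ does the real work, by supplying, inside each $W^n_k$, infinitely many fresh candidates that can be steered into the tail neighborhoods furnished by the scaffold $b^n_k\to y_n$. Once $c^n_k\to y_n$ is in hand, density of $E$ follows, and combined with the distinctness of second coordinates this proves the lemma.
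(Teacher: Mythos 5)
There is a genuine gap, and it sits exactly where you flag the ``delicate part'': you need $c^n_k\to y_n$ in $Y$, where the $c^n_k$ are freshly chosen points of the open sets $W^n_k$, and nothing in your hypotheses can deliver this. Crowdedness of $Y$ supplies infinitely many points in every open set but no convergence whatsoever: a crowded Tychonoff space may contain no nontrivial convergent sequences at all (crowded subspaces of $\beta\omega\setminus\omega$ are standard examples), and at a point of uncountable character a sequence chosen from ``ever smaller'' members of a \emph{countable} family of neighborhoods need not converge, since such a family need not be a neighborhood base. The scaffold $b^n_k\to y_n$ cannot rescue this either: ``steering $c^n_k$ close to $b^n_k$'' has no meaning that forces convergence in a non-metrizable space (there is no countably generated uniformity to quantify closeness), and worse, the scaffold can be degenerate --- the convergent sequence $s^n_k\to d_n$ may lie entirely inside the slice $X\times\{y_n\}$, so that $b^n_k=y_n$ for all $k$ and there are no distinct candidates to steer at all. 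Note also that you use the \textsf{CDH} hypothesis only once, to invoke Lemma \ref{seq_crowded}, after which your argument takes place purely in $Y$; but the conclusion you need (that each $y_n$ is a limit of an injective sequence in $Y$) is genuinely a consequence of \textsf{CDH} and is false for crowded $Y$ in general.

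The paper's proof supplies precisely the missing mechanism: since $U$ is \textsf{CDH} and has \emph{one} sequentially crowded countable dense subset (from Lemma \ref{seq_crowded}), \emph{every} countable dense subset of $U$ is sequentially crowded, because a homeomorphism of $U$ carries any countable dense set onto $D$ and the property is topological. The selection is then performed inside $U$, never in $Y$: having built the first $n$ sequences, let $X_n$ be the part of $U$ lying over the second coordinates already used; this set is closed (a finite union of convergent sequences in $Y$ is compact) and nowhere dense in $U$ (here crowdedness of $Y$ and openness of $U$ enter, exactly as in your slice observation). Hence $(D\setminus X_n)\cup\{d_n\}$ is again countable and dense, therefore sequentially crowded, and one extracts a sequence $(e^k_n)$ of points of $D\setminus X_n$ converging to $d_n$ whose second coordinates are automatically fresh; passing to a subsequence makes them pairwise distinct. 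Since the $e^k_n$ are taken from $D$ itself, membership in $U$, convergence to $d_n$, and density of $E=\bigcup_n\{e^k_n:k\in\omega\}$ all come for free, sidestepping the impossible task of manufacturing convergent sequences in $Y$ by hand.
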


\begin{proof}
Let $\pi_Y:X\times Y\to Y$ be the projection map. By Lemma \ref{seq_crowded}, the set $U$ contains a countable dense subset
$D=\{d_0,d_1,\ldots\}$ which is sequentially crowded. Since $U$ is \textsf{CDH}, every countable dense subset of $U$ is sequentially crowded.
We shall construct $E$ inductively.
The fiber $X_0=(X\times\{\pi_Y(d_0)\})\cap U$ is a closed nowhere-dense subset of $U$ (because $U$ is open in $X\times Y$ and $Y$ has no
isolated points).
Thus, the set $(D\setminus X_0)\cup\{d_0\}$ is dense in $U$, hence, sequentially crowded. We infer that there is
a sequence $(e^k_0)_{k\in\omega}$ of elements of $D\setminus X_0$ convergent to $d_0$. Passing to a subsequence we can ensure that all points in the sequence
$(\pi_Y(e^k_0))_{k\in\omega}$ are distinct.

Now, suppose that for $n\geq 1$ and every $m<n$, we have constructed sequences $(e^k_m)_{k\in\omega}$ such that:
\begin{enumerate}
 \item $\pi_Y\upharpoonright \bigcup_{m< n}\{e_m^k:k\in\omega\}$ is one-to-one, and
 \item the sequence $(e^k_m)_{k\in\omega}$ converges to $d_m$.
\end{enumerate} 
We will construct a sequence $(e^k_n)_{k\in\omega}$ in such a way that the conditions (1) and (2) above will remain true for $m=n$. To this end, consider
the set
$$X_n=U\cap (X\times \bigcup_{m<n}\pi_Y(\{e^k_m:k\in\omega\}\cup\{d_m\})).$$
The set $\{e^k_m:k\in\omega\}\cup\{d_m\}$ is compact being a convergent sequence, hence $X_n$ is closed in $U$. Moreover, the set $X_n$ is nowhere-dense.
Indeed, if $V$ were a nonempty open in $U$ subset of $X_n$, it would be open in $X\times Y$ (since $U$ is). The projection $\pi_Y:X\times Y\to Y$ is open, so
the set $$\pi_Y(V)\subseteq \bigcup_{m<n}\pi_Y(\{e^k_m:k\in\omega\}\cup\{d_m\})$$ would be open in $Y$. But $Y$ has no isolated points and
$\bigcup_{m<n}\pi_Y(\{e^k_m:k\in\omega\}\cup\{d_m\})$ is a finite union of convergent sequences, a contradiction.

Since $X_n$ is closed and nowhere-dense, the set $(D\setminus X_n)\cup\{d_n\}$ is dense in $U$ and thus it is sequentially crowded.
It follows that there is a sequence $(e^k_n)_{k\in\omega}$ of elements of $D\setminus X_n$,
convergent to $d_n$. Again, passing to a subsequence we can make sure that all points in the sequence $(\pi_Y(e^k_n))_{k\in\omega}$ are distinct.
This finishes the inductive construction of sequences $(e^k_n)_{k\in \omega}$, for $n=0,1,\ldots$, satisfying conditions (1) and (2).

We define $$E=\bigcup_{n=0}^\infty\{e_n^k:k\in\omega\}.$$
The set $E$ is dense in $U$, because by (2), its closure contains $D$. It follows from (1) that $\pi_Y\upharpoonright E$ is one-to-one, hence
for each $y\in Y$ the set $(X\times\{y\})\cap E$ has at most one element.
\end{proof}

\begin{proof}[Proof of Theorem \ref{product_Cantor_set}]
Let $\pi_Y:X\times Y\to Y$ be the projection map.
By symmetry it is enough to show that if $X$ contains a copy of the Cantor set, then $Y$ contains a copy of the Cantor set too. 
Suppose that $X$ contains a copy of the Cantor set. Then, clearly, the product $X\times Y$ contains a copy of the Cantor set $C'$.
By Theorem \ref{Fitzpatrick_Zhou}, there is a clopen homogeneous \textsf{CDH} space $U$ which meets $C'$.
Since $U$ is clopen, the intersection $C = U\cap C'$ is again a copy of the Cantor set. By Lemma \ref{dense_notgdelta}, there is a countable dense subset $D$ of $U$
such that $D\cap C$ is dense in $C$. Applying Lemma \ref{lem_1-1_projection} to $U$, we infer that there is a countable dense subset $E$ of $U$ such that
the map $\pi_Y\upharpoonright E$ is one-to-one. Now, since $U$ is \textsf{CDH}, there exists a homeomorphism $h:U\to U$ such that $h(D)=E$. The set
$D\cap C$ is dense in $C$, so the set $F=E\cap h(C)$ is dense in $h(C)$, yet another copy of the Cantor set. It follows that $F$ is crowded and
$\pi_Y\upharpoonright F$ is one-to-one. Obviously, $\pi_Y(F)$ is crowded and it is dense in $\pi_Y(h(C))$. Therefore, $\pi_Y(h(C))$ is a crowded compact metrizable
subspace of $Y$, so it must contain a copy of the Cantor set.
\end{proof}

One can easily observe that the above proof actually gives the following.

\begin{thm}\label{product_Cantor_2}
Let $\prod_{t\in T} X_t$ be a \textsf{CDH} product of crowded spaces $X_t$. If $\prod_{t\in T} X_t$ contains a copy of the Cantor set, then each  $X_t$ does so. 
\end{thm}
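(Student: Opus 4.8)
The plan is to fix an arbitrary coordinate $t_0\in T$ and reduce to the two-factor situation already settled in Theorem \ref{product_Cantor_set}. Writing $Z=\prod_{t\in T\setminus\{t_0\}}X_t$ and using the canonical homeomorphism $\prod_{t\in T}X_t\cong Z\times X_{t_0}$, I regard the factor $X_{t_0}$ as the space ``$Y$'' of Lemma \ref{lem_1-1_projection} and $Z$ as the space ``$X$''. The observation that makes the whole argument transfer verbatim is that Lemma \ref{lem_1-1_projection} is asymmetric: it demands crowdedness only of the factor onto which we project, namely $X_{t_0}$, while nothing is required of the remaining factor $Z$. Since each $X_t$ is crowded by hypothesis, $X_{t_0}$ is crowded, and so the grouping of all coordinates other than $t_0$ into the single (possibly non-crowded) factor $Z$ is harmless.

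The steps then follow the proof of Theorem \ref{product_Cantor_set} almost word for word. Let $\pi_{t_0}\colon Z\times X_{t_0}\to X_{t_0}$ denote the projection. By hypothesis the product is \textsf{CDH} and contains a copy $C'$ of the Cantor set. Theorem \ref{Fitzpatrick_Zhou} yields a clopen homogeneous \textsf{CDH} subspace $U$ meeting $C'$; since $U$ is clopen, $C=U\cap C'$ is again a copy of the Cantor set, and in particular $U$ contains a nontrivial convergent sequence. Lemma \ref{dense_notgdelta} supplies a countable dense $D\subseteq U$ with $D\cap C$ dense in $C$, and Lemma \ref{lem_1-1_projection}, applied to $U\subseteq Z\times X_{t_0}$, supplies a countable dense $E\subseteq U$ with $\pi_{t_0}\upharpoonright E$ one-to-one. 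Using that $U$ is \textsf{CDH}, I take a homeomorphism $h\colon U\to U$ with $h(D)=E$; then $F=E\cap h(C)$ is dense in the Cantor set $h(C)$, hence crowded, and $\pi_{t_0}\upharpoonright F$ is one-to-one.

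To finish, $\pi_{t_0}(F)$ is crowded and dense in $\pi_{t_0}(h(C))$, which, being a continuous image of the Cantor set $h(C)$ inside the Hausdorff space $X_{t_0}$, is compact metrizable; as it contains the dense crowded set $\pi_{t_0}(F)$ it is itself crowded, hence a crowded compact metrizable space, and therefore contains a copy of the Cantor set lying inside $X_{t_0}$. Since $t_0$ was arbitrary, every $X_t$ contains a copy of the Cantor set. I do not expect a genuine obstacle here: the entire content is the asymmetry of Lemma \ref{lem_1-1_projection} noted above. The only facts needing a line of justification are standard ones already used in the two-factor proof, namely that a nonempty clopen subset of a Cantor set is again a Cantor set (so that $C$ is perfect) and that a crowded compact metrizable space contains a copy of the Cantor set.
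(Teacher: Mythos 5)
Your proposal is correct and is exactly the argument the paper intends: the paper proves Theorem \ref{product_Cantor_2} by remarking that the proof of Theorem \ref{product_Cantor_set} goes through verbatim, and you have spelled out that observation, correctly identifying the key point that Lemma \ref{lem_1-1_projection} requires crowdedness only of the factor $Y=X_{t_0}$ being projected onto, so grouping the remaining coordinates into a single factor $Z$ is harmless. Your added justifications (the clopen trace $U\cap C'$ is again a Cantor set, and a crowded compact metrizable space contains a Cantor set) are the right standard facts and match the paper's use of them.
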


Since every infinite product of crowded spaces contains a copy of the Cantor set, we immediately obtain the next corollary which generalizes \cite[Corollary 2.5]{HG1}.

\begin{cor}\label{product_Cantor_3} Let $T$ be an infinite set, and $X_t$ be a crowded space for $t\in T$. If the product $\prod_{t\in T} X_t$ is \textsf{CDH}, then each  $X_t$ contains a copy of the Cantor set.
\end{cor}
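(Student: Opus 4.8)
The plan is to deduce the corollary directly from Theorem~\ref{product_Cantor_2}, using the elementary fact asserted just before the statement: every infinite product of crowded spaces contains a copy of the Cantor set. Granting this fact, the argument is immediate. The product $\prod_{t\in T}X_t$ is \textsf{CDH} by hypothesis and is a product of crowded spaces, so once we know it contains a copy of the Cantor set, Theorem~\ref{product_Cantor_2} applies verbatim and yields that each factor $X_t$ contains a copy of the Cantor set. Thus the whole content of the proof is concentrated in verifying the embedding fact.

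To establish that $\prod_{t\in T}X_t$ contains a copy of the Cantor set, first I would fix a countably infinite subset $\{t_n:n\in\omega\}\subseteq T$, which exists because $T$ is infinite. Each factor $X_{t_n}$, being crowded, contains two distinct points $p_n^0\ne p_n^1$. For every remaining index $t\in T\setminus\{t_n:n\in\omega\}$ fix an arbitrary point $q_t\in X_t$. Then I would define a map $\phi:2^\omega\to\prod_{t\in T}X_t$ by declaring the $t_n$-coordinate of $\phi(\sigma)$ to be $p_n^{\sigma(n)}$ and the $t$-coordinate of $\phi(\sigma)$ to be $q_t$ for each remaining $t$.

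The map $\phi$ is injective: if $\sigma\ne\tau$, they differ at some $n\in\omega$, and there the $t_n$-coordinates $p_n^{\sigma(n)}$ and $p_n^{\tau(n)}$ are distinct. It is continuous because the preimage under $\phi$ of a subbasic open set $\pi_t^{-1}(W)$ is all of $2^\omega$ when $t\notin\{t_n:n\in\omega\}$ (that coordinate being constant), and is the finite union $\bigcup\{\{\sigma:\sigma(n)=i\}:p_n^i\in W\}$ of clopen subsets of $2^\omega$ when $t=t_n$. Since $2^\omega$ is compact and the product $\prod_{t\in T}X_t$ is Hausdorff (being a product of Tikhonov spaces), the continuous injection $\phi$ is a homeomorphism onto its image, which is therefore a copy of the Cantor set sitting inside $\prod_{t\in T}X_t$.

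No serious obstacle is expected. Crowdedness of the factors plays two separate roles: it supplies the two distinct points $p_n^0,p_n^1$ needed to build $\phi$ (this, together with the Hausdorff property of the product and the compactness of $2^\omega$, is all that enters the embedding argument), and it is precisely the hypothesis needed to invoke Theorem~\ref{product_Cantor_2}. The only point demanding a moment's care is continuity of $\phi$ at the coordinates $t_n$, but this reduces to the observation that each $\{\sigma\in2^\omega:\sigma(n)=i\}$ is clopen, so that the relevant preimages are finite unions of such sets.
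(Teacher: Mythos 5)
Your proposal is correct and follows exactly the paper's route: the paper deduces the corollary immediately from Theorem~\ref{product_Cantor_2} together with the standard fact that every infinite product of crowded spaces contains a copy of the Cantor set, which the paper states without proof and which you verify correctly (compact-to-Hausdorff continuous injection of $2^\omega$ varying two points in countably many coordinates). No gaps; your embedding argument just fills in the detail the paper takes for granted.
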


\section{\textsf{CDH} of $C_p(X)$}\label{cpx}

By Corollary \ref{CDH_implies_Baire_tvs}, we know that if a function space
$C_p(X)$ is \textsf{CDH}, then it is necessarily a Baire space.
Interestingly, there is a topological characterization of the space
$X$ for which the space $C_p(X)$ is a Baire space (see \cite[Theorem
6.4.3]{vM}). The proof of the following simple fact (cf.\ \cite[S.284]{Tk}) does not require
a use of this characterization result, however. First, recall that a
subset $A$ of $X$ is \textit{bounded} if for every $f\in C_p(X)$,
the set $f(A)$ is bounded in $\mathbb{R}$.
Note, that every finite set $A$ is bounded, so this notion is
meaningful if $A$ is infinite.

\begin{proposition}\label{notBaire}
Let $X$ be a space containing an infinite bounded subset $A$. Then $C_p(X)$ is meager. 
\end{proposition}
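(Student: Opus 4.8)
The plan is to show directly that $C_p(X)$ is meager by exhibiting a countable cover of $C_p(X)$ by closed nowhere-dense sets, using the bounded set $A$ as the tool that produces the closed sets. Since $A$ is bounded, every continuous function $f$ is bounded on $A$, which suggests stratifying $C_p(X)$ according to the bound: for each $n \in \omega$ put
\[
B_n = \{f \in C_p(X) : |f(a)| \le n \text{ for all } a \in A\}.
\]
Because $A$ is bounded, each $f \in C_p(X)$ lies in some $B_n$, so $C_p(X) = \bigcup_{n \in \omega} B_n$. It remains to verify two things: that each $B_n$ is closed in the pointwise topology, and that each $B_n$ is nowhere dense.

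\emph{Closedness} should be immediate. For a fixed $a \in A$ the evaluation map $f \mapsto f(a)$ is continuous on $C_p(X)$ by definition of the pointwise topology, so $\{f : |f(a)| \le n\}$ is closed, and $B_n$ is an intersection (over $a \in A$) of such closed sets, hence closed.

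\emph{Nowhere density} is the step I expect to carry the real content, so I would handle it with care. It suffices to show each $B_n$ has empty interior, since $B_n$ is already closed. The key use of infinitude of $A$ is this: a basic open neighborhood of any $f$ constrains the values of functions only on a \emph{finite} set $S$ of points of $X$, whereas $A$ is infinite, so there is a point $a_0 \in A \setminus S$ on which we are free to perturb. Concretely, given any $f \in B_n$ and any basic neighborhood
\[
W = \{g \in C_p(X) : |g(x) - f(x)| < \varepsilon \text{ for } x \in S\},
\]
with $S \subseteq X$ finite, I would pick $a_0 \in A \setminus S$ and produce a continuous $g \in W$ with $|g(a_0)| > n$, forcing $g \notin B_n$. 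To build such a $g$ I would invoke complete regularity (we work with Tikhonov spaces) to find a continuous function separating the finite set $S$ from the point $a_0$, then add a suitable large multiple of it to $f$; this leaves $f$ unchanged on $S$ (so the result stays in $W$) while making the value at $a_0$ exceed $n$. Thus $W \not\subseteq B_n$, proving $B_n$ has empty interior.

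\emph{The main obstacle} is ensuring the perturbing function is genuinely continuous and simultaneously vanishes on all of $S$ while being large at $a_0$; this is exactly where the Tikhonov (complete regularity) hypothesis is essential, and where infinitude of $A$ guarantees a free coordinate $a_0 \notin S$ exists. Once these are in place, the three facts — countable cover, each piece closed, each piece nowhere dense — combine to show $C_p(X)$ is meager.
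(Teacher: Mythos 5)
Your proposal is correct and coincides with the paper's own proof: you use the same decomposition (your $B_n$ is exactly the paper's $F_n=\{f\in C_p(X): f[A]\subseteq[-n,n]\}$), with boundedness of $A$ giving the countable cover and infinitude of $A$ giving empty interiors. The only difference is that you spell out the closedness and the Tikhonov-based perturbation argument that the paper compresses into ``clearly,'' and your details are sound.
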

\begin{proof}
For $n\in \omega$, we set $$F_n=\{f\in C_p(X):f[A]\subseteq
[-n,n]\}.$$ Clearly, the sets $F_n$ are closed and have empty
interiors in $C_p(X)$ because $A$ is infinite. Moreover, since $A$
is a bouded subset of $X$, we have $\bigcup_{n\in \omega}
F_n=C_p(X)$.
\end{proof}

Combining the proposition above with Corollary \ref{CDH_implies_Baire_tvs} we immediately obtain:

\begin{thm}\label{thm_inf_bounded}
Let $X$ be a space containing an infinite bounded subset $A$. Then $C_p(X)$ is not \textsf{CDH}.
\end{thm}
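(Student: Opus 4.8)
The plan is to derive Theorem \ref{thm_inf_bounded} as an immediate corollary of two facts already established in the excerpt: Proposition \ref{notBaire}, which shows that the presence of an infinite bounded subset $A\subseteq X$ forces $C_p(X)$ to be meager, and Corollary \ref{CDH_implies_Baire_tvs}, which asserts that every \textsf{CDH} topological vector space is a Baire space. Since $C_p(X)$ is a topological vector space (under pointwise addition and scalar multiplication, with the pointwise convergence topology), these two statements are directly applicable, and the result follows by a short contrapositive argument.

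\begin{proof}
Suppose, toward a contradiction, that $C_p(X)$ is \textsf{CDH}. The space $C_p(X)$ is a topological vector space, so by Corollary \ref{CDH_implies_Baire_tvs} it must be a Baire space. On the other hand, since $X$ contains an infinite bounded subset $A$, Proposition \ref{notBaire} shows that $C_p(X)$ is meager. As noted in the introduction, a meager space cannot be a Baire space (the Baire Category Theorem fails once the whole space is a countable union of closed nowhere-dense sets). This contradiction shows that $C_p(X)$ is not \textsf{CDH}.
\end{proof}

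There is essentially no obstacle here, since all the substantive work has been carried out in the earlier results; the only point requiring any care is the verification that $C_p(X)$ genuinely falls under the hypotheses of Corollary \ref{CDH_implies_Baire_tvs}, namely that it is a topological vector space rather than merely a topological group. This is standard: addition and scalar multiplication on $C_p(X)$ are jointly continuous in the topology of pointwise convergence, as this topology is inherited from the product space $\mathbb{R}^X$. With that observation in place, the chain "\textsf{CDH} $\Rightarrow$ Baire" from Corollary \ref{CDH_implies_Baire_tvs} collides with "infinite bounded subset $\Rightarrow$ meager" from Proposition \ref{notBaire}, and the theorem is immediate.
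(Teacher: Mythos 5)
Your proof is correct and is exactly the paper's argument: the paper obtains Theorem \ref{thm_inf_bounded} by ``combining the proposition above with Corollary \ref{CDH_implies_Baire_tvs}'', i.e.\ Proposition \ref{notBaire} gives that $C_p(X)$ is meager (hence, being nonempty, not Baire) while Corollary \ref{CDH_implies_Baire_tvs} shows a \textsf{CDH} topological vector space must be Baire. Your additional verification that $C_p(X)$ is a topological vector space is a harmless elaboration of a point the paper takes as standard.
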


Obviously, a nontrivial convergent sequence in a space $X$ forms an infinite bounded subset.  Therefore, we obtain the next corollary which answers
Question 2.6 from \cite{HG}.

\begin{cor}\label{cor_metr}
Let $X$ be a metrizable space. If $C_p(X)$ is \textsf{CDH} then $X$ is discrete.
\end{cor}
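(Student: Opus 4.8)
The plan is to argue by contraposition, reducing everything to Theorem \ref{thm_inf_bounded}. So I assume $X$ is a non-discrete metrizable space and aim to produce an infinite bounded subset $A \subseteq X$; the conclusion that $C_p(X)$ is not \textsf{CDH} then follows immediately from that theorem.

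First I would extract a nontrivial convergent sequence. Since $X$ is not discrete, there is a non-isolated point $x_0 \in X$. Metrizability gives first countability, so I fix a countable decreasing neighborhood base $(U_n)_{n\in\omega}$ at $x_0$; because $x_0$ is non-isolated, each $U_n$ contains a point $x_n \neq x_0$. Thinning the sequence if necessary, I may assume the $x_n$ are distinct, and by construction $x_n \to x_0$. Thus $S = \{x_n : n\in\omega\}$ is a nontrivial convergent sequence in $X$.

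Next I would verify that $S$ (or rather $S \cup \{x_0\}$) is an infinite bounded subset, which is exactly the observation recorded just before the corollary: the set $S \cup \{x_0\}$ is compact, being a convergent sequence together with its limit, so for every $f \in C_p(X)$ the image $f(S \cup \{x_0\})$ is a compact, hence bounded, subset of $\mathbb{R}$. Therefore $A := S$ is an infinite bounded subset of $X$. Applying Theorem \ref{thm_inf_bounded} to this $A$ shows that $C_p(X)$ is not \textsf{CDH}, contradicting the hypothesis; hence $X$ must be discrete.

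I expect no serious obstacle here, since the real work is already packaged in Theorem \ref{thm_inf_bounded} (via Proposition \ref{notBaire} and Corollary \ref{CDH_implies_Baire_tvs}). The only point requiring the metrizability hypothesis in an essential way is the passage from \emph{non-discrete} to the existence of a nontrivial convergent sequence: this is where first countability is used, and it is also the reason the statement is phrased for metrizable $X$ rather than arbitrary $X$, since a general non-discrete space need not contain any nontrivial convergent sequence.
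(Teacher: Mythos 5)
Your proposal is correct and follows exactly the paper's argument: the paper derives the corollary from Theorem \ref{thm_inf_bounded} via the one-line observation that a nontrivial convergent sequence (which every non-discrete metrizable space contains, by first countability) forms an infinite bounded subset. You have merely spelled out the routine details (extraction of the sequence from a neighborhood base, compactness of the sequence with its limit), all of which are sound.
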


In the context of the \textsf{CDH} property of $C_p(X)$ spaces, it is worth recalling that $C_p(X)$ is separable if and only if
$X$ admits a weaker separable metrizable topology (see \cite[S.174]{Tk}). If such $X$ contains additionally an infinite bounded subset,
then it actually contains a nontrivial convergent sequence. Indeed, 
if $A$ is bounded so is its closure $\bar A$. By \cite[Corollary 6.10.9]{AT}, $\bar A$ is compact. Having a weaker
separable metrizable topology $\bar A$ is a metrizable compactum;
therefore, $A$ contains a nontrivial convergent sequence. We are
indebted to Alexander Osipov for informing us about this observation.
Therefore, by the above fact, in Theorem \ref{thm_inf_bounded} (and also in
Proposition \ref{dense_gdelta} below) instead of assuming that $X$ contains an infinite bounded subset, one can equivalently assume that $X$ contains a nontrivial
convergent sequence.
\medskip

For a discrete space $X$, we have $C_p(X) = \mathbb{R}^X$.
The following characterization of \textsf{CDH} products of the real line incorporates assertions that can be found in
\cite{SZ} and \cite{HZA}, cf. \cite[Section 6]{HG}.

\begin{thm}\label{characterization_products}
The product $\mathbb{R}^X$ (\,$[0,1]^X,\ \{0,1\}^X$) is \textsf{CDH} if and only if $X$ is of cardinality less than the
pseudointersection number $\mathfrak{p}$.
\end{thm}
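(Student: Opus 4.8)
The plan is to reduce everything to the cardinal $\kappa=|X|$ and to the single nontrivial model space $\{0,1\}^\kappa$, and then to invoke the set-theoretic characterization of Stepr\={a}ns--Zhou and Hru\v{s}\'ak--Zamora Avil\'es. First I would observe that each of the three spaces depends only on $\kappa$ and that, for infinite $\kappa$, one has the homeomorphisms $\{0,1\}^\kappa\cong(\{0,1\}^\omega)^\kappa$, $[0,1]^\kappa\cong([0,1]^\omega)^\kappa$ and $\mathbb{R}^\kappa\cong(\mathbb{R}^\omega)^\kappa$, since $|\omega\times\kappa|=\kappa$. Thus all three are $\kappa$-th powers of a \emph{Polish} \textsf{CDH} factor (the Cantor set, the Hilbert cube, and $\mathbb{R}^\omega\cong\ell_2$, respectively), which puts them on the same footing; the finite cases are trivially \textsf{CDH} and satisfy the bound. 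The Hewitt--Marczewski--Pondiczery theorem gives separability exactly when $\kappa\le\mathfrak{c}$; since $\mathfrak{p}\le\mathfrak{c}$, for $\kappa>\mathfrak{c}$ the spaces are not even separable, hence not \textsf{CDH}, and $\kappa>\mathfrak{c}\ge\mathfrak{p}$ is compatible with the asserted equivalence. This reduces the problem to $\kappa\le\mathfrak{c}$.

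For the core case $\{0,1\}^\kappa$ I would describe a countable dense subset $D$ (faithfully enumerated by $\omega$) through its traces $A_\alpha=\{n:d_n(\alpha)=1\}\subseteq\omega$, for $\alpha<\kappa$; density is then equivalent to every finite Boolean combination of the $A_\alpha$ and their complements being nonempty, and faithfulness to the family separating points of $\omega$. In the direction $\kappa<\mathfrak{p}$ one builds, for two given dense sets $D$ and $E$, an autohomeomorphism carrying $D$ onto $E$ by a back-and-forth argument: the finite partial matchings form a $\sigma$-centered poset, the $\kappa<\mathfrak{p}$ requirements (hit every point of $D$ and of $E$, and control each coordinate) are dense, and Bell's theorem, that Martin's axiom for $\sigma$-centered posets holds for fewer than $\mathfrak{p}$ dense sets, produces a generic filter assembling the homeomorphism; this is the argument of \cite{SZ}. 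In the direction $\mathfrak{p}\le\kappa\le\mathfrak{c}$ one takes a family witnessing $\mathfrak{p}$, that is, a family of size $\mathfrak{p}\le\kappa$ with the strong finite intersection property but no pseudointersection, and uses it to code two dense sets $D,E$ so rigidly that any homeomorphism $h$ with $h[D]=E$ would manufacture a pseudointersection, a contradiction; this is the non-\textsf{CDH} half established in \cite{SZ} and \cite{HZA}.

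Finally I would transfer the characterization from $\{0,1\}^\kappa$ to $[0,1]^\kappa$ and $\mathbb{R}^\kappa$. The uniform viewpoint above is what makes this possible: all three are powers $Z^\kappa$ of a self-absorbing Polish \textsf{CDH} space $Z\cong Z^\omega$, so the same $\sigma$-centered back-and-forth (forward direction) and the same coding obstruction (backward direction) go through, the only change being that for the connected factors $[0,1]^\omega$ and $\mathbb{R}^\omega$ the finite approximations are taken inside the richer homeomorphism groups of those factors. I expect the main obstacle to lie exactly here, in the non-\textsf{CDH} direction for the connected powers: for $\{0,1\}^\kappa$ the rigidity is carried by the Boolean structure of clopen sets, whereas for $[0,1]^\kappa$ and $\mathbb{R}^\kappa$ one must argue that a would-be homeomorphism still forces a pseudointersection despite the much larger supply of local homeomorphisms, and this is where the arguments of \cite{SZ} and \cite{HZA} do the real work. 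I note, finally, that the Baireness furnished by our Theorem \ref{product_Baire} and Lemma \ref{lemma_meager_not_CDH} is automatic for these spaces, since each is a Baire space for every $\kappa$, so it does not by itself detect the threshold $\mathfrak{p}$; the cardinal bound genuinely comes from the set-theoretic analysis of countable dense subsets.
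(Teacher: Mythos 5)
The paper contains no proof of this theorem at all: it is presented as a known result that ``incorporates assertions that can be found in \cite{SZ} and \cite{HZA}, cf.\ \cite[Section 6]{HG}''. Your proposal does essentially the same thing. Both hard directions are delegated to exactly those references --- Bell's theorem for $\sigma$-centered posets in the direction $\kappa<\mathfrak{p}$, and the coding of a witnessing family without pseudointersection in the direction $\kappa\ge\mathfrak{p}$ --- and your surrounding reductions are correct and are indeed the intended glue: separability fails for $\kappa>\mathfrak{c}\ge\mathfrak{p}$ by Hewitt--Marczewski--Pondiczery; the self-power homeomorphisms $Z^\kappa\cong(Z^\omega)^\kappa$ put the three spaces on a common footing; and your closing observation is apt that all three spaces are Baire for every $\kappa$, so the Baireness criteria of this paper cannot detect the threshold $\mathfrak{p}$. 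Your sketch of the negative direction is also right in spirit: the point coded by the witnessing family is not the limit of any nontrivial sequence from the coded dense set, whereas a generic dense set is sequentially crowded, which is precisely the kind of invariant this paper exploits in Lemmas \ref{seq_crowded} and \ref{lemma_meager_not_CDH}.

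One genuine slip deserves correction: the claim that ``the finite cases are trivially \textsf{CDH}'' is false for the cube. For $1\le n<\omega$ the space $[0,1]^n$ is \emph{not} \textsf{CDH}: homeomorphisms of a manifold with boundary preserve the boundary, so a countable dense subset of the open cube $(0,1)^n$ can never be mapped onto a countable dense set meeting the boundary (for $n=1$, compare $\mathbb{Q}\cap(0,1)$ with $\mathbb{Q}\cap[0,1]$). Hence the theorem must be read with $X$ infinite --- the convention the paper tacitly adopts --- and, incidentally, \textsf{CDH} of $\mathbb{R}^n$ is classical (Brouwer, Fort) rather than trivial; only $\{0,1\}^n$, a finite discrete space, is genuinely trivial. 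This does not affect your argument for infinite $\kappa$, which is where all the content lies.
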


The above result together with our Corollary \ref{cor_metr} yields:

\begin{thm}\label{characterization_metr}
Let $X$ be a metrizable space. Then $C_p(X)$ is \textsf{CDH} if and
only if $X$ is discrete of cardinality less than $\mathfrak{p}$.
\end{thm}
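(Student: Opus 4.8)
The plan is to deduce this characterization by simply combining the two results already established above, namely Corollary \ref{cor_metr} and Theorem \ref{characterization_products}, using the standard identification $C_p(X) = \mathbb{R}^X$ that holds for discrete $X$. The argument splits along the two implications of the biconditional.

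First I would handle the forward implication. Assume $C_p(X)$ is \textsf{CDH} for a metrizable space $X$. By Corollary \ref{cor_metr}, the space $X$ must be discrete. For a discrete space the pointwise topology on $C(X)$ coincides with the product topology, so $C_p(X) = \mathbb{R}^X$ as topological vector spaces. Thus $\mathbb{R}^X$ is \textsf{CDH}, and Theorem \ref{characterization_products} forces $|X| < \mathfrak{p}$. Hence $X$ is discrete of cardinality less than $\mathfrak{p}$, as claimed.

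For the reverse implication I would argue symmetrically. Suppose $X$ is discrete with $|X| < \mathfrak{p}$. Again $C_p(X) = \mathbb{R}^X$, and since $|X| < \mathfrak{p}$, Theorem \ref{characterization_products} yields that $\mathbb{R}^X$ is \textsf{CDH}; therefore $C_p(X)$ is \textsf{CDH}.

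I expect no genuine obstacle at this stage, since all the substantive work has been front-loaded: into Corollary \ref{cor_metr} (whose proof rests on Theorem \ref{thm_inf_bounded}, i.e.\ on the Baire-space consequence of \textsf{CDH} for topological vector spaces), and into Theorem \ref{characterization_products}. The only point deserving a moment's care is the observation that discreteness of $X$ is exactly what makes $C_p(X)$ coincide with the full product $\mathbb{R}^X$, which is what allows Theorem \ref{characterization_products} to be invoked; everything else is a direct substitution.
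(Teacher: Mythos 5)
Your proposal is correct and is precisely the paper's argument: the authors also obtain the theorem by combining Corollary \ref{cor_metr} with Theorem \ref{characterization_products} via the identification $C_p(X)=\mathbb{R}^X$ for discrete $X$, which is stated just before Theorem \ref{characterization_products}. Nothing is missing.
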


Very recently G.\ Plebanek has shown that, for $X$ of cardinality
$\omega_1$, the product space $\mathbb{R}^X$ is {\it not} a
hereditarily Baire space\footnote{Jan van Mill has found a short proof of this result and has kindly allowed us to include it in our paper. His argument can be found in Appendix.}. Hence, if $X$ is a discrete space of cardinality $\omega_1$ then the space $C_p(X)$ is not a hereditarily Baire space.

This implies that, under an additional set-theoretic assumption that
$\omega_1 < \mathfrak{p}$, the product $\mathbb{R}^{\omega_1}$ is an
example of a topological vector space ($C_p(X)$ space) which is
\textsf{CDH} but is not a hereditarily Baire space. Thus, it is not
possible to replace \textit{Baire} by \textit{hereditarily Baire} in
Corollary \ref{CDH_implies_Baire_tvs}, even for spaces of the form
$C_p(X)$. However, the following question remains open.

\begin{que}
Suppose that $X$ is countable and $C_p(X)$ is \textsf{CDH}. Is $C_p(X)$ a hereditarily Baire space?
\end{que}

Note that by the results of \cite{Ma} the existence of a countable nondiscrete $X$ such that $C_p(X)$ is a hereditarily Baire space  is equivalent to the existence of a nonmeager $P$-filter on $\omega$.

An obvious consequence of Corollary \ref{cor_metr} is the following

\begin{cor}
If $X$ is an uncountable separable metrizable space, then $C_p(X)$ is not \textsf{CDH}.
\end{cor}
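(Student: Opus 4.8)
The plan is to obtain this as an immediate consequence of Corollary \ref{cor_metr}, reasoning by contradiction. Suppose that $C_p(X)$ is \textsf{CDH}. Since $X$ is metrizable, Corollary \ref{cor_metr} forces $X$ to be discrete. The only additional ingredient I need is the elementary fact that a discrete space is separable if and only if it is countable: in a discrete space every singleton is open, so the whole space is its unique dense subset, and separability therefore requires that whole space to be countable. Because $X$ is assumed uncountable, it cannot be both discrete and separable. This contradicts the conclusion of Corollary \ref{cor_metr}, and hence $C_p(X)$ is not \textsf{CDH}.

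There is essentially no obstacle to overcome at this stage, since all the substantive work has already been carried out in establishing Theorem \ref{thm_inf_bounded} and its Corollary \ref{cor_metr}; the present statement merely records the fact that the dichotomy ``\textsf{CDH} $\Rightarrow$ discrete'' is vacuous in the separable uncountable case. Alternatively, one could read the result straight off Theorem \ref{characterization_metr}: an uncountable separable metrizable space is in particular not discrete, so it fails the necessary condition for $C_p(X)$ to be \textsf{CDH}. Either route reduces the claim to the single observation about discreteness versus separability, so I would keep the proof to a one- or two-sentence deduction rather than belaboring it.
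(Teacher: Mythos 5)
Your proposal is correct and is exactly the paper's argument: the paper derives this statement as an ``obvious consequence'' of Corollary \ref{cor_metr}, relying on precisely your observation that a discrete separable space must be countable (its only dense subset is the whole space). The alternative route you mention via Theorem \ref{characterization_metr} is equally valid, and the paper itself notes a second, more direct proof using Proposition \ref{dense_gdelta} together with Lemma \ref{dense_notgdelta}, but your one-line deduction matches the paper's primary approach.
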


The last corollary can be also proved in a different, more direct
way. Let us show the following

\begin{proposition}\label{dense_gdelta}
Let $X$ be a separable space which contains an infinite bounded subset $A$. If $C_p(X)$ is separable, then
it contains a countable
dense subset $E$ which is a $G_\delta$-set in $C_p(X)$. 
\end{proposition}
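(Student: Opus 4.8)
The plan is to produce the countable dense $G_\delta$-set $E$ by combining two facts: that singletons (hence finite sets) are $G_\delta$ in $C_p(X)$, and that the meager cover of $C_p(X)$ supplied by Proposition \ref{notBaire} lets one build a dense set meeting each piece of the cover in only finitely many points. First I would recall from the proof of Proposition \ref{notBaire} the increasing sequence of closed nowhere dense sets $F_n=\{f\in C_p(X):f[A]\subseteq[-n,n]\}$ with $\bigcup_n F_n=C_p(X)$. Next, using a countable dense subset $\{x_i:i\in\omega\}$ of the separable space $X$ together with continuity, every singleton is $G_\delta$, since $\{f_0\}=\bigcap_{i,m}\{f:|f(x_i)-f_0(x_i)|<1/m\}$ is a countable intersection of open sets. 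Consequently finite sets are $G_\delta$, so for each $n$ and each finite $S$ the set $F_n\setminus S=F_n\cap(C_p(X)\setminus S)$ is $F_\sigma$ (a closed set intersected with an $F_\sigma$-set). Hence the problem reduces to producing a countable dense $E$ with $E\cap F_n$ finite for every $n$: then $C_p(X)\setminus E=\bigcup_n(F_n\setminus E)$ is a countable union of $F_\sigma$-sets, so $E$ is $G_\delta$.

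For the construction I would fix a countable dense set $\{g_l:l\in\omega\}$ in $C_p(X)$; it suffices to arrange $g_l\in\overline E$ for each $l$. Since $C_p(X)$ is separable, $X$ carries a weaker separable metrizable topology given by a metric $\rho$, and $A$, being infinite and bounded, contains a nontrivial convergent sequence $a_k\to a^*$ with the $a_k\in A$ distinct and $a_k\neq a^*$; note $\rho(a_k,a^*)\to0$. For each pair $(l,k)$ I would use complete regularity of $X$ to choose a continuous $\phi_{l,k}\colon X\to[0,1]$ with $\phi_{l,k}(a_k)=1$ and $\phi_{l,k}\equiv0$ off the ball $V_k=\{x:\rho(x,a_k)<\rho(a_k,a^*)/2\}$ (which is open in $X$, being $\rho$-open), and set $h^l_k=g_l+c_{l,k}\phi_{l,k}$, where the real number $c_{l,k}$ is chosen so large that $|h^l_k(a_k)|$ exceeds the global index $j=j(l,k)$ assigned to $(l,k)$ under a fixed bijection $\omega\leftrightarrow\omega\times\omega$; since $a_k\in A$, this forces $h^l_k\notin F_j$. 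As $h^l_k=g_l$ off the shrinking balls $V_k$ and $a^*\notin V_k$ for all $k$, for every fixed $x$ one has $h^l_k(x)=g_l(x)$ for all large $k$ (and $h^l_k(a^*)=g_l(a^*)$ always), so $h^l_k\to g_l$ pointwise; thus $g_l\in\overline E$ for $E=\{h^l_k:l,k\in\omega\}$, and $E$ is dense. Enumerating $E=\{e_j\}$ so that $e_j=h^{l_j}_{k_j}$ satisfies $e_j\notin F_j$, the monotonicity of $(F_n)$ yields $E\cap F_n\subseteq\{e_0,\dots,e_{n-1}\}$, which is finite, exactly as the reduction requires.

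The main obstacle is the density requirement. Because $C_p(X)$ is separable but, for uncountable $X$, neither second countable nor first countable, one cannot simply diagonalize a choice $e_j\in C_p(X)\setminus F_j$ against a countable base, nor can one merely invoke density of each dense open set $C_p(X)\setminus F_N$ to extract \emph{sequences} converging to the $g_l$ from within it. The role of the moving bump functions $c_{l,k}\phi_{l,k}$, supported on the $\rho$-balls $V_k$ that shrink to $a^*$, is precisely to manufacture by hand genuine sequences converging pointwise to each $g_l$ while keeping their members arbitrarily deep in the complements $C_p(X)\setminus F_N$; this simultaneously secures density and the condition that each $E\cap F_n$ be finite. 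Once $E$ is in hand, the $G_\delta$ conclusion follows from the reduction in the first paragraph. (For the application, $E$ together with a non-$G_\delta$ countable dense set obtained from Lemma \ref{dense_notgdelta}, applied to a copy of the Cantor set inside the nontrivial topological vector space $C_p(X)$, shows that $C_p(X)$ is not \textsf{CDH}.)
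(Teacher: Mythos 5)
Your proposal is correct, and its endgame is the paper's: the same increasing closed cover $F_n=\{f: f[A]\subseteq[-n,n]\}$ (called $B_k$ in the paper), the same reduction --- build a countable dense $E$ meeting each $F_n$ in a finite set, note that finite subsets of $C_p(X)$ are $G_\delta$ because $X$ is separable, and conclude that $C_p(X)\setminus E=\bigcup_n(F_n\setminus E)$ is $F_\sigma$ --- and the same device of adding bump functions whose height grows with a global index so that the perturbed functions escape the $F_n$'s. Where you genuinely diverge is the density verification. The paper perturbs each member $h_n$ of a fixed countable dense set exactly \emph{once}, by a bump $r_ng_n$ supported in pairwise disjoint open sets $U_n\ni a_n$ (distinct $a_n\in A$ with disjoint neighborhoods exist in any infinite Hausdorff space by a standard induction), and checks density directly against a basic open set $W$ with finite support $F$: infinitely many $h_n$ lie in $W$ since $C_p(X)$ is crowded, while $F$ meets only finitely many $U_n$, so some $f_{n_0}$ agrees with $h_{n_0}$ on $F$ and lies in $W$. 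You instead manufacture, for each $g_l$, an entire sequence $h^l_k\to g_l$ pointwise, supported on $\rho$-balls shrinking to the limit $a^\ast$ of a nontrivial convergent sequence in $A$; your convergence check (each fixed $x\neq a^\ast$ eventually escapes $V_k$ by the triangle inequality, and $a^\ast\notin V_k$ for all $k$) is sound. But this route imports two extra facts: that separability of $C_p(X)$ yields a weaker separable metric $\rho$ on $X$, and that an infinite bounded set in such a space contains a nontrivial convergent sequence --- the Osipov observation resting on [AT, Corollary 6.10.9], which the paper records just before the proposition and explicitly licenses for use here, so your appeal is legitimate, though it is real machinery that the paper's proof deliberately avoids. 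Your ``main obstacle'' paragraph also overstates the difficulty: had you taken the $a_k$ with \emph{pairwise disjoint} supports $U_k$, each point of $X$ would lie in at most one $U_k$, so your own sequence trick $h^l_k=g_l+c_{l,k}\phi_k\to g_l$ would work verbatim with no metric and no convergent sequence in $A$; alternatively, density can be verified on basic open sets as in the paper, with no sequences at all. In short: correct, same skeleton, but a heavier and avoidable density argument (countably many bumps per dense function plus the [AT] compactness input, versus the paper's one bump per function and an elementary basic-open-set check).
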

\begin{proof}
By a standard inductive argument we can pick a sequence $(a_n)$, $a_n\in A$, and a sequence  $(U_n)$ of pairwise disjoint open sets such that $a_n\in U_n$, for $n\in\omega$. For each $n$, take a continuous function $g_n: X\to [0,1]$ such that $g_n(a_n)=1$ and $g_n(x)=0$ for $x\in X\setminus U_n$. Let $\{h_n: n\in\omega\}$ be dense in $C_p(X)$, and let $r_n = |h_n(a_n)| + n$. Put $f_n = h_n + r_ng_n$ and define $E = \{f_n: n\in\omega\}$. We will show that $E$ has the required properties.

Take any finite set $F = \{x_1,\dots,x_k\}\subseteq X$, and a sequence $V_1,\dots,V_k$ of nonempty open subsets of $\mathbb{R}$, and consider the basic open set
$$W = \{f\in C_p(X): f(x_i)\in V_i,\ i\le k\}$$
in $C_p(X)$ given by these sequences. Observe that $W$ contains infinitely many $h_n$, since $C_p(X)$ is crowded. On the other hand $F$ intersects only finitely many $U_n$. Therefore we can find $n_0$ such that $h_{n_0}\in W$ and $F\cap U_{n_0} = \emptyset$. Then $f_{n_0}|F = h_{n_0}|F$, hence  $f_{n_0}\in W$.

Let $B_k = \{f\in C_p(X): \forall a\in A\ |f(a)|\le k\}$. Clearly, all sets $B_k$ are closed and $C_p(X) = \bigcup_{k\in\omega} B_k$.
From the definition of $r_n$ and $f_n$ immediately follows that $|f_n(a_n)|\ge n$, therefore $f_n\notin B_k$ for $n>k$.

Since $X$ is separable, each finite subset of $C_p(X)$ is a $G_\delta$-set (see \cite[S.173]{Tk}). Therefore $$C_p(X)\setminus E = \bigcup_{k\in\omega} (B_k\setminus \{f_n: n\le k\})$$
is an $F_\sigma$-set and $E$ is a a $G_\delta$-subset of $C_p(X)$.
\end{proof}

From Lemma \ref{dense_notgdelta} and Proposition \ref{dense_gdelta} we immediately obtain version of Theorem \ref{thm_inf_bounded} for separable $X$.

\medskip

For a space $X$ and a subset $Y$ of $\mathbb{R}$ we denote by $C_p(X,Y)$ the subspace of $C_p(X)$ consisting of functions with values in $Y$, i.e.,  $C_p(X,Y) = C_p(X)\cap Y^X$. Let $\mathbb{I}$ denote the unit interval $[0,1]$. For the space $C_p(X,\mathbb{I})$ we have the following counterpart of Theorem \ref{characterization_metr}

\begin{thm}\label{characterization_metr_I}
Let $X$ be a metrizable space. Then $C_p(X,\mathbb{I})$ is \textsf{CDH} if and
only if $X$ is discrete of cardinality less than $\mathfrak{p}$.
\end{thm}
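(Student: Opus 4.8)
The plan is to follow the pattern of Theorem~\ref{characterization_metr}, reducing everything to the discrete case, which is governed by Theorem~\ref{characterization_products}. The backward implication is immediate: if $X$ is discrete with $|X|<\mathfrak p$, then every map $X\to\mathbb I$ is continuous, so $C_p(X,\mathbb I)=\mathbb I^X=[0,1]^X$, which is \textsf{CDH} by Theorem~\ref{characterization_products}. For the forward implication I assume $C_p(X,\mathbb I)$ is \textsf{CDH} (hence separable) and aim to prove that $X$ must be discrete; once this is established, $C_p(X,\mathbb I)=[0,1]^X$ again, and Theorem~\ref{characterization_products} forces $|X|<\mathfrak p$, completing the equivalence.

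To prove discreteness I argue by contradiction. If $X$ is not discrete then, being metrizable, it contains a nontrivial convergent sequence $a_n\to a$ with the $a_n$ distinct and different from $a$; fixing a metric $d$ and passing to a subsequence I may assume $d(a_n,a)$ is strictly decreasing. I will verify the three hypotheses of Lemma~\ref{lemma_meager_not_CDH} for $C_p(X,\mathbb I)$, which then gives that this space is not \textsf{CDH}, the desired contradiction. I expect meagerness to be the main obstacle: the ``infinite bounded subset'' route through Proposition~\ref{notBaire} collapses here, since every $f\in C_p(X,\mathbb I)$ is automatically bounded, so one must instead exploit the forced decay of oscillation along the convergent sequence.

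For \emph{meagerness}, set $F_n=\{f\in C_p(X,\mathbb I): |f(a_k)-f(a)|\le 1/3 \text{ for all } k\ge n\}$. Each $F_n$ is closed (an intersection of conditions on continuous evaluations) and $\bigcup_n F_n=C_p(X,\mathbb I)$ because continuity forces $f(a_k)\to f(a)$. Given a nonempty basic open set constraining finitely many points $x_1,\dots,x_m$, I pick $k\ge n$ with $a_k\notin\{x_1,\dots,x_m\}$ and, since $\mathbb I$ has diameter $1$, prescribe $f(a_k)\in\{0,1\}$ at distance $\ge 1/2$ from the value at $a$, then interpolate at finitely many points (Tychonoff interpolation with values in $\mathbb I$); this produces a function in the basic set but outside $F_n$, so $F_n$ is nowhere dense. (The subtle point, worth flagging, is that in the \emph{pointwise} topology the space of convergent $\mathbb I$-sequences is genuinely meager, even though it is Baire in the sup norm.) For a \emph{copy of the Cantor set}, I fix pairwise disjoint open $U_n\ni a_n$ with $a\notin U_n$ sitting in shrinking annuli about $a$, Urysohn functions $g_n\colon X\to\mathbb I$ with $g_n(a_n)=1$ and $g_n\equiv 0$ off $U_n$, and consider $\Phi\colon 2^\omega\to C_p(X,\mathbb I)$, $\Phi(\sigma)=\sum_n\sigma(n)2^{-n}g_n$; disjointness of supports keeps values in $\mathbb I$ and yields continuity at $a$ (each value is $\le 2^{-n}\to0$ as $x\to a$), while $\Phi(\sigma)(a_n)=\sigma(n)2^{-n}$ gives injectivity, so $\Phi$ is an embedding of a compactum and its image is a Cantor set.

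For a \emph{sequentially crowded countable dense subset}, I first check the whole space is sequentially crowded: given $f$, choose $c_k\in\{0,1\}$ with $|c_k-f(a_k)|\ge 1/2$ and set $f_k=f+\tfrac12 g_k(c_k-f)$; then $f_k\in C_p(X,\mathbb I)$, $f_k\neq f$, and $f_k\to f$ pointwise since each point lies in at most one $U_k$. As $C_p(X,\mathbb I)$ is separable, the inductive construction in the second part of the proof of Lemma~\ref{seq_crowded} — which uses only separability and sequential crowdedness, not homogeneity — then produces the required dense subset. With all three hypotheses in hand, Lemma~\ref{lemma_meager_not_CDH} shows $C_p(X,\mathbb I)$ is not \textsf{CDH}, contradicting the assumption; hence $X$ is discrete, which finishes the argument.
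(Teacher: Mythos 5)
Your proof is correct, and its skeleton coincides with the paper's: both directions reduce to Theorem~\ref{characterization_products}, and the forward direction is settled by verifying the three hypotheses of Lemma~\ref{lemma_meager_not_CDH}, your meagerness argument being essentially identical to the paper's Proposition~\ref{Cp(X,I)_meager} (same sets $F_n$, same finite interpolation to escape them). You diverge in how the other two hypotheses are checked. For the Cantor set the paper simply observes that the constant functions form a copy of $\mathbb{I}$ inside $C_p(X,\mathbb{I})$; your embedding $\Phi(\sigma)=\sum_n \sigma(n)2^{-n}g_n$ is correct (the shrinking-annuli choice of the $U_n$ gives local finiteness away from $a$ and continuity at $a$, and evaluation at the $a_n$ gives injectivity), but it is unnecessary work. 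For the sequentially crowded countable dense set the paper takes a detour: it proves (Lemma~\ref{sep_sep}) that $C_p(X,(0,1))$ is separable, notes that this space is homeomorphic to $C_p(X)$, applies the topological-group half of Lemma~\ref{seq_crowded} there, and finally uses density of $C_p(X,(0,1))$ in $C_p(X,\mathbb{I})$. You instead prove directly that all of $C_p(X,\mathbb{I})$ is sequentially crowded via the perturbations $f_k=f+\tfrac{1}{2}g_k(c_k-f)$ --- which indeed take values in $\mathbb{I}$ (each $f_k(x)$ is a convex combination of $f(x)$ and $c_k$), differ from $f$ at $a_k$, and converge pointwise since the supports $U_k$ are pairwise disjoint --- and you correctly observe that the inductive construction in the second half of the proof of Lemma~\ref{seq_crowded} uses only separability plus sequential crowdedness of the ambient space, not homogeneity. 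This makes your argument independent of Lemma~\ref{sep_sep} and of the homeomorphism $C_p(X,(0,1))\cong C_p(X)$, at the cost of explicit Urysohn-function computations, whereas the paper buys brevity by leaning on its group-case lemma; your explicit final step (discreteness plus $C_p(X,\mathbb{I})=\mathbb{I}^X$ plus Theorem~\ref{characterization_products} forcing $|X|<\mathfrak{p}$) also spells out what the paper leaves implicit.
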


For the proof we need a weaker version of Proposition \ref{notBaire} for $C_p(X,\mathbb{I})$

\begin{prp}\label{Cp(X,I)_meager}
Let $X$ be a space containing a nontrivial convergent sequence. Then the space $C_p(X,\mathbb{I})$ is meager.
\end{prp}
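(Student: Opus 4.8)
The plan is to mimic the proof of Proposition \ref{notBaire}, writing $C_p(X,\mathbb{I})$ as a countable union of closed nowhere-dense sets; but the cover used there is unavailable here, since every function already takes values in $\mathbb{I}=[0,1]$ and so the sets $\{f:f[A]\subseteq[-n,n]\}$ all coincide with the whole space. Instead I would exploit continuity along the convergent sequence. Fix a nontrivial convergent sequence $(x_k)_{k\in\omega}$ in $X$ with limit $x_\infty$, where we may assume the $x_k$ are pairwise distinct and different from $x_\infty$. For $n\in\omega$ put
$$F_n=\{f\in C_p(X,\mathbb{I}): |f(x_k)-f(x_\infty)|\le 1/3 \text{ for all } k\ge n\}.$$

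First I would check that $\bigcup_n F_n=C_p(X,\mathbb{I})$: for any continuous $f$ the values $f(x_k)$ converge to $f(x_\infty)$, so $|f(x_k)-f(x_\infty)|\le 1/3$ holds for all sufficiently large $k$, placing $f$ in some $F_n$. Each $F_n$ is closed, being the intersection over $k\ge n$ of the closed sets $\{f:|f(x_k)-f(x_\infty)|\le 1/3\}$, which are preimages of a closed interval under the continuous evaluation $f\mapsto f(x_k)-f(x_\infty)$.

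The main point — and the only real obstacle — is to show that each $F_n$ has empty interior. Given a nonempty basic open set $W$ determined by finitely many points $y_1,\dots,y_j$ and open value-constraints, I would fix some $f_0\in W$ and choose an index $k_0\ge n$ with $x_{k_0}\notin\{y_1,\dots,y_j\}$, which is possible since infinitely many $x_k$ lie outside this finite set. Because $X$ is Tikhonov, the finite set $\{y_1,\dots,y_j,x_\infty\}$ is closed and omits $x_{k_0}$, so complete regularity furnishes a continuous $u\colon X\to[0,1]$ with $u(x_{k_0})=1$ and $u\equiv 0$ on $\{y_1,\dots,y_j,x_\infty\}$. Setting $c=0$ if $f_0(x_\infty)\ge 1/2$ and $c=1$ otherwise, the function $f=(1-u)f_0+cu$ is continuous with values in $[0,1]$ (a pointwise convex combination of $f_0$ and $c$), agrees with $f_0$ at every $y_i$, hence $f\in W$, and at $x_\infty$, while $f(x_{k_0})=c$ satisfies $|f(x_{k_0})-f(x_\infty)|\ge 1/2>1/3$. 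Thus $f\in W\setminus F_n$, so $W\not\subseteq F_n$, proving $F_n$ nowhere-dense.

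Hence $C_p(X,\mathbb{I})=\bigcup_n F_n$ exhibits the space as meager, as required. The delicate step is exactly the construction of $f$ above: one must separate a single late term of the sequence from the finitely many constraint points and from $x_\infty$ while keeping the range inside $[0,1]$, and the convex-combination device is precisely what preserves both the range and the constraints already imposed by $W$.
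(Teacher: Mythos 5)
Your proof is correct and takes essentially the same approach as the paper's: the identical decomposition $C_p(X,\mathbb{I})=\bigcup_n F_n$ with $F_n=\{f: |f(x_k)-f(x_\infty)|\le 1/3 \text{ for all } k\ge n\}$, the same closedness and covering observations, and the same nowhere-density argument with the $1/2$-versus-$1/3$ gap. The only difference is one of detail: where the paper merely asserts the existence of a $g\in C_p(X,\mathbb{I})$ agreeing with $f$ on the finite constraint set and satisfying $|g(x_k)-g(x)|\ge 1/2$, you construct it explicitly via a Urysohn function and a pointwise convex combination, correctly making $u$ vanish at $x_\infty$ as well so the estimate survives even when $x_\infty$ is among the constraint points.
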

\begin{proof}
Let $(x_n)$ be a sequence of distinct points of $X$ converging to a point $x\in X$. For $n\in \omega$, we set $$F_n=\{f\in C_p(X,\mathbb{I}): \forall (k\ge n)\  |f(x_k) - f(x)|\le 1/3\}.$$ Clearly, the sets $F_n$ are closed and $C_p(X,\mathbb{I}) = \bigcup_{n} F_n$, because, for each $f\in C_p(X,\mathbb{I})$, $f(x_n)\to f(x)$. Given $n$, the set $F_n$ 
has empty interior in $C_p(X,\mathbb{I})$, since, for any $f\in C_p(X,\mathbb{I})$ and any finite $A\subset X$, we can find $k\ge n$ such that $x_k\notin A$, and a function $g\in C_p(X,\mathbb{I})$ such that $g|A = f|A$ and $|g(x_k) - g(x)|\ge 1/2$.
\end{proof}

\begin{lem}\label{sep_sep}
For a space $X$, if $C_p(X,\mathbb{I})$ is separable, so is the space $C_p(X,(0,1))$.
\end{lem}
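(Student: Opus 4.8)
The plan is to reduce the statement to the known characterization of separability of $C_p(X)$. First I would observe that the open interval $(0,1)$ is homeomorphic to the real line $\mathbb R$, and that any homeomorphism $\eta:(0,1)\to\mathbb R$ induces a homeomorphism $\Phi:C_p(X,(0,1))\to C_p(X)$ given by $\Phi(f)=\eta\circ f$. Indeed, $\Phi$ is a bijection, and since the pointwise topology is generated by the subbasic sets $\{g:g(x)\in V\}$ (for $x\in X$ and $V$ open in the range), the fact that $\eta$ is a homeomorphism shows that $\Phi$ carries subbasic open sets to subbasic open sets in both directions. Hence $C_p(X,(0,1))$ is separable if and only if $C_p(X)$ is separable, and by \cite[S.174]{Tk} the latter holds precisely when $X$ admits a weaker separable metrizable topology.

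Thus it suffices to prove that if $C_p(X,\mathbb I)$ is separable then $X$ admits a weaker separable metrizable topology. Let $\{f_n:n\in\omega\}$ be a countable dense subset of $C_p(X,\mathbb I)$. The key step is to check that this family separates the points of $X$. Given distinct $x,y\in X$, since $X$ is Tikhonov there is $g\in C_p(X,\mathbb I)$ with $g(x)=0$ and $g(y)=1$; the set $\{h\in C_p(X,\mathbb I):h(x)\in[0,1/3),\ h(y)\in(2/3,1]\}$ is a nonempty open subset of $C_p(X,\mathbb I)$ (it contains $g$), so by density it contains some $f_n$, and then $f_n(x)<1/3<2/3<f_n(y)$.

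Once separation is established, I would consider the diagonal map $\Delta=(f_n)_{n\in\omega}:X\to\mathbb I^\omega$. Each coordinate $f_n$ is continuous and the family separates points, so $\Delta$ is a continuous injection. Pulling back the (separable metrizable) topology of $\mathbb I^\omega$ along $\Delta$ yields a topology on $X$ that is coarser than the original one (each $f_n$ being continuous) and under which $X$ is homeomorphic to the subspace $\Delta(X)\subseteq\mathbb I^\omega$, hence separable and metrizable. This is exactly the weaker separable metrizable topology required, which completes the reduction. I do not expect a genuine obstacle here; the only point requiring care is the verification that the countable dense family separates points, which relies essentially on the Tikhonov property of $X$ and on the use of the one-sided neighborhoods $[0,1/3)$ and $(2/3,1]$, which are open in $\mathbb I$.
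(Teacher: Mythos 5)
Your proof is correct, but it takes a genuinely different route from the paper's. The paper's argument is a one-line direct construction: starting from a countable dense set $A\subseteq C_p(X,\mathbb{I})$, it forms $D=\{(1-1/n)f+1/(2n): f\in A,\ n=1,2,\dots\}$; each such function takes values in $[1/(2n),1-1/(2n)]\subset(0,1)$, and given $g\in C_p(X,(0,1))$, a finite set $F$ and $\epse>0$, one first picks $f\in A$ with $|f-g|<\epse/2$ on $F$ and then $n$ with $1/n<\epse/2$, so $D$ is dense in $C_p(X,(0,1))$. You instead reduce to the known characterization \cite[S.174]{Tk} of separability of $C_p(X)$: you use the homeomorphism $C_p(X,(0,1))\cong C_p(X)$ (which the paper itself also invokes, in the proof of Theorem \ref{characterization_metr_I}) and then show, via a point-separation and diagonal-map argument, that separability of $C_p(X,\mathbb{I})$ forces $X$ to admit a weaker separable metrizable topology; the point-separation step is legitimate since Tikhonov is a standing assumption in the paper, and the one-sided intervals $[0,1/3)$ and $(2/3,1]$ are indeed open in $\mathbb{I}$. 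Each approach has its merits: the paper's proof is entirely self-contained and elementary, while yours yields a strictly stronger conclusion, namely that $C_p(X)$ itself is separable, so that separability of $C_p(X,\mathbb{I})$, of $C_p(X,(0,1))$, and of $C_p(X)$ are all equivalent. On the other hand, your diagonal-map step essentially reproves one implication of \cite[S.174]{Tk}, so your argument could be streamlined by citing that characterization for both directions rather than redoing half of it.
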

\begin{proof}
Let $A$ be a countable dense subset of $C_p(X,\mathbb{I})$. One can easily verify that
$$D = \{(1-1/n)f + 1/(2n): f\in A,\ n=1,2,\dots\}\,,$$
is a dense subset of $C_p(X,(0,1))$.
\end{proof}

\begin{proof}[Proof of Theorem \ref{characterization_metr_I}]
For a discrete space $X$ we have $C_p(X,\mathbb{I}) = \mathbb{I}^X$, hence the  ``if'' part of the theorem follows directly from Theorem \ref{characterization_products}.

To show the  ``only if'' part, assume, towards a contradiction, that $C_p(X,\mathbb{I})$ is \textsf{CDH} and $X$ is not discrete. Since $X$ contains a nontrivial convergent sequence, by Proposition \ref{Cp(X,I)_meager}, the space $C_p(X,\mathbb{I})$ is meager. Obviously, $C_p(X,\mathbb{I})$ contains a copy of $\mathbb{I}$ (constant functions),
hence also a copy of the Cantor set. By Lemma \ref{sep_sep}, the space  $C_p(X,(0,1))$ is separable.
Since this space is homeomorphic to $C_p(X)$, by Lemma \ref{seq_crowded}, it contains a sequentially crowded countable dense subset $D$.
Moreover, using the fact that the space
$C_p(X,(0,1))$ is dense in $C_p(X,\mathbb{I})$, the set $D$ is also dense in $C_p(X,\mathbb{I})$.
Now, the desired contradiction follows from Lemma \ref{lemma_meager_not_CDH}.
\end{proof}

\begin{remark}
Using a similar method as above, one can also prove that, for a zero-dimensional metrizable space $X$, the space $C_p(X,\{0,1\})$ is \textsf{CDH} if and
only if $X$ is discrete of cardinality less than $\mathfrak{p}$. 
\end{remark}

Recall that $C_p^\ast(X)$ is the subspace of $C_p(X)$ consisting of bounded functions. It is well known that for an infinite space $X$, $C_p^\ast(X)$ is not a Baire space (cf. Proposition \ref{weak_not_Baire}).

\begin{cor}
For no infinite space $X$, the space $C_p^\ast(X)$ is \textsf{CDH}.
\end{cor}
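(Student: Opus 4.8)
The plan is to derive the corollary directly from two facts already available: Corollary~\ref{CDH_implies_Baire_tvs}, which says that every \textsf{CDH} topological vector space is a Baire space, and the remark recalled just before the statement, namely that $C_p^\ast(X)$ is not a Baire space whenever $X$ is infinite. The only thing one needs to observe is that $C_p^\ast(X)$ genuinely is a topological vector space, so that Corollary~\ref{CDH_implies_Baire_tvs} is applicable.

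First I would note that $C_p^\ast(X)$ is a topological vector space: the bounded continuous functions form a linear subspace of $C_p(X)$ (a finite linear combination of bounded functions is again bounded), and this subspace carries the subspace topology induced from the linear topology of $C_p(X)$, so addition and scalar multiplication remain continuous on it.

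For completeness I would also indicate why $C_p^\ast(X)$ fails to be Baire for infinite $X$, exhibiting it as an instance of Proposition~\ref{weak_not_Baire}. Equip $C^\ast(X)$ with the supremum norm. Since $X$ is an infinite Tikhonov space, finite subsets of $X$ are closed, so complete regularity yields, for any $k$ distinct points of $X$, $k$ linearly independent bounded continuous functions; hence $C^\ast(X)$ is infinite-dimensional. Each evaluation $\delta_x\colon f\mapsto f(x)$ is a bounded linear functional, and the linear span $F=\lin\{\delta_x:x\in X\}\subseteq (C^\ast(X))^\ast$ separates points of $C^\ast(X)$. The pointwise topology on $C^\ast(X)$ is exactly the weak topology $\sigma(C^\ast(X),F)$ generated by $F$, so Proposition~\ref{weak_not_Baire} applies and shows that $C_p^\ast(X)$ is meager, in particular not a Baire space.

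Finally I would conclude by contradiction: were $C_p^\ast(X)$ \textsf{CDH}, then, being a topological vector space, it would be a Baire space by Corollary~\ref{CDH_implies_Baire_tvs}, contradicting the previous paragraph. Hence $C_p^\ast(X)$ is not \textsf{CDH}. I expect no real obstacle here; the argument is essentially a one-line deduction, and the only care needed is the routine identification of the pointwise topology on $C^\ast(X)$ with a weak topology of the form required by Proposition~\ref{weak_not_Baire}, together with the (equally routine) infinite-dimensionality of $C^\ast(X)$ for infinite $X$.
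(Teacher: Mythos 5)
Your proof is correct and follows exactly the route the paper intends: the paper leaves the corollary as an immediate consequence of Corollary~\ref{CDH_implies_Baire_tvs} together with the remark (justified via Proposition~\ref{weak_not_Baire}) that $C_p^\ast(X)$ is not a Baire space for infinite $X$. Your additional verifications --- that $C^\ast(X)$ is an infinite-dimensional normed space under the sup norm, that the evaluations $\delta_x$ span a point-separating subspace of the dual, and that the pointwise topology is the weak topology they generate --- are precisely the routine details the paper's ``cf.'' gestures at, and they are carried out correctly.
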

\medskip

Our next theorem and corollary slightly strengthen the results of
Osipov, cf.\ \cite[Sec.\ 8]{HG} and \cite{Os}. Their statements use
the notion of $\gamma$-set; here is one way of
phrasing this notion: A space $X$ is a \textit{$\gamma$-set} if for
any open $\omega$-cover $\mathcal{U}$ of $X$, there is a sequence
$(U_n)_{n\in\omega}$, $U_n\in \mathcal{U}$, such that every point
$x\in X$ belongs to all but finitely many $U_n$'s. Recall that a
cover $\mathcal{U}$ of a space $X$ is an \textit{$\omega$-cover} if,
for any finite subset $F$ of $X$, there exists $U\in \mathcal{U}$
such that $F\subseteq U$.

\begin{thm}\label{Osipov}
Let $X$ be a space such that $X^n$ is hereditary Lindel\"of for every $n\in\omega$. If $X$ is not a $\gamma$-set, then  $C_p(X)$ is not \textsf{US}.
\end{thm}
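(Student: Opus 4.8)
The plan is to connect the failure of the $\gamma$-set property to the property (B) of $C_p(X)$, and then invoke Theorem \ref{US_no_B}. Recall that $C_p(X)$ is a topological vector space, so by that theorem it suffices to show that $C_p(X)$ has the property (B): that is, I would exhibit a countable family $\{A_n:n\in\omega\}$ of closed nowhere-dense subsets covering $C_p(X)$ such that every compact $K\subseteq C_p(X)$ is contained in some $A_n$. The natural source of such a family is the well-known duality, going back to Gerlits and Nagy, between combinatorial covering properties of $X$ and boundedness/compactness phenomena in $C_p(X)$. The hypothesis that $X$ is not a $\gamma$-set should be exactly what produces the required witnessing family.

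First I would recall the Gerlits--Nagy style characterization: $X$ is a $\gamma$-set if and only if $C_p(X)$ has the Fr\'echet--Urysohn property (equivalently, if and only if every pointwise-bounded sequence that ``should'' cluster does so appropriately). Since $X$ is \emph{not} a $\gamma$-set, I would fix an open $\omega$-cover $\mathcal{U}=\{U_m:m\in\omega\}$ witnessing this failure, i.e.\ an $\omega$-cover admitting no sequence eventually covering each point. The hereditary Lindel\"ofness of all finite powers $X^n$ guarantees that I may take $\mathcal{U}$ countable and, more importantly, lets me pass freely between $\omega$-covers of $X$ and covers of the finite powers, which is the standard technical device in this circle of results. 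For each $m$, associate a continuous function or a closed condition in $C_p(X)$ encoding ``membership'' relative to $U_m$; the canonical choice is to use functions supported away from the complements of the $U_m$, or level sets of the form $\{f:\ |f|\le n\text{ on some tail determined by }\mathcal U\}$.

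The concrete construction I would carry out is to set, for each $n$,
$$A_n=\{f\in C_p(X): f[X\setminus U_m]\subseteq[-n,n]\text{ for all }m\ge n\}$$
or a variant thereof, chosen so that each $A_n$ is closed (an intersection of preimages of closed intervals under evaluation maps), nowhere-dense (because on the cofinally many points outside any given $U_m$ the value is pinned into a bounded interval, while $X$ is infinite), and so that $\bigcup_n A_n=C_p(X)$. The key point is the compactness clause: if $K\subseteq C_p(X)$ is compact, then $K$ is pointwise bounded and, by the Arzel\`a--Ascoli-type behavior in $C_p$, the restriction of $K$ to the $\omega$-cover data is uniformly controlled; the failure of the $\gamma$-set property is precisely what forbids $K$ from escaping every $A_n$, forcing $K\subseteq A_n$ for some $n$. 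Once property (B) is established, Theorem \ref{US_no_B} applies — noting that $C_p(X)$, being a nondegenerate topological vector space, contains a nontrivial convergent sequence — and yields that $C_p(X)$ is not \textsf{US}.

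The main obstacle I anticipate is matching the combinatorial content of ``$X$ is not a $\gamma$-set'' to the topological requirement that every compact $K$ lands in a single $A_n$; getting the quantifiers right so that the chosen family is simultaneously nowhere-dense, covering, and compact-absorbing is delicate, and this is exactly where the hypothesis on hereditary Lindel\"ofness of the powers $X^n$ must be used to regularize the $\omega$-cover. I would expect the cleanest route to route through a known equivalence (as in the cited work of Osipov and in Gerlits--Nagy theory) rephrasing the negation of the $\gamma$-property as the existence of a specific $\omega$-cover with no $\gamma$-subsequence, and then to read off the family $\{A_n\}$ directly from that $\omega$-cover, verifying the three conditions one at a time.
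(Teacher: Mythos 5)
There is a genuine gap, and it sits exactly at the step you flag as ``delicate'': the claim that failure of the $\gamma$-property yields property (B) for $C_p(X)$ is never proved, and it cannot be proved along the lines you sketch. Your candidate family fails both of the requirements it must satisfy. Covering: a function that is unbounded on every complement $X\setminus U_m$ belongs to no $A_n$, and nothing about an $\omega$-cover witnessing non-$\gamma$ prevents such functions from existing (think of an unbounded $f$ on $X=\mathbb{R}$ when the complements $X\setminus U_m$ are unbounded). Compact absorption: in the pointwise topology there is no Arzel\`a--Ascoli-type mechanism --- compactness of $K\subseteq C_p(X)$ gives pointwise boundedness and nothing more. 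Concretely, choose pairwise disjoint nonempty open sets $V_k$, points $x_k\in V_k$ with $x_k\notin U_{m_k}$ where $m_k\to\infty$, and continuous $g_k$ with $g_k(x_k)=k$ and $g_k\equiv 0$ off $V_k$; since each point lies in at most one $V_k$, we get $g_k\to 0$ pointwise, so $K=\{g_k:k\in\omega\}\cup\{0\}$ is compact in $C_p(X)$, yet $K$ escapes every set of the shape you propose. This is precisely the obstruction that confines Proposition \ref{B_weak_general} to topologies in which compacta \emph{are} norm-bounded; for the pointwise topology they are not, and the paper nowhere asserts property (B) for any space of the form $C_p(X)$. So the route through Theorem \ref{US_no_B}, as you set it up, cannot be completed, and the heavy lifting remains undone.

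The paper's actual proof bypasses property (B) entirely and instead exhibits two countable dense subsets of $C_p(X)$ that are not homeomorphic, distinguished by sequential crowdedness. One direction is your Theorem \ref{US_no_B} engine in disguise: by Lemma \ref{seq_crowded}, the separable homogeneous space $C_p(X)$, which contains the nontrivial convergent sequence $(x/n)$, has a sequentially crowded countable dense subset. For the other, from a witnessing $\omega$-cover $\mathcal{U}$ one defines $G=\{f\in C_p(X): f^{-1}(\mathbb{R}\setminus\{0\})\subseteq U\ \text{for some}\ U\in\mathcal{U}\}$, which is dense because $\mathcal{U}$ is an $\omega$-cover; the hypothesis on finite powers is used \emph{only} to make $C_p(X)$ hereditarily separable (via the Zenor--Velichko duality, \cite[II.5.10]{Ar}), so that $G$ contains a countable dense subset $H$. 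Then $D=H\cup\{c_1\}$ is countable and dense, but no sequence from $H$ converges to the constant function $c_1$: supports of such a sequence would produce a $\gamma$-sequence from $\mathcal{U}$, contradicting the choice of $\mathcal{U}$. Hence $D$ is not sequentially crowded and $C_p(X)$ is not \textsf{US}. Note in particular that your reading of the hereditary Lindel\"of hypothesis (``regularizing the $\omega$-cover,'' passing between covers of $X$ and of $X^n$) is not where it enters; it is needed to extract a countable dense subset of the dense set $G$.
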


\begin{cor}\label{cor4.16}
If $X$ is a separable metrizable space which is not a $\gamma$-set, then  $C_p(X)$ is not \textsf{US}.
\end{cor}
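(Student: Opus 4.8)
**The plan is to derive Corollary \ref{cor4.16} from Theorem \ref{Osipov} by verifying that the hypotheses of the theorem are automatically satisfied for separable metrizable spaces.** The key observation is that Theorem \ref{Osipov} requires that every finite power $X^n$ be hereditarily Lindel\"of. For a separable metrizable space $X$, each finite power $X^n$ is again separable and metrizable (the product of finitely many separable metrizable spaces is separable metrizable), and every separable metrizable space is second countable, hence hereditarily Lindel\"of. Thus the Lindel\"of hypothesis comes for free, and the corollary follows immediately from Theorem \ref{Osipov} applied to such $X$.

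\begin{proof}[Proof of Corollary \ref{cor4.16}]
Let $X$ be a separable metrizable space which is not a $\gamma$-set. For each $n\in\omega$, the finite power $X^n$ is again separable and metrizable, hence second countable, and therefore hereditarily Lindel\"of. Thus the hypotheses of Theorem \ref{Osipov} are satisfied, and we conclude that $C_p(X)$ is not \textsf{US}.
\end{proof}

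I expect the proof itself to be entirely routine; the only point requiring any care is confirming that the auxiliary hypothesis of Theorem \ref{Osipov}, namely hereditary Lindel\"ofness of all finite powers, is genuinely free in the separable metrizable setting rather than an extra restriction. This is indeed the case, since separable metrizability is preserved under finite products and implies second countability, which in turn implies hereditary Lindel\"ofness. There is no genuine obstacle here: the content of the corollary lies entirely in Theorem \ref{Osipov}, and the corollary merely specializes it to the most natural class of spaces in which the technical Lindel\"of hypothesis is automatic, so that the single remaining assumption on $X$ is that it fail to be a $\gamma$-set.
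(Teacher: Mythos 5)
Your proof is correct and is precisely the intended derivation: the paper states Corollary \ref{cor4.16} as an immediate specialization of Theorem \ref{Osipov}, relying on exactly the fact you verify, namely that a separable metrizable space is second countable, so each finite power $X^n$ is second countable and hence hereditarily Lindel\"of. No differences from the paper's (implicit) argument.
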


Theorem \ref{Osipov} is an immediate consequence of Lemma
\ref{seq_crowded} and the next lemma. The proof of that lemma
follows closely the argument from the proof of the Gerlits-Nagy
theorem stating that a space $X$ is a  $\gamma$-set if and only if
$C_p(X)$ is a Fr\'echet-Urysohn space, cf.\ \cite[Thm.\ II.3.2]{Ar}.

\begin{lem}
Let $X$ be a space such that $X$ is not a $\gamma$-set and $X^n$ is
hereditary Lindel\"of for every $n\in\omega$. Then $C_p(X)$ contains
a countable dense subset which is not sequentially crowded.
\end{lem}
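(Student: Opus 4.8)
The plan is to adapt the Gerlits--Nagy correspondence between $\gamma$-covers of $X$ and sequences in $C_p(X)$ converging to the zero function $\mathbf{0}$, but to arrange the witnessing set to be \emph{dense}. The target is a countable dense $D\subseteq C_p(X)$ with $\mathbf{0}\in D$ such that no sequence in $D\setminus\{\mathbf{0}\}$ converges to $\mathbf{0}$; since sequential crowdedness is a topological property of the subspace $D$, such a $D$ is not sequentially crowded. Throughout I use that the hypothesis ``$X^n$ hereditarily Lindel\"of for all $n$'' makes $C_p(X)$ hereditarily separable (the Zenor--Velichko duality), so a countable dense set $\{h_n:n\in\omega\}\subseteq C_p(X)$ exists, and also makes every finite power of $X$ Lindel\"of.

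First I would set up the cover. Since $X$ is not a $\gamma$-set, fix an open $\omega$-cover with no $\gamma$-subsequence; no member can be $X$ itself, for otherwise the constant sequence would be a $\gamma$-subcover. Refining each member by cozero sets lying inside it preserves both the $\omega$-cover property and the absence of a $\gamma$-subsequence, and then the Lindel\"ofness of each $X^n$ (via the covers $\{U^n:U\in\mathcal U\}$ of $X^n$) lets me extract a countable subfamily $\mathcal{U}=\{U_m:m\in\omega\}$ that is still an $\omega$-cover with no $\gamma$-subsequence and with each $U_m\neq X$. For each $m$ I fix a continuous $\phi_m:X\to[0,1]$ with $U_m=\{\phi_m>0\}$.

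The heart of the construction is a countable family that is dense yet ``blocks'' convergence to $\mathbf{0}$. For $n,m,\ell\in\omega$ put $t_{m,\ell}=\min(1,\ell\,\phi_m)$ and
$$g_{n,m,\ell}=h_n\cdot t_{m,\ell}+(1-t_{m,\ell}).$$
These are continuous, equal to $1$ off $U_m$ (where $\phi_m=0$), and equal to $h_n$ wherever $\phi_m\geq 1/\ell$ on $U_m$; hence $\{x:|g_{n,m,\ell}(x)|<1\}\subseteq U_m$. I claim $B=\{g_{n,m,\ell}:n,m,\ell\in\omega\}$ is dense: given a basic neighbourhood determined by a finite set $F$ and target values, choose $U_m\supseteq F$ (the $\omega$-cover), then $h_n$ matching the target on $F$ (density of $\{h_n\}$), then $\ell$ so large that $\phi_m\geq 1/\ell$ on the finite set $F$; then $g_{n,m,\ell}=h_n$ on $F$, so it lies in the neighbourhood. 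Each $g_{n,m,\ell}$ equals $1$ on the nonempty set $X\setminus U_m$, so $\mathbf{0}\notin B$ and $D=B\cup\{\mathbf{0}\}$ is a countable dense set.

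Finally I would verify the blocking property. Suppose a sequence $(g_{n_i,m_i,\ell_i})_i$ in $B$ converges to $\mathbf{0}$. For every $x$ and every $i$ with $x\notin U_{m_i}$ we have $g_{n_i,m_i,\ell_i}(x)=1$, so pointwise convergence to $0$ forces each $x$ to lie in $U_{m_i}$ for all but finitely many $i$; thus $(U_{m_i})_i$ is a $\gamma$-cover drawn from $\mathcal{U}$. If infinitely many $U_{m_i}$ are distinct, this is a $\gamma$-subcover of $\mathcal{U}$, a contradiction; if only finitely many distinct sets occur, one of them appears infinitely often and, eventually containing every $x$, must equal $X$, again a contradiction. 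Hence no sequence in $D\setminus\{\mathbf{0}\}$ converges to $\mathbf{0}$, and $D$ is the desired non-sequentially-crowded dense set. The main obstacle is the passage from open cover members to continuous functions: characteristic functions of the $U_m$ are unavailable, and the role of $t_{m,\ell}=\min(1,\ell\,\phi_m)$ is precisely to reconcile ``equal to the dense-set value on any prescribed finite subset of $U_m$'' with ``identically $1$ off $U_m$'' while keeping the index set countable.
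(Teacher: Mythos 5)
Your proof is correct, and it follows the same Gerlits--Nagy blueprint as the paper's proof, but with a genuinely different mechanism at the one step that requires care, namely producing a \emph{countable} dense blocking set. Up to the affine substitution $f\mapsto c_1-f$, your blocking device coincides with the paper's: the paper works with the set $G$ of all $f$ with $f^{-1}(\mathbb{R}\setminus\{0\})\subseteq U$ for some $U$ in the witness $\omega$-cover and shows no sequence from $G$ converges to the constant function $c_1$, exactly as your functions $g_{n,m,\ell}$, which equal $1$ off $U_m$, block convergence to $\mathbf{0}$; in both cases a convergent sequence would yield a $\gamma$-sequence selected from the witness cover. Where you diverge is in how countability and density are reconciled. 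The paper's $G$ is uncountable, and a countable dense $H\subseteq G$ is extracted softly from the \emph{hereditary} separability of $C_p(X)$ (this is where the hypothesis on the finite powers $X^n$ enters, via \cite[II.5.10]{Ar}); the set $D=H\cup\{c_1\}$ then works. You instead construct the countable dense set explicitly: you first shrink the witness cover to a countable cozero $\omega$-subcover --- correctly using that $\{U^n:U\in\mathcal{U}\}$ covers the Lindel\"of space $X^n$, and that cozero refinement preserves the $\omega$-cover property (finite unions of cozero sets are cozero) and the absence of $\gamma$-sequences --- and then interpolate $g_{n,m,\ell}=h_n t_{m,\ell}+(1-t_{m,\ell})$ between a countable dense set $\{h_n\}$ and the constant $1$. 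This buys a mild economy: your argument needs only separability of $C_p(X)$ rather than hereditary separability, with Lindel\"ofness of the finite powers doing the remaining work through the countable $\omega$-subcover; the paper's route is shorter but leans on the stronger black box. Two small remarks: your closing case analysis on repeated members of $(U_{m_i})$ is unnecessary under the paper's definition of $\gamma$-set (the selected sequence may repeat members, so any $\gamma$-sequence from the witness is already a contradiction), though it is harmless and makes the argument robust to the definitional variant requiring distinct members; and all the local verifications (density of $B$ by choosing $m$, then $n$, then $\ell$ with $\phi_m\ge 1/\ell$ on $F$; $\mathbf{0}\notin B$ because each $U_m\neq X$; convergence in the subspace $D$ to $\mathbf{0}$ agreeing with pointwise convergence in $C_p(X)$) check out.
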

\begin{proof}
Recall that the assumption that all finite powers of $X$ are
hereditary Lindel\"of implies that $C_p(X)$ is hereditary separable
(see \cite[II.5.10]{Ar}).

Let $\mathcal{U}$ be an open  $\omega$-cover of $X$ witnessing the
fact that $X$ is not a $\gamma$-set. Define
$$G = \{f\in C_p(X): \exists (U\in \mathcal{U})\ f^{-1}(\mathbb{R}\setminus\{0\})\subseteq U \}\,.$$
Using the fact that $\mathcal{U}$ is an  $\omega$-cover one can
easily verify that the set $G$ is dense in $C_p(X)$. Take a
countable dense subset $H$ of $G$ and put $D = H\cup\{c_1\}$. We
will prove that $D$ is not sequentially crowded. To this end, we
will show that no sequence $(f_n)$ of functions from $D$, distinct
from $c_1$, converges to $c_1$; here, $c_1$ is a constant function
with value of $1$ . Suppose the contrary. For each $n$ find $U_n\in
\mathcal{U}$ with $f_n^{-1}(\mathbb{R}\setminus\{0\})\subseteq U_n$.
Given $x\in X$, we have $f_n(x)\to 1$, hence $f_n(x) \ne 0$ for $n >
n_0$. It follows that $x\in U_n$ for $n > n_0$, a contradiction with
our assumption on $\mathcal{U}$.
\end{proof}

Corollary \ref{cor4.16} suggests the following:

\begin{que}
Let $X$ be an uncountable separable metrizable space. Is it true that $C_p(X)$ is not \textsf{US}?
\end{que}
\medskip

Let us also comment on Question 2.5 from \cite{HG}. Hern\'andez-Guti\'errez asked if, for an  uncountable Polish  space $X$, the space $C_p(X)$ has a countable dense subset without nontrivial convergent
sequences. An affirmative answer to this question follows immediately from \cite[Cor.\ 3.20]{T1} and the fact that, for such $X$, the space $C_p(X)$ is
hereditary separable (it has a countable network).
\medskip

We conclude this section with two observations related to the characterization of Hern\'andez-Guti\'errez \cite[Theorem 1.2]{HG} of the \textsf{CDH} property of $C_p(N_F)$ spaces that we quoted in the introduction.

\begin{prp}
It is consistent (relative to \textsf{ZFC}) that there exists $2^{2^\omega}$ many \textsf{CDH} spaces  $C_p(N_F)$ which are pairwise non-homeomorphic.
\end{prp}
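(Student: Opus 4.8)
The plan is to work under the Continuum Hypothesis and to combine the characterization of \textsf{CDH} spaces $C_p(N_F)$ recalled in the introduction with a soft counting argument. First I would invoke the classical fact that under \textsf{CH} there exist $2^{2^\omega}$ pairwise distinct $P$-points on $\omega$. Each $P$-point $F$ is a nonmeager $P$-filter, so by the equivalence (a)$\Leftrightarrow$(e) the space $C_p(N_F)$ is \textsf{CDH}. Moreover, since $N_F$ is countable, every $C_p(N_F)$ is a separable metrizable space, and as a point-set it is a subspace of the fixed product $\mathbb{R}^{N}$, where $N=\omega\cup\{\infty\}$ is the underlying set common to all the $N_F$. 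Thus I obtain a family of $2^{2^\omega}$ \textsf{CDH} spaces, and it remains to see that this family contains $2^{2^\omega}$ homeomorphism types.

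The key observation is that the assignment $F\mapsto C_p(N_F)$ is injective on ultrafilters already at the level of point-sets: if $F\neq G$ are ultrafilters, choose $A\in F\setminus G$ (so $\omega\setminus A\in G$) and let $g\colon N\to\mathbb{R}$ be $0$ on $A\cup\{\infty\}$ and $1$ on $\omega\setminus A$; then $g\in C_p(N_F)\setminus C_p(N_G)$, so $C_p(N_F)\neq C_p(N_G)$ as subsets of $\mathbb{R}^{N}$. I would then bound the size of each homeomorphism class by a cardinality count that avoids any attempt to reconstruct $F$ from $C_p(N_F)$. Fix a separable metrizable space $Z$ with a countable dense set $D\subseteq Z$; a continuous map $Z\to\mathbb{R}^{N}$ is determined by its restriction to $D$, and there are at most $|\mathbb{R}^{N}|^{|D|}=\mathfrak{c}^{\aleph_0}=\mathfrak{c}$ such restrictions. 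Hence there are at most $\mathfrak{c}$ continuous maps $Z\to\mathbb{R}^{N}$, so at most $\mathfrak{c}$ of our spaces $C_p(N_F)$ (which are pairwise distinct as point-sets) can be homeomorphic to $Z$.

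Consequently each homeomorphism class meets our family in at most $\mathfrak{c}=2^\omega$ members, so the family of $2^{2^\omega}$ spaces splits into at least $2^{2^\omega}/2^\omega=2^{2^\omega}$ homeomorphism classes; picking one representative per class yields $2^{2^\omega}$ pairwise non-homeomorphic \textsf{CDH} spaces $C_p(N_F)$. The point I expect to require the most care is precisely the temptation to prove non-homeomorphism directly: since $t$-equivalence need not preserve the homeomorphism type of the underlying space, one cannot hope to recover $N_F$ (equivalently $F$ up to a permutation of $\omega$) from $C_p(N_F)$, and the argument instead sidesteps this via the counting above together with the set-theoretic input that $2^{2^\omega}$ distinct $P$-points exist --- which is where the consistency assumption genuinely enters. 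Note also that I only use \emph{distinctness} of the $P$-points, not non-isomorphism, which is what makes the cardinality bookkeeping clean.
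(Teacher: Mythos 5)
Your proposal is correct and takes essentially the same route as the paper: under \textsf{CH} one gets $2^{2^\omega}$ many $P$-points, the Hern\'andez-Guti\'errez characterization yields \textsf{CDH}, the spaces $C_p(N_F)\subseteq\mathbb{R}^{N}$ are pairwise distinct as point-sets (your single separating characteristic function is the same idea the paper packages as a canonical copy of $F$ inside $C_p(N_F)$), and each separable metrizable homeomorphism type is realized by at most $\mathfrak{c}=2^\omega$ of these subspaces. The only difference is that you spell out the standard counting argument which the paper merely cites, so there is nothing to correct.
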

\begin{proof}
It is well known that the continuum hypothesis or Martin's Axiom implies the existence of  $2^{2^\omega}$ many $P$-ultrafilters $F$ on $\omega$ (cf.\ \cite{Bl}), and each ultrafilter $F$ is nonmeager. By \cite[Theorem 1.2]{HG} all corresponding function spaces $C_p(N_F)$ are \textsf{CDH}. One can easily observe that different filters $F,F'$ generate distinct spaces $C_p(N_F), C_p(N_{F'})$. Indeed, the intersection $\{f\in C_p(N_F): f(\infty)=0\}\cap 2^{N_F}$ is a canonical copy of $F$ in $C_p(N_F)$.  Now, we can repeat the standard counting argument (cf.\ \cite{Ma3}). We have $2^{2^\omega}$ different \textsf{CDH} spaces $C_p(N_F)$. For a given
filter $F$, the space $C_p(N_F)$, being a separable metrizable space, can be homeomorphic to no more than $2^\omega$ other spaces $C_p(N_{F'})$.
\end{proof}

The proof of the next fact follows very closely the argument from the proof of Lemma 3.2 in \cite{Ma}, for the reader's convenience we repeat it here.

\begin{prp}
Let $(F_n)$ be a sequence of nonmeager $P$-filters on $\omega$.
Then the product $\prod_{n\in\omega} C_p(N_{F_n})$ is \textsf{CDH}.
\end{prp}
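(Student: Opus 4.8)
The plan is to first rewrite the product in a more convenient form. Since the $C_p$ functor carries topological sums to products, there is a natural homeomorphism
$$\prod_{n\in\omega} C_p(N_{F_n}) \cong C_p(Z), \qquad Z = \bigoplus_{n\in\omega} N_{F_n},$$
where $Z$ is a countable space whose only non-isolated points are the points $\infty_n$ coming from the summands $N_{F_n}$, the neighbourhood filter at $\infty_n$ being $F_n$. By the characterization recalled in the introduction (conditions (a)--(e), see \cite[Thm.\ 1.2]{Ma} and \cite{HG}), each factor $C_p(N_{F_n})$ is \textsf{CDH} and hereditarily Baire. Since $Z$ is countable, $C_p(Z)$ is a separable metrizable space, so to prove it is \textsf{CDH} it suffices to show: given any two countable dense subsets $D,E\subseteq C_p(Z)$, there is a self-homeomorphism $h$ of $C_p(Z)$ with $h(D)=E$.

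Second, I would view $C_p(Z)$ as a dense linear subspace of $\mathbb{R}^Z\cong\mathbb{R}^\omega$, namely the subspace cut out by the filter-limit constraints $f(\infty_n)=\lim_{F_n} f\upharpoonright \omega_n$. The homeomorphism $h$ will be produced by a back-and-forth construction inside $C_p(Z)$, following closely the argument of \cite[Lemma 3.2]{Ma}. Enumerating $D=\{d_k\}$ and $E=\{e_k\}$, I would build an increasing chain of finite partial bijections $D_s\to E_s$, each realized by a homeomorphism of $C_p(Z)$, alternately forcing the next unused $d_k$ into the domain and the next unused $e_k$ into the range, so that the union is a bijection of $D$ onto $E$. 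Each elementary move is a homeomorphism of $\mathbb{R}^Z$ that preserves the subspace $C_p(Z)$, chosen so that its effect on the coordinate blocks $N_{F_n}$ is controlled well enough for the infinite composition to converge to a single self-homeomorphism $h$ of $C_p(Z)$.

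The hard part — and the place where the hypotheses on the $F_n$ are essential — is ensuring that the fused map $h$ (together with $h^{-1}$) remains a homeomorphism of $C_p(Z)$, i.e.\ that it respects the infinitely many filter-limit constraints at the points $\infty_n$ simultaneously. A move correcting the matching near one block can disturb the convergence condition governing another $\infty_m$, so the countably many continuity requirements must be met together. This is exactly what the nonmeager $P$-filter property delivers: the $P$-property supplies, for each $F_n$, an almost-containment that merges the countably many tail-requirements into a single filter element, while the hereditary Baireness of each $C_p(N_{F_n})$ (equivalent, by (a)--(c), to $F_n$ being a nonmeager $P$-filter) supplies the Baire-category genericity guaranteeing that a valid extension exists at each step and that the construction never gets stuck. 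Carrying out this diagonalization across the coordinates $n$ and the enumeration index $k$ — precisely the fusion of \cite[Lemma 3.2]{Ma} — yields the desired $h$ and shows that $\prod_{n\in\omega}C_p(N_{F_n})$ is \textsf{CDH}.
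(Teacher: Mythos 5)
Your opening reduction is fine: $C_p$ of a topological sum is the product of the $C_p$'s, so $\prod_{n\in\omega} C_p(N_{F_n})\cong C_p(Z)$ for $Z=\bigoplus_{n\in\omega} N_{F_n}$, and $C_p(Z)$ is separable metrizable. But from that point on the proposal stops being a proof. The entire content of the theorem is concentrated in the back-and-forth ``fusion'' you describe, and you never actually specify it: what the elementary moves are, why each one is a homeomorphism of $\mathbb{R}^Z$ preserving the subspace $C_p(Z)$, why the infinite composition converges to a homeomorphism, and why the nonmeager $P$-property of each $F_n$ guarantees that an admissible extension exists at every stage. These are asserted, not argued. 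For a \emph{single} filter this matching theorem is precisely Hern\'andez-Guti\'errez's Theorem 1.2 of \cite{HG}, which in turn rests on the substantial constructions of \cite{HGH} and \cite{KMZ}; for a space $Z$ with countably many non-isolated points no such off-the-shelf result exists, and your sketch would amount to reproving \cite{HG} in a strictly harder setting. Note also that CDH-ness of each factor (which you correctly extract from conditions (a)--(e)) is of no direct help: CDH is not productive in general, so some genuine new input is unavoidable. Finally, your citation is off: Lemma 3.2 of \cite{Ma} contains no back-and-forth fusion --- it concerns hereditary Baireness of such products and proceeds by the product-filter reduction described below.

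The route the paper takes avoids any new construction and isolates exactly the input you left unexamined. One first replaces each factor by the hyperplane $c_{F_n}=\{f\in C_p(N_{F_n}): f(\infty)=0\}$, homeomorphic to $C_p(N_{F_n})$ by \cite[Lemma 2.1]{Ma1}, and then identifies $\prod_{n\in\omega} c_{F_n}$ with $c_F$, where $F=\prod_{n\in\omega} F_n$ is viewed as a filter on $\omega\times\omega$ by identifying $(A_n)_{n\in\omega}$ with $\bigcup\{A_n\times\{n\}: n\in\omega\}$. The crucial fact --- the one your appeal to ``the nonmeager $P$-filter property'' silently presupposes --- is that this product filter $F$ is again a nonmeager $P$-filter; this follows because $F$ is a hereditarily Baire subspace of $2^{\omega\times\omega}$ by \cite[Corollary 2.4]{Ma} (cf.\ Shelah's Fact 4.3 in \cite{Sh}), and hereditary Baireness of a filter is equivalent to its being a nonmeager $P$-filter. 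A single application of \cite[Theorem 1.2]{HG} to $F$ then yields that $c_F$, hence the product, is \textsf{CDH}. In short: the missing idea in your proposal is the reduction of countably many filters to one product filter together with the (nontrivial, and not automatic from the $P$-property alone) verification that nonmeager $P$-filters are countably productive in this sense; without it, your simultaneous diagonalization across the blocks $N_{F_n}$ has no justification.
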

\begin{proof}
First, recall that for every filter $F$ the space $C_p(N_F)$
is homeomorphic to the space $c_F=\{f\in C_p(N_F): f(\infty)=0\}$, see
\cite[Lemma 2.1]{Ma1}. Hence the space $\prod_{n\in\omega} C_p(N_{F_n})$ is
homeomorphic to the product $\prod_{n\in\omega} c_{F_n}$. We can consider the
product $\prod_{n\in\omega} F_n$ as a
filter $F$ on $\omega\times\omega$. To do this, we identify $(A_n)\in \prod_{n\in\omega} F_n$ with
$\bigcup\{A_n\times\{n\}: n\in\omega\}\subset\omega\times\omega$.
By \cite[Corollary 2.4]{Ma} (cf.  \cite[p.\ 327, Fact 4.3]{Sh}), $F$ is a hereditary Baire space, hence a nonmeager $P$-filter. Therefore, by \cite[Theorem 1.2]{HG}, $C_p(N_F)$ and $c_F$ are \textsf{CDH}. One can easily
verify that $c_F$ is homeomorphic to $\prod_{n\in\omega} c_{F_n}$.
\end{proof}

\section{Bernstein-like direct sum decompositions of Roman Pol}

In this section we show that there are examples of hereditarily Baire
m.l.s. which are not \textsf{CDH}.
Let us recall the notion introduced by R.\ Pol in \cite{Po}. Let $E$ be an infinite-dimensional separable complete m.l.s..
A direct sum decomposition $E = V_1\oplus V_2$ is called a  \emph{Bernstein-like direct sum decomposition} of $E$,
if every linearly independent Cantor set $C$ (i.e., a topological copy of the Cantor set) in $E$ intersects both subspaces $V_i$.
R.\ Pol showed that every infinite-dimensional separable complete m.l.s. admits a Bernstein-like direct sum decomposition.
We will also use Lemma 3.1 from \cite{Po}:

\begin{lem}\label{lem_Pol}
Let $A$ be an analytic set in a separable complete m.l.s. $E$.
If $A$ contains an uncountable linearly independent set, then A contains a linearly independent Cantor set.
\end{lem}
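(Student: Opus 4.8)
The plan is to realize the desired set as the image of $2^\omega$ under a continuous injection, built by a fusion (Cantor scheme) argument carried out not in $A$ itself but in a Polish ``purified'' domain where linear independence becomes generic. Since $E$ is a Polish topological vector space and $A$ is (nonempty) analytic, I would fix a continuous surjection $\varphi\colon\omega^\omega\to A$, and lift the given uncountable independent set $L\subseteq A$ to $L^\#\subseteq\omega^\omega$ by choosing one $\varphi$-preimage of each point, so that $\varphi\upharpoonright L^\#$ is injective and $\varphi(L^\#)=L$ is linearly independent.

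First I would record the only local fact about the linear structure that is needed: for every $k$ the set of linearly independent $k$-tuples is open in $E^k$. Indeed, the dependent $k$-tuples are precisely the projection to $E^k$ of the closed set $\{(x,c)\in E^k\times S^{k-1}:\sum_i c_ix_i=0\}$ along the compact sphere $S^{k-1}\subseteq\mathbb{R}^k$, and projection along a compact factor is a closed map; hence the dependent tuples form a closed set. Consequently, once finitely many vectors are independent so is every sufficiently small perturbation of them, which is what lets the construction ``lock in'' independence at each finite stage.

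The key step is a Cantor--Bendixson-type derivative that manufactures richness while preserving uncountability. Working in the Polish space $\omega^\omega$, I would iteratively delete from the current closed set $P_\xi$ every relatively open $U\subseteq P_\xi$ whose image $\varphi(U)$ is contained in a finite-dimensional subspace of $E$, intersecting at limit stages. By second countability each deletion round is a union of only countably many such $U$, so $\varphi$ of the deleted part lies in a countable union of finite-dimensional subspaces; since such a union meets the independent set $L$ in only countably many points, each round discards at most countably many points of $L^\#$. As this is a strictly decreasing transfinite sequence of closed subsets of a second countable space, it stabilizes at a countable ordinal, so in total only countably many points of $L^\#$ are lost, and the stabilized closed (hence Polish) set $P$ still contains (uncountably many) points of $L^\#$. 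By construction $P$ satisfies the crucial property
$$\text{(P)}\qquad \varphi(U)\not\subseteq W\ \text{ for every nonempty relatively open } U\subseteq P \text{ and every finite-dimensional } W\subseteq E.$$

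Finally I would run the Cantor scheme inside $P$, constructing nonempty relatively open $U_u\subseteq P$ for $u\in 2^{<\omega}$ with $\overline{U_{u0}}\cup\overline{U_{u1}}\subseteq U_u$, $\overline{U_{u0}}\cap\overline{U_{u1}}=\emptyset$, and $\operatorname{diam}U_u\le 2^{-|u|}$, maintaining the invariant $(\star_n)$: for every choice of points $t_u\in U_u$ $(u\in 2^n)$ the vectors $\{\varphi(t_u):u\in 2^n\}$ are linearly independent (starting from $U_\emptyset=P$ and a point with $\varphi\neq 0$, which (P) provides with $W=\{0\}$). To pass from level $n$ to $n+1$, I would first pick, for each $u\in 2^n$, two points $a_u,b_u\in U_u$ making the whole family $\{\varphi(a_u),\varphi(b_u):u\in 2^n\}$ independent; this can be done greedily, choosing the images one at a time outside the finite-dimensional span of those already selected, which property (P) always permits since no $\varphi(U_u)$ lies in a finite-dimensional subspace. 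Using that independence of a finite tuple is an open condition in $E^{2^{n+1}}$ together with the continuity of $\varphi$, I would then shrink to relatively open $U_{u0}\ni a_u$ and $U_{u1}\ni b_u$ small enough that every selection of one point from each remains independent, securing $(\star_{n+1})$. Since $P$ is completely metrizable and the diameters tend to $0$, each branch $\alpha\in 2^\omega$ yields a single point $f(\alpha)=\bigcap_n\overline{U_{\alpha|n}}\in P$, defining a continuous injection $f\colon 2^\omega\to P$; applying $(\star_n)$ for $n$ large enough to separate any given finite set of branches shows their $\varphi\circ f$-images are independent, so $\varphi\circ f\colon 2^\omega\to A$ is a continuous injection with linearly independent image, i.e.\ a linearly independent Cantor set. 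The hard part is exactly the derivative step: Mycielski-type theorems readily yield an independent Cantor set in all of $E$, but confining it to the possibly meager set $A$ is the genuine difficulty, and it is overcome by the observation that an uncountable independent set cannot be absorbed by countably many finite-dimensional subspaces, which is what survives the purification and delivers property (P).
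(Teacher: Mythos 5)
Your proof is correct, so let me begin by pointing out that the paper itself contains no proof of this statement: it is quoted as Lemma 3.1 of Pol \cite{Po}, so the only meaningful comparison is with Pol's original argument. Your proof shares its skeleton with that argument: fix a continuous surjection $\varphi\colon\omega^\omega\to A$, observe that linearly independent $k$-tuples form an open subset of $E^k$ (your normalization over the compact sphere $S^{k-1}$ is exactly the standard device), use the pigeonhole fact that a finite-dimensional subspace meets a linearly independent set in at most finitely many points, and run a Cantor scheme whose level-$n$ invariant $(\star_n)$ guarantees that independence survives passage to the limit points of branches --- this last point is essential, since the branch limits need not lie in the lifted copy $L^{\#}$ of $L$, so one cannot rely on the independence of $L$ itself at the limit stage. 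Where you genuinely deviate is in how you ensure the scheme never gets stuck: your transfinite Cantor--Bendixson-type derivative, with its Lindel\"of counting of deleted points, is sound, but it is heavier machinery than necessary. The same property (P) can be had in a single step: discard the (at most countably many) points of $L^{\#}$ that are not condensation points of $L^{\#}$ and pass to the closure $P$ of what remains; then every nonempty relatively open subset of $P$ contains uncountably many points of $L^{\#}$, whose $\varphi$-images form an uncountable subset of $L$ and hence cannot lie in any finite-dimensional subspace. So the derivative buys nothing extra here, though it does have the mild virtue of isolating property (P) as the exact hypothesis the fusion argument needs, independently of how it is obtained. One cosmetic blemish: with $U_\emptyset = P$ the invariant $(\star_0)$ can fail, since $P$ may contain points where $\varphi$ vanishes; either shrink $U_\emptyset$ around a point with $\varphi\neq 0$, as your parenthetical suggests, or note that the invariant is freshly established at every level $n\geq 1$ and only large $n$ is invoked when checking independence of the images of finitely many branches. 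Neither affects correctness.
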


\begin{lem}\label{closed_Q}
Let $Q$ be a closed copy of the rationals $\mathbb{Q}$ in a topological vector space $V$.
Then, for any nonempty relatively open subset $U$ of $Q$, the linear subspace $\lin U$ (that is, a vector subspace of $V$ spanned by $U$) is infinite-dimensional.
\end{lem}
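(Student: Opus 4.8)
The plan is to argue by contradiction, assuming $\lin U$ is finite-dimensional and deriving that $U$ would have to be a countable, crowded, completely metrizable space, which is impossible. The first preliminary step is to pin down the intrinsic topology of $U$. Since $Q$ is a copy of $\mathbb{Q}$ it is crowded, and any nonempty relatively open subset of a crowded space is again crowded: if a point were isolated in $U$, then, $U$ being open in $Q$, that point would be isolated in $Q$ as well. Hence $U$ is a nonempty countable crowded space, and I will only need these three properties of it.

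Next I would assume, for contradiction, that $\dim\lin U = n < \infty$. Because all our spaces are Tikhonov, $V$ is Hausdorff, so every finite-dimensional linear subspace of $V$ is topologically isomorphic to $\mathbb{R}^n$; in particular $\lin U$ is closed in $V$ and is a Polish (separable completely metrizable) space. I then control the closure $\overline{U}$ of $U$ taken in $V$. On one side $U\subseteq \lin U$ and $\lin U$ is closed, whence $\overline{U}\subseteq \lin U$; on the other side $U\subseteq Q$ and $Q$ is closed in $V$, whence $\overline{U}\subseteq Q$, so $\overline{U}$ is \emph{countable}. Thus $\overline{U}$ is a closed subspace of the Polish space $\lin U$, hence itself a countable Polish space. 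Since $U$ is relatively open in $Q$ and $\overline{U}\subseteq Q$, the set $U$ is open in $\overline{U}$, and an open subspace of a Polish space is Polish; so $U$ is a countable Polish space. But a nonempty countable crowded space is meager in itself (each singleton is closed and, having no isolated points, nowhere dense), contradicting the fact that the completely metrizable space $U$ is a Baire space. This contradiction forces $\lin U$ to be infinite-dimensional.

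I expect the key point, rather than an obstacle, to be the interplay of two hypotheses that keeps $\overline{U}$ both countable and completely metrizable: countability comes precisely from $Q$ being \emph{closed} in $V$ (so that the $V$-closure of $U$ cannot escape $Q$), while complete metrizability comes from finite-dimensionality forcing $\lin U$ to be closed and isomorphic to some $\mathbb{R}^n$. Once both are secured, the collision with the Baire Category Theorem via the crowdedness of $U$ is routine; verifying that $U$ is relatively open in $\overline{U}$ and that crowdedness passes to open subsets are the only small checks that need care.
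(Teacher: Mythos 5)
Your proof is correct and takes essentially the same route as the paper's: both reduce to the fact that a finite-dimensional subspace of a Hausdorff topological vector space is closed and homeomorphic to some $\mathbb{R}^n$, and then derive a Baire-category contradiction from a countable crowded set sitting completably inside it. The only difference is bookkeeping: the paper picks a nonempty clopen $V\subseteq U$, which is a copy of $\mathbb{Q}$ closed in the Polish space $\lin V$, whereas you take the closure $\overline{U}$ in $V$ (countable because $Q$ is closed, Polish because $\lin U$ is closed and Euclidean) and observe that $U$ is open in it --- both ways yielding a countable crowded Polish space, which is impossible.
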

\begin{proof}
Suppose that, for some nonempty relatively open subset $U$ of $Q$,  $\lin U$ is finite-dimensional.
Pick a nonempty clopen subset $V$ of $Q$ contained in $U$. Then the space $F= \lin V$ is finite dimensional,
hence homeomorphic to some Euclidean space $\mathbb{R}^n$. Since $V$ is a topological copy of $\mathbb{Q}$ which is closed in $F$, we arrive at a contradiction.
\end{proof}

\begin{lem}\label{uncountable_lin}
Let $A$ be a nonempty closed subset of a complete m.l.s. $E$, such that, for any nonempty relatively open subset $U$ of $A$, $\lin U$
is infinite-dimensional. Then $A$ contains an uncountable linearly independent set.
\end{lem}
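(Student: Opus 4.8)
The plan is to argue by contradiction: assuming that $A$ has no uncountable linearly independent subset, I will force $A$ into a countable union of finite-dimensional (hence closed) subspaces, and then a Baire category argument applied to the complete metric space $A$ will contradict the hypothesis that every nonempty relatively open subset of $A$ spans an infinite-dimensional space.

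First, suppose towards a contradiction that every linearly independent subset of $A$ is countable. By Zorn's lemma choose a maximal linearly independent subset $B\subseteq A$; by our assumption $B=\{b_n:n\in\omega\}$ is countable (the case of finite $B$ being even easier). Maximality means that no point of $A$ can be adjoined to $B$ while preserving linear independence, which is exactly to say $A\subseteq \lin B=:W$. Writing $W_n=\lin\{b_0,\dots,b_n\}$, each $W_n$ is finite-dimensional and $W=\bigcup_{n\in\omega} W_n$, so that $A=\bigcup_{n\in\omega}(A\cap W_n)$.

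Next I would invoke two standard facts. A finite-dimensional linear subspace of a Hausdorff topological vector space is closed, so each $A\cap W_n$ is closed in $A$. Moreover, since $A$ is a closed subset of the complete m.l.s. $E$, it is itself completely metrizable, hence a Baire space. Applying the Baire Category Theorem to the representation $A=\bigcup_{n\in\omega}(A\cap W_n)$ as a countable union of closed sets, some $A\cap W_{n_0}$ must have nonempty interior in $A$; that is, there is a nonempty relatively open $U\subseteq A$ with $U\subseteq W_{n_0}$, whence $\lin U\subseteq W_{n_0}$ is finite-dimensional. This contradicts the hypothesis on $A$, and the proof is complete.

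The argument is short, and the only point requiring genuine care is the reduction carried out in the first step: translating the absence of an uncountable linearly independent set into the inclusion $A\subseteq W$ with $W$ of countable Hamel dimension, via a maximal independent subset. Everything thereafter is routine, namely the closedness of finite-dimensional subspaces and the Baireness of a closed subset of a complete metric space. Combined with Lemma \ref{lem_Pol}, this lemma delivers a linearly independent Cantor set inside a suitable analytic piece of $A$, which is the eventual goal of the section; that deduction, however, lies beyond the present statement.
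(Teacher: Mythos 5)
Your proof is correct and follows essentially the same route as the paper's: reduce the negation to $A\subseteq\lin B$ for a countable set $B$, write $A$ as a countable union of its intersections with the closed finite-dimensional subspaces $W_n$, and apply the Baire Category Theorem in $A$ (completely metrizable, being closed in $E$) to produce a relatively open $U\subseteq A$ with $\lin U$ finite-dimensional, contradicting the hypothesis. The only difference is cosmetic: you spell out, via Zorn's lemma and maximality, why the spanning set can be taken countable, a step the paper leaves implicit.
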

\begin{proof}
Suppose the contrary, then the subspace $F = \lin A$ is spanned by a countable infinite set $\{x_i: i\in \omega\}$.
Put $F_n = \lin \{x_i: i\le n\},\ n\in \omega$. Each $F_n$ is closed in $F$, and $F = \bigcup_{n\in\omega} F_n$.
By the Baire Category Theorem there is $n_0$ such that $U = \wn_A(F_{n_0}\cap A)\ne \emptyset$.
By our assumption on $A$, $\lin U$ is infinite-dimensional, but it is contained in $F_{n_0}$, a contradiction.
\end{proof}

\begin{prp}\label{Bernstein_HB}
Let $E = V_1\oplus V_2$ be a Bernstein-like direct sum decomposition of a separable infinite-dimensional complete m.l.s. $E$.
Then each summand $V_i$ is a hereditarily Baire space.
\end{prp}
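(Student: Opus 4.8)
The plan is to argue by contraposition through the standard characterization of hereditarily Baire metrizable spaces: a metrizable space is hereditarily Baire if and only if it contains no closed copy of $\mathbb{Q}$. By symmetry it suffices to treat $V_1$. So I assume, towards a contradiction, that $V_1$ is not hereditarily Baire and fix a subspace $Q\subseteq V_1$ which is closed in $V_1$ and homeomorphic to $\mathbb{Q}$. The goal is to manufacture, out of $Q$, a linearly independent Cantor set lying entirely in $E\setminus V_1$; since such a set can meet $V_1$ only in $\{0\}$ while a linearly independent set omits $0$, this will contradict the Bernstein-like property of the decomposition $E=V_1\oplus V_2$.

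First I pass to the ambient complete space. Let $A=\overline{Q}^{\,E}$ be the closure of $Q$ in $E$; being closed in the Polish space $E$, the set $A$ is itself Polish, hence analytic. Since $Q$ is closed in $V_1$ and $V_1\subseteq E$ carries the subspace topology, we have $A\cap V_1=\overline{Q}^{\,E}\cap V_1=\overline{Q}^{\,V_1}=Q$; in particular $A\setminus Q=A\setminus V_1$ is disjoint from $V_1$. Next I verify that $A$ satisfies the hypothesis of Lemma \ref{uncountable_lin}. Let $U$ be a nonempty relatively open subset of $A$. As $Q$ is dense in $A$, the intersection $U\cap Q$ is a nonempty relatively open subset of $Q$, and since $Q$ is a closed copy of $\mathbb{Q}$ in the topological vector space $V_1$, Lemma \ref{closed_Q} gives that $\lin(U\cap Q)$ is infinite-dimensional; a fortiori $\lin U$ is infinite-dimensional. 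Hence Lemma \ref{uncountable_lin} applies to the nonempty closed set $A$ and yields an uncountable linearly independent set $S\subseteq A$.

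Now I remove the countably many troublesome points. Because $Q$ is countable, the set $S\setminus Q$ is still uncountable and linearly independent, and it is contained in $A\setminus Q=A\setminus V_1$. The set $A\setminus Q$ is a $G_\delta$ subset of the Polish space $A$ (it is $A$ minus a countable, hence $F_\sigma$, set), so it is Polish and therefore analytic in $E$. Thus $A\setminus Q$ is an analytic set containing the uncountable linearly independent set $S\setminus Q$, and Lemma \ref{lem_Pol} produces a linearly independent Cantor set $C\subseteq A\setminus Q$. Since $C\subseteq A\setminus V_1\subseteq E\setminus V_1$, we have $C\cap V_1=\emptyset$, contradicting the assumption that $E=V_1\oplus V_2$ is Bernstein-like. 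The identical argument applies to $V_2$, so both summands are hereditarily Baire.

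The main obstacle is exactly the step of keeping the final Cantor set off $V_1$. Pol's Lemma \ref{lem_Pol} only delivers the Cantor set inside an analytic set, and the natural analytic set here, the $E$-closure $A$ of $Q$, necessarily leaks outside $V_1$; moreover no Cantor set can sit inside the countable set $Q\subseteq V_1$ itself. The decisive observation resolving this is that $A$ meets $V_1$ in precisely the countable set $Q$, so discarding $Q$ leaves an analytic set $A\setminus Q$ that is disjoint from $V_1$ yet still large enough, retaining an uncountable linearly independent subset, for Pol's lemma to apply. A secondary point worth checking carefully is the invocation of the implication ``not hereditarily Baire $\Rightarrow$ closed copy of $\mathbb{Q}$'' for the possibly non-analytic subspace $V_1$; this holds for arbitrary metrizable spaces, since the covering of a non-Baire closed set by nowhere dense sets is exactly what forces the branch-limits in the usual Cantor scheme to escape the space, so that the resulting countable dense-in-itself set is genuinely closed.
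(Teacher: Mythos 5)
Your proof is correct and follows essentially the same route as the paper's: Hurewicz's theorem gives a closed copy $Q$ of $\mathbb{Q}$ in $V_i$, then Lemmas \ref{closed_Q} and \ref{uncountable_lin} applied to the closure $A$ of $Q$ in $E$ yield an uncountable linearly independent set, and Lemma \ref{lem_Pol} applied to the analytic set $A\setminus Q$ produces a linearly independent Cantor set disjoint from $V_i$, contradicting the Bernstein-like property. The only differences are expository: you spell out the steps the paper leaves implicit, namely that $A\cap V_i=\overline{Q}^{\,V_i}=Q$ (so $A\setminus Q$ misses $V_i$) and that deleting the countable set $Q$ preserves uncountability of the linearly independent set and analyticity.
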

\begin{proof}
Suppose the contrary, then by the Hurewicz theorem the space $V_i$ contains a closed copy $Q$ of the rationals $\mathbb{Q}$.
Let $A$ be a closure of $Q$ in $E$. For any nonempty relatively open subset $U$ of $A$, $\lin(U\cap Q)$ is infinite-dimensional by Lemma \ref{closed_Q}.
Therefore, Lemma \ref{uncountable_lin} implies that $A$ contains an uncountable linearly independent set. Then the set $A\setminus Q$
is analytic and contains an uncountable linearly independent set. From Lemma \ref{lem_Pol}, we infer that $A\setminus Q$ contains a linearly independent
Cantor set $C$. The set $C$ is disjoint from $V_i$, a contradiction.
\end{proof}

\begin{prp}\label{Bernstein_not_CDH}
Let $E = V_1\oplus V_2$ be a Bernstein-like direct sum decomposition of a separable infinite-dimensional complete m.l.s. $E$.
Then each subspace $V_i$ is not \textsf{CDH}.
\end{prp}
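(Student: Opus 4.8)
The plan is to argue by contradiction and to manufacture, out of the assumed \textsf{CDH} property, a linearly independent Cantor set lying entirely inside one summand --- which is precisely what a Bernstein-like decomposition forbids. Fix an index $i$ and suppose $V_i$ is \textsf{CDH}; write $j$ for the other index. First I would record that $V_i$ is infinite-dimensional: a finite-dimensional summand $V_i$ is closed and admits a closed infinite-dimensional topological complement $W$, and since $W$ is itself an infinite-dimensional complete m.l.s.\ it contains (via Lemmas \ref{uncountable_lin} and \ref{lem_Pol}, applied with $A=W$, for which every nonempty relatively open subset spans all of $W$) a linearly independent Cantor set $C\subseteq W$; as $0\notin C$ and $W\cap V_i=\{0\}$, this $C$ misses $V_i$, contradicting the decomposition. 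Thus $V_i$ is an infinite-dimensional separable metrizable linear space. The conceptual difficulty is that the Cantor sets which obviously sit inside $V_i$ (for instance a Cantor set inside a line $\mathbb{R}v\subset V_i$) are badly linearly dependent, whereas the linear independence available in $V_i$ lives only on totally imperfect sets; the role of \textsf{CDH} is to transport independence onto a genuine Cantor set.

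Concretely, I would first build a countable dense linearly independent subset $D'=\{x_n:n\in\omega\}$ of $V_i$: enumerating a countable base $(W_n)$ of nonempty open subsets of $V_i$, inductively choose $x_n\in W_n\setminus\operatorname{span}\{x_0,\dots,x_{n-1}\}$, which is possible because a finite-dimensional subspace of the infinite-dimensional space $V_i$ is closed with empty interior and hence cannot contain the open set $W_n$. Next pick any Cantor set $K\subset V_i$, say inside a one-dimensional subspace, and apply Lemma \ref{dense_notgdelta} to obtain a countable dense set $D\subset V_i$ with $D\cap K$ dense in $K$. Since $V_i$ is \textsf{CDH}, there is a homeomorphism $h\colon V_i\to V_i$ with $h(D)=D'$. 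Put $Z=h(K)$. Then $Z$ is a Cantor set contained in $V_i$, and $D'\cap Z=h(D\cap K)$ is dense in $Z$; being a subset of $D'$, it is linearly independent. At this point I will have produced a Cantor set $Z\subset V_i$ carrying a dense linearly independent subset.

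It remains to promote $Z$ to an honestly linearly independent Cantor set, and here the work is done by the two lemmas already proved. The set $Z$ is compact, hence closed in $E$ and analytic. For every nonempty relatively open $U\subseteq Z$ the intersection $U\cap(D'\cap Z)$ is infinite (a dense subset of the perfect set $Z$ meets every nonempty relatively open subset in infinitely many points) and linearly independent, so $\operatorname{span}(U)$ is infinite-dimensional. Thus $Z$ satisfies the hypothesis of Lemma \ref{uncountable_lin}, which yields an uncountable linearly independent subset of $Z$; since $Z$ is analytic, Lemma \ref{lem_Pol} then provides a linearly independent Cantor set $C^*\subseteq Z\subseteq V_i$. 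As $0\notin C^*$ and $V_i\cap V_j=\{0\}$, we get $C^*\cap V_j=\emptyset$, contradicting the defining property of the Bernstein-like decomposition that every linearly independent Cantor set meets both summands. This contradiction shows $V_i$ is not \textsf{CDH}, and since $i$ was arbitrary neither summand is \textsf{CDH}. I expect the genuine obstacle to be the bridge between the purely topological \textsf{CDH} hypothesis and the algebraic Bernstein condition --- that is, the idea of using a self-homeomorphism to slide a linearly dependent Cantor set onto one whose dense trace is independent --- while the verification that $\operatorname{span}(U)$ is infinite-dimensional, needed for Lemma \ref{uncountable_lin}, is the one routine point that must not be overlooked.
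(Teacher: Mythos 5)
Your argument is essentially the paper's proof. The paper likewise builds a countable dense linearly independent subset $D$ of $V_i$ (using the fact that every nonempty open subset of $V_i$ has infinite-dimensional span), invokes Lemma \ref{dense_notgdelta} to reduce the problem to showing that no Cantor set $K\subseteq V_i$ has $K\cap D$ dense in $K$, and then, exactly as you do, applies Lemma \ref{uncountable_lin} (relatively open subsets of such a $K$ would contain infinitely many points of $D$, hence span infinite-dimensional subspaces) followed by Lemma \ref{lem_Pol} to extract a linearly independent Cantor set inside $V_i$, contradicting the Bernstein-like property. Your explicit steps --- the homeomorphism $h$ with $h(D)=D'$, the set $Z=h(K)$, and the verification of the hypothesis of Lemma \ref{uncountable_lin} on $Z$ --- are precisely what the paper compresses into the phrase ``it is enough to show that for any copy $K$ of the Cantor set in $V_i$, $K\cap D$ is not dense in $K$.''

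One auxiliary justification you give is wrong as stated, although the fact it supports is true and the repair stays inside your own toolkit. You claim that a finite-dimensional summand $V_i$ ``admits a closed infinite-dimensional topological complement $W$.'' In a general complete m.l.s.\ (which need not be locally convex) this fails: for $0<p<1$ the space $L_p[0,1]$ has trivial dual, so no finite-dimensional subspace is topologically complemented --- there is no continuous projection onto even a single line. But all you actually need is a linearly independent Cantor set missing $V_i$, and your two lemmas give it directly: applying Lemma \ref{uncountable_lin} with $A=E$ (every nonempty open $U\subseteq E$ satisfies $\lin U = E$, since for $u_0\in U$ and any $x\in E$ one has $u_0+tx\in U$ for small $t\ne 0$) yields an uncountable linearly independent set $L\subseteq E$; if $\dim V_i<\infty$ then $V_i$ is closed and $L\cap V_i$ has at most $\dim V_i$ elements, so $L\setminus V_i$ is an uncountable linearly independent subset of the open, hence analytic, set $E\setminus V_i$, and Lemma \ref{lem_Pol} produces a linearly independent Cantor set disjoint from $V_i$ --- the desired contradiction. (A variant of the same trick, with $A$ the closure of a small open set whose closure misses $\overline{V_i}$, shows each $V_i$ must in fact be dense in $E$; this is the unproved ``fact'' the paper itself invokes when asserting that every nonempty open subset of $V_i$ has infinite-dimensional span, so on this preliminary point you are being more careful than the paper, just with a faulty tool.) With this one step repaired, your proof coincides with the paper's.
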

\begin{proof}
Using a countable base of $V_i$ and fact that each nonempty open subset $U$ of $V_i$ has infinite-dimensional $\lin U$,
one can easily inductively construct a countable dense subset $D$ of $V_i$ which is linearly independent. By Lemma \ref{dense_notgdelta}
it is enough to show that for any copy $K$ of the Cantor set in $V_i$, $K\cap D$ is not dense in $K$. Suppose that $K\cap D$ is dense in $K$.
Then, for any nonempty relatively open subset $U$ of $K$, $U\cap D$ is infinite, so $\lin U$ is infinite-dimensional.
Hence, by Lemma \ref{uncountable_lin}, $K$ contains an uncountable linearly independent set. Now, by Lemma \ref{lem_Pol}, the latter contains a linearly
independent Cantor set $C$. Then $C$ is disjoint from $V_j$, for $j\ne i$, a contradiction.
\end{proof}

\begin{question}
Does there exist (in ZFC) a noncomplete CDH pre-Hilbert space?
\end{question}

\appendix
\section{$\mathbb{R}^{\omega_1}$ is not a hereditarily Baire space}

The aim of this appendix is to give a proof of a recent (unpublished) result by Plebanek that we quoted in Section \ref{cpx}. This short and elegant proof is due to Jan van Mill, whom we thank for allowing us to include it in this paper.

\begin{thm}[G.\ Plebanek]
The product $\mathbb{R}^{\omega_1}$ is not a hereditarily Baire space.
\end{thm}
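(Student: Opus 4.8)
The plan is to reduce the statement to the construction of a single closed subspace of $\mathbb{R}^{\omega_1}$ that fails the Baire property. The cleanest target is a \emph{countable crowded} closed subspace $D$: for any nonempty countable crowded $T_1$-space, each singleton is closed and, by crowdedness, nowhere dense, so $D=\bigcup_{n}\{d_n\}$ exhibits $D$ as meager in itself, whence $D$ is not a Baire space. Thus it suffices to embed some countable crowded space onto a \emph{closed} subset of $\mathbb{R}^{\omega_1}$. Note that genuinely uncountably many coordinates must be involved: a closed subspace of $\mathbb{R}^{\omega}$ is Polish, hence hereditarily Baire, so no closed copy of a meager space can sit inside a countable subproduct.

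First I would isolate the mechanism forcing closedness of a diagonal embedding $e=(f_\beta)_{\beta<\omega_1}\colon D\to\mathbb{R}^{\omega_1}$ with continuous coordinates $f_\beta$. Writing the closure coordinatewise, $e(D)$ is closed exactly when every ``would-be'' limit is blocked: if $x$ lies in the product, every basic neighborhood of $x$ meets $e(D)$, and $x$ is not of the form $e(d)$, then some coordinate $f_\beta$ must diverge along every net approaching $x$. The toy model is $D=\mathbb{Q}$, with the identity on one coordinate and blow-up functions $g_t(q)=1/(q-t)$ on others: the coordinate $g_t$ kills any net whose first coordinate tends to the irrational $t$. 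Each such $g_t$ disposes of a single irrational limit, which already signals where the difficulty lies.

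The main obstacle is to carry out this blocking with only $\omega_1$ coordinates. Here the metrizable route is genuinely obstructed by a cardinal characteristic: a continuous $f\colon\mathbb{Q}\to\mathbb{R}$ can tend to $+\infty$ only along a set of irrationals whose closure misses $\mathbb{Q}$ and is therefore nowhere dense, so covering the comeager set of all potential irrational limits by such blow-up sets would require at least $\mathrm{cov}(\mathcal{M})$ coordinates, and $\mathrm{cov}(\mathcal{M})$ (like $\mathfrak{d}$) may consistently exceed $\omega_1$. Consequently the embedded space cannot be taken metrizable, and the construction must exploit what truly distinguishes $\omega_1$ from $\omega$, namely its \emph{regularity}: every countable set of coordinates is bounded below $\omega_1$. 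The plan is therefore to build, by a transfinite recursion of length $\omega_1$, a non-metrizable countable crowded $D$ whose convergent nets are each ``captured'' on a bounded block $[0,\gamma)$ of coordinates, leaving cofinally many coordinates free to diagonalize against, and thereby exclude, every external limit point. Verifying that this recursion really closes $D$ up, i.e.\ that no stray limit survives, is the crux; granting it, the reduction of the first paragraph immediately yields that $\mathbb{R}^{\omega_1}$ is not hereditarily Baire, and hence neither is $\mathbb{R}^{I}$ for any uncountable $I$, since $\mathbb{R}^{\omega_1}$ sits in it as a closed subspace.
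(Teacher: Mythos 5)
Your first paragraph's reduction is sound (a nonempty closed subspace that is meager in itself cannot be Baire), and your diagnosis that the construction must exploit the regularity of $\omega_1$ rather than metrizable machinery is also correct and matches the real proof in spirit. But there is a genuine gap: the witness is never constructed. The whole content of the theorem is the existence of a closed non-Baire subspace of $\mathbb{R}^{\omega_1}$, and your third paragraph replaces that existence proof with a plan --- ``build by transfinite recursion a countable crowded $D$ whose convergent nets are captured on bounded blocks of coordinates'' --- whose decisive step you yourself flag as unverified (``Verifying that this recursion really closes $D$ up \dots is the crux; granting it \dots''). Nothing is specified: what the points of $D$ are, what the coordinate functions are, how the diagonalization against external limit points is to be run, or even what ``captured on a bounded block'' means precisely for nets in an uncountable product. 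As written, this is an outline conditional on its hardest step, not a proof.

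Moreover, the target you chose --- a \emph{countable} crowded closed subspace --- is strictly harder than the theorem requires, and possibly much harder: as you yourself observe, a closed copy of $\mathbb{Q}$ is consistently unavailable with only $\omega_1$ coordinates, and whether some non-metrizable countable crowded space embeds as a closed subspace of $\mathbb{R}^{\omega_1}$ in ZFC is itself delicate (closed embeddability into powers of $\mathbb{R}$ is governed by realcompactness, and the canonical closed embedding of a countable space uses $\mathfrak{c}$ many coordinates, which is $\omega_1$ only under CH-like hypotheses). Meagerness in itself does not need countability, and the paper's proof (due to van Mill) produces an uncountable witness in one stroke: inside $\omega^{\omega_1}\subseteq\mathbb{R}^{\omega_1}$ (legitimate since $\omega$ is closed in $\mathbb{R}$), take $A=\{f\in\omega^{\omega_1}: f\ \text{nondecreasing}\}$, which is evidently closed. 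Regularity of $\omega_1$ forces every $f\in A$ to be bounded: an unbounded $f$ would already be unbounded on some countable initial segment $[0,\alpha)$, and then the finite value $f(\alpha)$ would violate monotonicity. Hence $A=\bigcup_n A_n$ with $A_n=\{f\in A: f\le n\}$ closed, and each $A_n$ is nowhere dense in $A$ because a basic open set constrains only a finite $F\subset\omega_1$, so any $f\in A_n$ in it can be reset to the value $n+1$ beyond $\max F$ while remaining nondecreasing. This realizes exactly the mechanism you were groping for --- monotonicity ``captures'' behavior on initial segments, leaving the tail free for a bump --- with no recursion and no limit-point bookkeeping; it is the step your proposal is missing.
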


\begin{proof}[Proof (van Mill)]
Since the space of natural numbers $\omega$ is a closed subspace of the real line $\mathbb{R}$ it is enough to prove that he product $\omega^{\omega_1}$ contains a closed subset $A$ which is not a Baire space. Let $A = \{f:\omega_1\to\omega: f \text{ is not decreasing}\}$. Since $f\in A$ if and only if, for all $\alpha < \beta <\omega_1$ we have $f(\alpha)\le f(\beta)$, the subspace $A$ is clearly closed. If $f\in \omega^{\omega_1}$ is unbounded, then it is unbounded on some countable interval $[0,\alpha)$, so by looking at the value $f(\alpha)$ we can examine that $f\notin A$. Hence we can write $A$ as $\bigcup_{n}A_n$, where $A_n = \{f\in A: f\le n\}$. Obviously, each $A_n$ is a closed subset of $A$. It remains to observe that it has an empty interior. Indeed, let $V$ be a nonempty open set in $\omega^{\omega_1}$ intersecting $A_n$. Without loss of generality we can assume that $V$ is a basic open set, i.e. consists of functions which have a fixed restriction to some finite set $F\subset\omega_1$. Let $\alpha = \max F$. Take any function $f\in V\cap A_n$, and define $g:\omega_1\to\omega$ in the following way: $g$ agrees with $f$ on $[0,\alpha]$ and takes value $n+1$ on $(\alpha,\omega_1)$. One can easily verify that $g\in (V\cap A)\setminus A_n$, witnessing the fact that $V\not\subset A_n$.
\end{proof}

\subsection*{Acknowledgements}
We would like to thank R.\ Hern\'andez-Guti\'errez, S.V.\ Med\-vedev, J.\ van Mill and A.V.\ Osipov for several valuable comments.

\end{document}